\documentclass[11pt]{amsart}

\usepackage{amssymb}
\usepackage{amsmath}
\usepackage{amsfonts}
\usepackage{graphicx}
\usepackage{subfigure}
\usepackage{amsthm}
\usepackage{enumerate}
\usepackage[mathscr]{eucal}
\usepackage{mathrsfs}
\usepackage{verbatim}
\usepackage{yhmath}
\usepackage{epstopdf}
\usepackage{color}
\usepackage[colorlinks,linkcolor=red,anchorcolor=green,citecolor=blue]{hyperref}

\makeatletter
\@namedef{subjclassname@2020}{%
  \textup{2020} Mathematics Subject Classification}
\makeatother

\numberwithin{equation}{section}
\numberwithin{figure}{section}

\theoremstyle{plain}
\newtheorem{theorem}{Theorem}[section]
\newtheorem{lemma}[theorem]{Lemma}
\newtheorem{proposition}[theorem]{Proposition}
\newtheorem{corollary}[theorem]{Corollary}

\theoremstyle{definition}
\newtheorem{defn}[theorem]{Definition}
\newtheorem{defnprop}[theorem]{Definitions and Basic Properties}

\newtheorem{example}[theorem]{Example}

\theoremstyle{plain}

\theoremstyle{remark}
\newtheorem*{remark}{Remark}

\allowdisplaybreaks[4]

\begin{document}
\date{}

\title[A multidimensional Szemer\'{e}di theorem]{A multidimensional Szemer\'{e}di theorem in integers}

\author{Jingwei Guo}
\address{School of Mathematical Sciences\\
	University of Science and Technology of China\\
	Hefei, 230026\\ P.R. China}
\email{jwguo@ustc.edu.cn}

\author{Changxing Miao}
\address{Institute of Applied Physics \& Computational Mathematics\\
	Beijing, 100088\\P.R. China}
\email{miao\_changxing@iapcm.ac.cn}

\author{Guoqing Zhan}
\address{School of Mathematical Sciences\\
	University of Science and Technology of China\\
	Hefei, 230026\\ P.R. China}
\email{zhanguoqing@mail.ustc.edu.cn}

\thanks{}

\subjclass[2020]{11B30}

\keywords{multidimensional Szemer\'{e}di theorem, degree-lowering, conditional expectation, higher order Fourier analysis}

\begin{abstract}
For any integer $n \geq 2$, let $(m_{1},\ldots,m_{n})$ be a strictly increasing $n$-tuple of positive integers. We show that any subset $A\subset [N]^n$ of density at least $(\log N)^{-c}$ contains a nontrivial configuration of the form
\begin{equation*}
\boldsymbol{x},\boldsymbol{x}+r^{m_{1}}\boldsymbol{e_{1}},\ldots,\boldsymbol{x}+r^{m_{n}}\boldsymbol{e_{n}},
\end{equation*}
where $c=c(n,m_{1},\ldots,m_{n} )$ is a  positive constant. This quantitative multidimensional Szemer\'{e}di theorem  extends a recent two-dimensional result of Peluse, Prendiville, and Shao concerning the configuration of the form $(x,y),(x+r,y),\left(x,y+r^{2}\right)$.  The theorem is obtained as a consequence of an effective ``popular'' version.
\end{abstract}

\maketitle

\tableofcontents

\section{Introduction} \label{intro}

Let $[N]$ denote the integer set $\{1, \dots, N\}$. In 1954, Roth \cite{R54} proved that if  $A\subset [N]$ contains no nontrivial three-term arithmetic progressions, then
\begin{equation*}
|A|=O\left(N \left(\log\log N\right)^{-1}\right).
\end{equation*}
This theorem of Roth initiated a major line of research, with important improvements made by  Heath-Brown, Szemer\'{e}di, Bourgain, Sanders, Bloom, Schoen, and Bloom--Sisask. A recent breakthrough in 2023 by Kelley and Meka \cite{KM23} improved the bound to
\begin{equation*}
|A|\leq N \exp\left(-c(\log N)^{1/12} \right)
\end{equation*}
for some absolute constant $c>0$. The exponent $1/12$ was later improved to $1/9$ by Bloom and Sisask \cite{BS23}. For a historical account on this problem as well as further details, we refer the interested reader to \cite{KM23} and the references therein.

A central direction in additive combinatorics seeks to extend Roth's result by proving upper bounds for sets that avoid $k$-term arithmetic progressions (for any fixed $k\geq 4$). This line of inquiry was established by two foundational works: Szemer\'{e}di's seminal theorem \cite{S75}, which established the bound $|A|=o(N)$ for such sets, and Gowers's subsequent work \cite{G98, G01}, which introduced a deep analytical framework that revolutionized the approach to these and related problems and yielded the quantitative improvement
\begin{equation*}
|A|=O\left( N(\log\log N)^{-c_k}\right)
\end{equation*}
for some constant $c_k>0$. A further improvement was then achieved, in the case $k=4$, by Green and Tao \cite{GT09, GT17}, who showed that
\begin{equation*}
	|A|=O\left( N(\log N)^{-c} \right)
\end{equation*}
for some constant $c>0$. In 2024, Leng, Sah, and Sawhney \cite{LSS24-1, LSS24-2} achieved a breakthrough, obtaining for all $k \geq 5$ the improved bound
\begin{equation*}
	|A|=O\left( N\exp\left(-(\log \log N)^{c_k}\right) \right)
\end{equation*}
with $c_k \in (0,1)$. A key factor behind this advance was the introduction of quantitative refinements to the theory of nilsequences, which demonstrates its power in this area of study.

The classical Szemer\'{e}di theorem, as well as its multidimensional generalization by Furstenberg and Katznelson \cite{FK78}, deals with linear configurations such as arithmetic progressions. Many problems, however, involve nonlinear patterns; for instance, progressions whose increments are given by polynomials. One famous result in this direction is the polynomial Szemer\'{e}di theorem of Bergelson and Leibman \cite{BL96}. It states that for any integer polynomials $P_1, \ldots, P_m\in \mathbb{Z}[y]$  with zero constant term, any subset of $[N]$ that contains no progression of the form
\begin{equation*}
	x, x+P_1(y), \ldots, x+P_m(y) \quad \textrm{with $y\in\mathbb{Z}\setminus \{0\}$}
\end{equation*}
must have size $o(N)$. In fact, results in Bergelson and Leibman \cite{BL96} also include more general higher‑dimensional generalizations. A major quantitative challenge in additive combinatorics is to establish quantitatively effective versions of theorems of Bergelson and Leibman, especially when the polynomials involved have different degrees.  Below we focus on (mainly nonlinear) configurations of length at least three. For results concerning linear configurations or configurations of length two, we refer the reader to the references listed in \cite{PP24, PPS24}.

In this direction, there has been significant progress in recent years. Prendiville \cite{Pre17} considered homogeneous polynomials of the same degree  and proved that if $A\subset [N]$ contains no nontrivial progression of the form
\begin{equation*}
	x, x+c_1 y^k, \ldots, x+c_m y^k
\end{equation*}
for $c_1, \ldots, c_m\in\mathbb{Z}$, then
\begin{equation*}
	|A|=O\left(N(\log\log N)^{-c_{m,k}} \right)
\end{equation*}
for some constant  $c_{m,k}>0$.  In a breakthrough result, Peluse and Prendiville \cite{PP24} obtained the first bound for a progression involving polynomials of distinct degrees. They showed that if $A\subset [N]$ contains no nontrivial progression of the form
\begin{equation*}
	x, x+y, x+y^2,
\end{equation*}
then
\begin{equation*}
|A|=O\left(N(\log\log N)^{-c} \right)
\end{equation*}
for some constant  $c>0$. Subsequently, in \cite{PP22}, they improved this bound to
\begin{equation*}
|A|=O\left(N(\log{N})^{-c}\right)
\end{equation*}
for some constant  $c>0$. The case of general polynomial progressions where the polynomials have distinct degrees has been studied in Peluse \cite{Pel20} and later in Shao and Wang \cite{SW25}.

In the two‑dimensional setting,  Peluse, Prendiville, and Shao \cite{PPS24} proved that if $A\subset [N]^2$ contains no nontrivial triple of the form
\begin{equation*}
(x, y), (x+d,y), (x, y+d^2),
\end{equation*}
then
\begin{equation*}
|A|=O\left(N^2(\log N)^{-c}\right)
\end{equation*}
for some constant $c>0$. Their work actually provides an effective ``popular'' version of this result and a new proof of the  aforementioned results of Peluse and Prendiville \cite{PP24, PP22}. Separately, Kravitz, Kuca, and Leng \cite{KKL24 a} showed that if $A\subset [N]^2$ does not contain any configurations of the form
\begin{equation*}
	(x, y), (x+P(d),y), (x, y+P(d)),
\end{equation*}
where $P\in \mathbb{Z}[d]$ has an integer root of multiplicity one, then
\begin{equation*}
	|A|=O_P\left( N^2(\log\log\log N)^{-c} \right)
\end{equation*}
for some constant $c>0$. To the best of our knowledge, these two results are the only known effective multidimensional polynomial Szemer\'{e}di theorems over the integers that involve a nonlinear polynomial. Related analogs exist elsewhere: in finite fields both for the  configuration $(x,y)$, $(x+P_1(d),y)$, $(x, y+P_2(d))$ (see Han, Lacey, and Yang \cite{HLY21}) and for its multidimensional polynomial generalizations (see Kuca \cite{Kuc24,Kuc24-b}), and in the Euclidean setting $\mathbb{R}^2$  both for the pattern $(x,y)$, $(x+t,y)$, $(x, y+t^2)$ (see Christ, Durcik, and Roos \cite{CDR21}) and for its two-polynomial generalizations (see Chen with the first- and second-named authors, respectively \cite{CG24, CM24}).

The main result of this paper is a multidimensional Szemer\'{e}di theorem in $\mathbb{N}^n$ with $n\geq 2$. It extends  a   two-dimensional result of Peluse, Prendiville, and Shao \cite{PPS24}.
Let $\boldsymbol{e}_{\boldsymbol{1}}$, $\boldsymbol{e}_{\boldsymbol{2}}$, \ldots,  $\boldsymbol{e}_{\boldsymbol{n}}$ denote the standard basis vectors of $\mathbb{R}^n$.

\begin{theorem} \label{main theorem}
For any integer $n \geq 2$, let $(m_{1},\ldots,m_{n})$ be  a strictly increasing $n$-tuple of positive integers,
and  $N_{1},\ldots,N_{n}\in \mathbb{N}$ satisfy
\begin{equation}\label{s1-2}
	N_{n}^{1/m_{n}}\leq N_{n-1}^{1/m_{n-1}}\leq\cdots\leq N_{1}^{1/m_{1}}.
\end{equation}
 If $N_{n}$ is sufficient large and $A\subset \prod_{1\leq i\leq n} \left[N_{i}\right]\subset \mathbb{N}^n$ contains no nontrivial configuration of the form
\begin{equation}\label{s1-1}
	\boldsymbol{x}, \boldsymbol{x}+r^{m_{1}} \boldsymbol{e}_{\mathbf{1}}, \ldots, \boldsymbol{x}+r^{m_{n}} \boldsymbol{e}_{\boldsymbol{n}},
\end{equation}	
then
\begin{equation*}
	|A|\leq (\log N_{n})^{-c} \prod_{1\leq i\leq n}N_{i}
\end{equation*}
for some constant $c=c\left(n,m_{1},\ldots,m_{n}\right)>0$.

\end{theorem}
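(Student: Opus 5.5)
Following the strategy of Peluse--Prendiville--Shao, I will deduce Theorem~\ref{main theorem} from an effective popular version. Write $M = N_n^{1/m_n}$ for the range of the parameter $r$, so that $r^{m_i}\le N_i$ for all $i$ by \eqref{s1-2}. Let $\delta = |A|/\prod_i N_i$. The target is a counting lower bound
\begin{equation*}
\sum_{\boldsymbol{x}}\sum_{r\le M} 1_A(\boldsymbol{x})\prod_{i=1}^n 1_A(\boldsymbol{x}+r^{m_i}\boldsymbol{e_i}) \gg \delta^{O(1)} M\prod_i N_i
\end{equation*}
whenever $\delta \ge (\log N_n)^{-c}$. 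The trivial configurations ($r=0$) contribute only $\prod_i N_i$, which is negligible, so this bound suffices. I plan to obtain it by a density increment argument. If the count is small, then $A$ has density at least $\delta+\delta^{O(1)}$ on a structured subset, namely a product of long arithmetic progressions, or more precisely a level set of a product of partitions adapted to each coordinate. After rescaling, this subset is again a box satisfying a condition of type \eqref{s1-2}. Each step loses only polynomially in the box sizes, and the density can be incremented only $\delta^{-O(1)}$ times. The logarithmic bound then follows provided each increment shrinks $N_n$ by at most a factor $\exp(\delta^{-O(1)})$, i.e.\ the side lengths become $N_i\delta^{O(1)}$ or $N_i^{1-o(1)}$ with polynomial losses.

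The heart of the argument is a generalized von Neumann / degree-lowering step, carried out by induction on the number of coordinates. First, I view the counting operator as multilinear in $f_0, f_1,\dots,f_n$ with $f_0 = 1_A$ and $f_i = 1_A$. Applying PET induction (Cauchy--Schwarz in $\boldsymbol{x}$ and $r$ repeatedly), I will show that the last function $f_n$, the one shifted by the highest degree $r^{m_n}$, controls the count via a Gowers-type box norm in the $\boldsymbol{e_n}$ direction of some degree $s=s(n,m_1,\dots,m_n)$. Second, I will apply degree-lowering in the Peluse style. A large $U^s$-norm of the dual function along $\boldsymbol{e_n}$ gives correlation with a degree-$(s-1)$ phase. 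Because the other coordinates' shifts do not involve $\boldsymbol{e_n}$, one can exploit the polynomial $r^{m_n}$ and major-arc/Weyl estimates to descend to $s=1$. The outcome is that the dual function correlates with a function of $\boldsymbol{x}$ that is ``locally constant'' along $\boldsymbol{e_n}$ on progressions of step $q\ll\delta^{-O(1)}$ and length $N_n^{1-o(1)}$.

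Third, I take conditional expectations. This is the step where I expect to use the conditional-expectation trick from the abstract. If the count is small, replacing $f_n$ by its conditional expectation onto the $\sigma$-algebra generated by long progressions in the $\boldsymbol{e_n}$ direction changes the count little. After that replacement, the configuration effectively loses the $n$-th point: one is left counting $\boldsymbol{x},\boldsymbol{x}+r^{m_1}\boldsymbol{e_1},\dots,\boldsymbol{x}+r^{m_{n-1}}\boldsymbol{e_{n-1}}$, weighted by a function that is smooth in $x_n$. Then I apply the induction hypothesis, namely the $(n-1)$-dimensional popular version, fiberwise in $x_n$. This either gives the count or a density increment. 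The base case is the $1$- or $2$-dimensional result of \cite{PPS24}, or directly Fourier analysis for the pair $x, x+r^{m_1}$.

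I expect the main obstacle to be the degree-lowering step in the presence of several lower-degree directions. The dual function now depends on $n-1$ other coordinates through polynomials of distinct degrees. So the PET and degree-lowering argument must produce bounds that are polynomial in $\delta$ and uniform over the fibers, with structured pieces whose progression lengths remain compatible with \eqref{s1-2} after the increment. My first attempt will be to order coordinates by degree: I will run the lowering on the highest-degree direction $\boldsymbol{e_n}$ while treating the other coordinates as parameters, and use the strict monotonicity $m_1<\dots<m_n$ to ensure that the relevant polynomial differences have nonvanishing leading coefficients.
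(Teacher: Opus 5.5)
\textbf{There is a genuine gap.} The inverse theorem itself is not the problem, since the paper takes it from \cite{KMPWW24}. The gap is in how you pass from the inverse theorem to the density bound.

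\emph{Why the density increment does not close.} For $f_i$ ($1\le i\le n$; nothing is said about $f_0$), the inverse theorem only gives large energy of $\mathbf{E}\bigl((f_i)_{\widehat{\boldsymbol{x}}^{i}}\mid\mathcal{B}_{(q^{m_i}L^{m_i},q^{m_i})}\bigr)$. This function is locally constant along $x_i$ but depends \emph{arbitrarily} on $\widehat{\boldsymbol{x}}^{i}$. Its level sets have the form $\bigcup_{\widehat{\boldsymbol{x}}^{i}}\{\widehat{\boldsymbol{x}}^{i}\}\times P_{\widehat{\boldsymbol{x}}^{i}}$, where the progression $P_{\widehat{\boldsymbol{x}}^{i}}$ varies from fibre to fibre. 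Consequently:
\begin{enumerate}[(i)]
\item There is no box on which $A$ gains density, so you cannot rescale to a box satisfying \eqref{s1-2} and iterate.
\item Restricting to such a set breaks the next round, because the points $\boldsymbol{x}+r^{m_j}\boldsymbol{e_j}$, $j\neq i$, lie in other fibres.
\end{enumerate}
This is the corners-type obstruction, and your plan has no mechanism for turning fibrewise structure into product structure.

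\emph{Your third step does not repair this.} Replacing $f_n$ by its conditional expectation is controlled only on average over $r$, and only for $r$ in a progression $q\cdot[\ll\delta L]$. So you would need an averaged-over-$r$ lower bound for a weighted $(n-1)$-dimensional count. An $(n-1)$-dimensional popular theorem applied fibre by fibre gives a different $r$ for each $x_n$, and that cannot be played off against an averaged error. Also, \cite{PPS24} treats only $(m_1,m_2)=(1,2)$, so it is not a base case for general exponents.

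\emph{What the paper does instead.} It avoids density increments entirely. It proves the popular statement, Theorem \ref{popular difference,n-D}. Theorem \ref{main theorem} then follows by taking $\delta\asymp\mu_A^{n+1}$: the hypothesis $\mu_A\gg(\log N_n)^{-c}$ guarantees $N_n\ge\exp(\delta^{-C})$, and the popular count is then positive for some $r\ge1$. The popular statement is proved in three steps.
\begin{enumerate}[(i)]
\item Homogeneity of $r^{m_i}$ transfers the inverse theorem to $\Lambda_{q,M,\boldsymbol{N};\boldsymbol{m}}$, where the effective difference parameter $qr$ runs over multiples of $q$ (Theorem \ref{Lambda_{q,M}-inverse}).
\item An energy increment run \emph{simultaneously in all $n$ directions} (Proposition \ref{energy-increment}) refines the fibrewise partitions until $\Lambda(f_0,f_1,\dots,f_n)\approx\Lambda(f_0,F_1,\dots,F_n)$. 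This takes at most $\delta^{-C}$ steps, each shrinking $L$ by a factor $\delta^{C}$, which is where $\exp(\delta^{-C})$ comes from.
\item For $r\ll\delta L$ the shifts are essentially invisible to the $F_i$. What remains is $\mathbf{E}_{\boldsymbol{x}}f(\boldsymbol{x})\prod_i\mathbf{E}\bigl(f_{\widehat{\boldsymbol{x}}^{i}}\mid\mathcal{B}_{(q^{m_i}L^{m_i},q^{m_i})}\bigr)(x_i)$, which is $\gtrsim(\mathbf{E}f)^{n+1}$ by the product-set H\"older inequality (Lemma \ref{roy's inequality,corollary}).
\end{enumerate}
That last inequality is exactly what absorbs the arbitrary fibre dependence that blocks your increment.

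\emph{A smaller point on your degree-lowering sketch.} On $\mathbb{Z}$, a large $U^s$ norm does not give correlation with a degree-$(s-1)$ phase for $s\ge3$. Peluse-style lowering instead differences $s-2$ times and applies the $U^2$ inverse theorem. The resulting linear phase depends on $\widehat{\boldsymbol{x}}^{n}$, so the other coordinates cannot be treated as passive parameters. This is why the paper needs the $(n-1)$-dimensional inverse theorem for operators twisted by phases $e(\alpha_j(\boldsymbol{x})r^{m_{n+j}})$.
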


\begin{remark}
Two direct consequences of the theorem, obtained by choosing specific $N_i$, are worth stating separately.
\begin{enumerate}[(i)]
\item  By taking $N=N_1=N_2=\cdots=N_n$, we obtain:  if $N$ is sufficient large and $A\subset [N]^n$ contains no nontrivial configuration \eqref{s1-1} with a strictly increasing $n$-tuple $\left(m_{1},\ldots,m_{n}\right)$ of positive integers, then
\begin{equation*}
|A|\leq N^n(\log N)^{-c}
\end{equation*}
for some constant $c>0$.

\item  By taking $N_i=N^{m_i}$, $1\leq i\leq n$, we obtain:  if  $N$ is sufficient large and $A\subset \prod_{1\leq i\leq n} \left[N^{m_{i}}\right]$ contains no nontrivial configuration \eqref{s1-1} with a strictly increasing $n$-tuple $\left(m_{1},\ldots,m_{n}\right)$  of positive integers, then
\begin{equation*}
	|A|\leq N^{m_1+\cdots+m_n}(\log N)^{-c}
\end{equation*}
for some constant $c>0$.
\end{enumerate}
\end{remark}

As in \cite{PPS24}, we derive Theorem \ref{main theorem} from the following stronger result concerning the existence of a ``popular'' difference.

\begin{theorem}\label{popular difference,n-D}
Let $\delta\in(0,1/10)$, $(m_{1},\ldots,m_{n})$ be a strictly increasing $n$-tuple of positive integers, and $N_{1},\ldots,N_{n}\in\mathbb{N}$ satisfy \eqref{s1-2}.	If $N_{n}\geq \exp (\delta^{-C})$ for some constant  $C=C(n,m_{1},\ldots,m_{n})>0$, then for any set $A\subset \prod_{1\leq i\leq n} \left[N_{i}\right]\subset \mathbb{N}^n$, there exists $r\geq 1$ such that 	
	\begin{equation*}
		\left|\left\{\boldsymbol{x} \in A: \boldsymbol{x}+r^{m_{1}}\boldsymbol{e_{1}}, \ldots, \boldsymbol{x}+r^{m_{n}} \boldsymbol{e}_{\boldsymbol{n}} \in A\right\}\right| \gtrsim_{n} \left( \mu_{A}^{n+1}- \delta\right)\prod_{1\leq i\leq n}N_{i},
	\end{equation*}
	where $\mu_{A}:=|A|/\prod_{1\leq i\leq n}N_{i}$.
\end{theorem}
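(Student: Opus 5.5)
We follow the strategy of Peluse--Prendiville--Shao \cite{PPS24}: an energy increment argument driven by a structure theorem for the counting operator, in which the higher-degree directions are successively reduced to lower-degree ones by degree-lowering.

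\textbf{Setting up the count.} Let $M = N_n^{1/m_n}$ be the range of the parameter $r$; by \eqref{s1-2} every shift $r^{m_i}\boldsymbol{e}_i$ with $r \le \epsilon M$ then moves coordinate $i$ by at most $\epsilon^{m_i} N_i$. For functions $f_0, \ldots, f_n$ on $\mathbb{Z}^n$ define
\[
\Lambda(f_0,\ldots,f_n) = \mathbb{E}_{\boldsymbol{x}}\,\mathbb{E}_{r\le M}\, f_0(\boldsymbol{x}) \prod_{i=1}^n f_i\bigl(\boldsymbol{x}+r^{m_i}\boldsymbol{e}_i\bigr).
\]
It suffices to find one $r$ whose count is at least $(\mu_A^{n+1}-\delta)\prod_i N_i$. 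We compare $\Lambda(1_A,\ldots,1_A)$ with a ``structured'' model in which each $1_A$ is replaced by a conditional expectation $\mathbb{E}(1_A\mid\mathcal{B})$. Here $\mathcal{B}$ is a product $\sigma$-algebra. In each coordinate direction $i$ it is generated by a partition into progressions of common difference $q^{m_i}$ and length about $\eta N_i$, combined with level sets of a bounded number of polynomial phases (major-arc phases) in the other coordinates.

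\textbf{Key steps.}

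(1) Prove an inverse theorem by PET induction and repeated van der Corput/Cauchy--Schwarz. If $|\Lambda(f_0,\ldots,f_n)| \ge \delta$ with $|f_j|\le 1$, then for each $j$, $f_j$ correlates with a function that is an average of local ``$r^{m_j}$-periodic'' functions in direction $j$. The first bound is in terms of a Gowers-type box norm along directions $\{r^{m_i}\boldsymbol{e}_i\}$.

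(2) Apply degree-lowering, the heart of the argument, using that $m_1<\cdots<m_n$. We peel the directions off from the highest degree downwards. The dual function for $f_n$ involves shifts of size $r^{m_n}$ in coordinate $n$ only, while the other shifts have strictly smaller degree in $r$. Fourier analysis in the variable $r$ (Weyl's inequality / major--minor arc estimates) shows that the correlating phase must be a major-arc phase $e(a x_n/q)$ with $q \lesssim \delta^{-O(1)}$. This turns the box-norm control into control by $\|f_n - \mathbb{E}(f_n\mid \mathcal{B}_n)\|_{L^2}$, where $\mathcal{B}_n$ is a partition of coordinate $n$ into short progressions of difference $q$. We then induct downwards on the number of nonlinear directions.

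(3) Run an energy increment. If $\Lambda(1_A,\ldots)$ differs from $\Lambda$ evaluated at the conditional expectations, then the energy $\|\mathbb{E}(1_A\mid\mathcal{B})\|_2^2$ increases by $\delta^{O(1)}$ on refining $\mathcal{B}$. This can happen only $\delta^{-O(1)}$ times, so the final $q$ and scale stay $\le \exp(\delta^{-O(1)})$; this is where the hypothesis $N_n \ge \exp(\delta^{-C})$ enters.

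(4) Restrict $r$ to multiples of $Q$ (the lcm of the moduli) with $r \le \eta M$. On each product cell, the structured functions are then essentially invariant under all shifts $r^{m_i}\boldsymbol{e}_i$. The model count becomes
\[
\mathbb{E}_{\text{cells}}\, g^{\,n+1} \ge \bigl(\mathbb{E}\, g\bigr)^{n+1} = \mu_A^{n+1}
\]
by H\"older, where $g = \mathbb{E}(1_A\mid\mathcal{B})$. Pigeonholing over $r$ then gives the required popular difference, with error $\delta$ after accounting for boundary effects.

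\textbf{Main obstacle.} We expect step (2) to be the hardest. With $n$ directions, the degree-lowering must be iterated, and each step must preserve enough uniformity in the remaining directions. The risk is that control of $f_n$ obtained by Cauchy--Schwarz leaves dual functions in the other coordinates that are not obviously structured. Our first attempt is to treat $(x_1,\ldots,x_{n-1})$ as frozen parameters and apply the one-variable argument of \cite{PPS24} to the pair of directions $(n-1,n)$, using the hypothesis $m_{n-1}<m_n$. We would then induct, using as the inductive hypothesis a structure theorem for the $(n-1)$-dimensional configuration with a fixed weight function attached.
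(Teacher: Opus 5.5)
There is a genuine gap in steps (3)--(4). The structured model you compare against is the wrong one, and the energy-increment dichotomy you state for it is false.

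Your step (2) only gives \emph{directional} information. For $i\ge1$, $f_i$ correlates with a function that is structured along coordinate $i$ but depends arbitrarily on $\widehat{\boldsymbol{x}}^{i}$, and nothing comparable is available for $f_0$. That cannot drive an energy increment for a single bounded-complexity product $\sigma$-algebra $\mathcal{B}$, and in general $\Lambda(1_A,\ldots,1_A)$ is not close to $\Lambda(g,\ldots,g)$ with $g=\mathbf{E}(1_A\mid\mathcal{B})$. For example, take $n=2$ and $A=U\times V$ with $U\subset[N_1]$, $V\subset[N_2]$ random of density $1/2$. Then $\Lambda(1_A,1_A,1_A)\approx\mathbf{E}_{x,y,r}1_U(x)1_U(x+r^{m_1})1_V(y)1_V(y+r^{m_2})\approx 1/16=\mu_A^2$. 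But $g\approx 1/4$ for every product $\sigma$-algebra built from long progressions and boundedly many major-arc level sets, so your model gives about $1/64$. The discrepancy is a constant, yet no admissible refinement increases $\|g\|_2^2$.

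What works, and what the paper does in Proposition~\ref{energy-increment}, is to leave $f_0=1_A$ untouched. Each $f_i$, $i\ge1$, is replaced by the slice-wise conditional expectation $F_i(\boldsymbol{x})=\mathbf{E}((1_A)_{\widehat{\boldsymbol{x}}^{i}}\mid\mathcal{B}_{(q^{m_i}L^{m_i},q^{m_i})})(x_i)$, which averages only in coordinate $i$. The energies $\mathbf{E}_{\widehat{\boldsymbol{x}}^{i}}\|\mathbf{E}((1_A)_{\widehat{\boldsymbol{x}}^{i}}\mid\cdot)\|_2^2$ are tracked separately for each $i$. In the example, $F_1\approx\frac12 1_V(y)$ and $F_2\approx\frac12 1_U(x)$, so $\mathbf{E}\,1_A F_1F_2\approx 1/16$, as it should be.

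The model count is therefore $\mathbf{E}_{\boldsymbol{x}}1_A(\boldsymbol{x})\prod_i F_i(\boldsymbol{x})$, not $\mathbf{E}_{\text{cells}}\,g^{n+1}$, and H\"older gives nothing here. The missing ingredient is the corner-type inequality on product sets, Lemma~\ref{roy's inequality} (from Han--Lacey--Yang):
$\mathbf{E}_{\boldsymbol{x},\boldsymbol{x}'}f(\boldsymbol{x})\prod_j f(\boldsymbol{x}+(x_j'-x_j)\boldsymbol{e}_j)\gtrsim_n(\mathbf{E}f)^{n+1}$.
It is applied inside each product cell and followed by Jensen (Lemma~\ref{roy's inequality,corollary}). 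This is why the theorem only claims $\gtrsim_n$.

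There is a second gap. You cannot restrict $r$ to multiples of $Q$ with $r\le\eta M$ only at the end. An approximation for the full-range average says nothing about that sparse sub-average, and the full-range model is not shift-invariant. The comparison must be made at every stage for the restricted operator $\Lambda_{q,M,\boldsymbol{N};\boldsymbol{m}}$, which requires an inverse theorem for that operator. The paper obtains it (Theorem~\ref{Lambda_{q,M}-inverse}) from the inverse theorem for $\Lambda_{N;\boldsymbol{m}}$ by averaging over translates, using homogeneity $(qr)^{m_i}=q^{m_i}r^{m_i}$, and then removing the resulting translation.

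Relatedly, the cells in direction $i$ should have length about $(qL)^{m_i}$, with $L$ a small multiple of $N_n^{1/m_n}$, rather than $\eta N_i$. When $N_i^{1/m_i}>N_n^{1/m_n}$, cells of length $\eta N_i$ are too coarse to capture the structure the inverse theorem produces.
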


As an easy one-dimensional corollary of Theorem \ref{popular difference,n-D}, we have the following.

\begin{corollary}\label{popular difference,1-D}
	Let $\delta\in(0,1/10)$, $n\geq 2$, and $\left(m_{1},\ldots,m_{n}\right)$ be a strictly increasing $n$-tuple of positive integers. If $N\geq \exp (\delta^{-C})$ for some constant $C=C(n,m_{1},\ldots,m_{n})>0$, then for any set $A\subset [N]$, there exists $r\geq 1$ such that 	
	\begin{equation*}
		\left|\left\{x \in A: x+r^{m_{1}}, \ldots, x+r^{m_{n}}\in A\right\}\right| \gtrsim_{{n}} \left( \mu_{A}^{n+1}-\delta\right)N,
	\end{equation*}
	where $\mu_{A}:=|A|/N$.
\end{corollary}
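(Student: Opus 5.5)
Corollary \ref{popular difference,1-D} follows from Theorem \ref{popular difference,n-D} by a lifting argument: embed the one-dimensional problem in $n$ dimensions through the linear map $\phi:\mathbb{Z}^n\to\mathbb{Z}$, $\phi(\boldsymbol{x})=x_1+\cdots+x_n$. The key observation is that $\phi(\boldsymbol{x}+r^{m_i}\boldsymbol{e}_i)=\phi(\boldsymbol{x})+r^{m_i}$. So if $\boldsymbol{x},\boldsymbol{x}+r^{m_1}\boldsymbol{e_1},\ldots,\boldsymbol{x}+r^{m_n}\boldsymbol{e}_{\boldsymbol{n}}$ all lie in $\phi^{-1}(A)$, then $\phi(\boldsymbol{x}),\phi(\boldsymbol{x})+r^{m_1},\ldots,\phi(\boldsymbol{x})+r^{m_n}$ all lie in $A$.

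Set $M=\lfloor N/n\rfloor$, take $N_1=\cdots=N_n=M$ (so that \eqref{s1-2} holds trivially), and define $\tilde A=\{\boldsymbol{x}\in[M]^n:\phi(\boldsymbol{x})\in A\}$. The density $\mu_{\tilde A}$ need not be comparable to $\mu_A$, because the fibres of $\phi$ over $[M]^n$ are concentrated near $nM/2$. To fix this, I average over translates. For $t\in\mathbb{Z}$ let $A_t=A-t$ and $\tilde A_t=\{\boldsymbol{x}\in[M]^n:\phi(\boldsymbol{x})+t\in A\}$, and let $t$ range over an interval of length about $N$. Each $y\in[N]$ then arises from $(\boldsymbol{x},t)$ in roughly $M^n$ ways, uniformly in $y$ up to boundary effects. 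These effects can be kept to a harmless constant fraction by letting $t$ range over $[-nM,N]$ and normalizing; this gives $\mathbb{E}_t\mu_{\tilde A_t}\gg_n \mu_A$. A cleaner route is to choose $M$ about $N/n$ and $t\in[N]$ and count directly:
\[
\sum_t|\tilde A_t|=\sum_{\boldsymbol{x}}|A\cap(\phi(\boldsymbol{x})+[N])|\ge M^n(|A|-nM).
\]
Taking $M=\lfloor\delta N/n\rfloor$ makes the loss $nM\le\delta N$.

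By Jensen's inequality (or Hölder), $\mathbb{E}_t\mu_{\tilde A_t}^{n+1}\ge(\mu_A-\delta)^{n+1}\ge \mu_A^{n+1}-(n+1)\delta$. Hence some $t$ satisfies $\mu_{\tilde A_t}^{n+1}\ge\mu_A^{n+1}-(n+1)\delta$. Apply Theorem \ref{popular difference,n-D} to $\tilde A_t$ with parameter $\delta$; this is legitimate since $M\ge \delta N/(2n)\ge\exp(\delta^{-C'})$ once $N\ge\exp(\delta^{-C})$ with $C$ large. It produces $r\ge1$ and at least $\gg_n(\mu_A^{n+1}-(n+2)\delta)M^n$ points $\boldsymbol{x}$ whose whole configuration lies in $\tilde A_t$.

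Pushing forward under $\boldsymbol{x}\mapsto\phi(\boldsymbol{x})+t$ gives points $x\in A$ with $x+r^{m_i}\in A$ for all $i$. Each fibre has size at most $M^{n-1}$, so the number of distinct such $x$ is at least $\gg_n(\mu_A^{n+1}-(n+2)\delta)M$. This is only $\delta N$-scale, not $N$-scale, because $M\approx\delta N$.

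This loss is the main obstacle: shrinking $M$ to control the boundary costs a factor $\delta$. I would avoid it by not shrinking at all. Take $M=\lfloor N/(2n)\rfloor$ and $t\in[N]$. The set $\{y:\text{fibre weight of } y \text{ under }(\boldsymbol{x},t)\text{ is }\ge cM^n\}$ is a subinterval of $[N]$ of length $\ge(1-O_n(1/1))N$... but a cleaner device is to run the averaging argument on $\mathbb{Z}/N'\mathbb{Z}$-free intervals: partition $[N]$ into $K=\lceil 1/\delta\rceil$ blocks of length $L=N/K$, apply the argument above inside each block with $M\approx\delta L/n$, and sum over blocks. Summing gives total count $\gg_n(\mu_A^{n+1}-O(\delta))\,\delta\, N$ per fixed $r$, but different blocks may select different $r$. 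So I would instead apply the popular theorem once to a single combined $n$-dimensional set. The cleanest way to do this is to take $N_1=N$ and $N_2=\cdots=N_n=\lfloor\delta N^{1/m_n}\rfloor$, which respects \eqref{s1-2}, and set $\tilde A=\{\boldsymbol{x}:\phi(\boldsymbol{x})\in A\}$. The coordinates $x_2,\ldots,x_n$ are then small perturbations, so the boundary loss is only $O(n\delta)N$ without any shrinking. Each fibre $\{\phi=y\}$ has size exactly $N_2\cdots N_n$ for $y$ away from the $O(\delta N)$ boundary. This gives $\mu_{\tilde A}\ge\mu_A-O(n\delta)$ directly, and the count then pushes forward with fibre size $N_2\cdots N_n$, giving the claimed $\gtrsim_n(\mu_A^{n+1}-\delta)N$ after replacing $\delta$ by $\delta/C_n$. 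The needed lower bound $N_n\ge\exp(\delta^{-C})$ follows from $N\ge\exp(\delta^{-C''})$.
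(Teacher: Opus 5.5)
Your final argument is correct and is essentially the paper's proof. You lift $A$ to $\{\boldsymbol{x}: x_1+\cdots+x_n\in A\}$ inside a box with one side of length $N$ and much shorter remaining sides, check \eqref{s1-2}, observe that the density loss is negligible, apply Theorem \ref{popular difference,n-D}, and divide by the fibre size, which is the paper's pigeonhole over the short coordinates. The only difference is cosmetic: the paper makes $x_n$ the long coordinate with $N_i=N^{m_i/m_n}$ (so \eqref{s1-2} holds with equality, after assuming $N^{1/m_n}\in\mathbb{N}$), whereas you make $x_1$ long with $N_2=\cdots=N_n=\lfloor\delta N^{1/m_n}\rfloor$, which also satisfies \eqref{s1-2} and avoids the integrality assumption. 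Your earlier exploratory attempts with all sides of size about $\delta N/n$ lose a factor $\delta$, as you noticed, and should simply be deleted.
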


Theorems \ref{main theorem} and \ref{popular difference,n-D}  generalize the two-dimensional results of Peluse, Prendiville, and Shao in \cite[Theorems 1.1 and 1.2]{PPS24} to a broader homogeneous setting for the multidimensional polynomial Szemer\'{e}di theorem.  Corollary \ref{popular difference,1-D} is a ``popular'' version of a one-dimensional polynomial Szemer\'{e}di theorem involving $n$ monomials of distinct degrees. It generalizes \cite[Theorem 1.3]{PPS24}. Compared with Shao and Wang’s result \cite[Theorem 1.2]{SW25}, it yields a somewhat better bound in the homogeneous setting.

As in earlier works \cite{PP24, Pel20, PP22, PPS24, SW25}, the overall strategy employs PET induction, concatenation, and a degree‑lowering step to establish an inverse theorem, followed by an energy increment (or density increment) argument that yields the final bound. The core inverse theorem we need follows from a more general result of Kosz, Mirek, Peluse, Wan, and Wright in \cite[Theorem 4.14]{KMPWW24}. Their inverse theorem is established using several technical results on PET induction and concatenation from Kravitz, Kuca, and Leng \cite{KKL24 b}, together with a novel degree‑lowering argument. The main ingredients of this paper are twofold. First, we show how to derive Theorem \ref{popular difference,n-D} from the corresponding inverse theorem; second, we present a modified degree‑lowering argument in Appendix \ref{appendiA}, which differs slightly from that in \cite[Section 4]{KMPWW24}.

Regarding the first ingredient, our derivation follows closely the argument in \cite[Sections 5--7]{PPS24} with the main difference being that we work in the setting of conditional expectation rather than the setting of Fej\'{e}r kernels as in \cite{PPS24}. More specifically, different from \cite[Theorem 4.14]{KMPWW24}, we state the inverse theorem (Theorem \ref{Lambda_{N}-inverse}) in the language of conditional expectation. Our statement is somewhat weaker, but as will be shown in Section \ref{section:strategy}, it is sufficient  for proving Theorem \ref{popular difference,n-D}. The advantage of this adjustment becomes evident when deriving Theorem \ref{popular difference,n-D} from the inverse theorem: it allows us to avoid introducing an extra weight function in the counting operator and allows the energy increment argument to proceed in
a more natural manner, similar to that in \cite{GT09}, without additional computations involving Fej\'{e}r kernels.

Concerning the second ingredient, we make a few further remarks. Peluse \cite{Pel19} introduced the method of degree‑lowering, which has now become a standard tool for obtaining inverse theorems in related settings. Typical degree-lowering arguments in the integer setting  (see for instance \cite[Lemma 6.5]{PP24} and \cite[Lemma 5.5]{PPS24}) aim to lower the Gowers norm degree of structured functions (so‑called ``dual functions''), thereby avoiding the use of the complicated theory of nilsequences.
While of independent interest, this approach has its own complexity in showing the ``low rank'' property of the emerging phase functions, especially in the multidimensional setting. The treatment in our paper aims to minimize this complexity. Concretely, we design a self-improving process in Proposition \ref{degree-lowering} and reduce the inverse theorem (Theorem \ref{Lambda_{N}-inverse with phase}) to the simplest case $n=1$ via simpler ``dual-difference interchange'' results (see Lemma \ref{dual-difference interchange}). Consequently, we only need to handle the complexity of proving the ``low rank'' property of the emerging phase functions in the proof of the simplest case $n=1$ of Theorem \ref{Lambda_{N}-inverse with phase}. Such a modification enables us to avoid using technical lemmas such as \cite[Lemma 4.51]{KMPWW24}.

\textit{This paper is organized as follows.} In Section \ref{section:conventions and general notation}, we set out the conventions and general notation used throughout the paper. In Section \ref{section:strategy} we show how to use the inverse theorem (Theorem \ref{Lambda_{N}-inverse}) to derive Theorem \ref{popular difference,n-D}. In Appendix \ref{appendiA}, we present our modified degree-lowering argument, which, combined with standard PET arguments and concatenation results, actually serves as a complete proof of Theorem \ref{Lambda_{N}-inverse}. In Appendix \ref{appendiB}, we describe a possible way to extend our results to general multidimensional polynomial progressions.


\section{Notation}\label{section:conventions and general notation}

The symbols $\mathbb{N}$, $\mathbb{Z}$, $\mathbb{C}$, and $\mathbb{T}$ denote the sets of positive integers, integers, complex numbers, and the 1‑torus $\mathbb{R}/\mathbb{Z}$, respectively. Unless otherwise stated, non‑bold letters denote elements of \(\mathbb{Z}\), and bold letters denote elements of \(\mathbb{Z}^n\) for some appropriate \(n\), with the exception that the letters \(c, C, \delta\) are reserved for positive real numbers. Variables are usually described only by their ranges. For example:  \(1 \leq H \leq 8.5\) means \(H\) is a positive integer not exceeding \(8.5\), and  \(\delta \in (0, 1/10)\) means \(\delta\) is a real number in the interval \((0, 1/10)\). For \(N \geq 1\) we employ the abbreviations \([N] := \{1, 2, \dots, N\}\) and  \([\pm N] := \{-N, -N+1, \dots, N-1, N\}\).

Under a summation sign \(\Sigma\), a variable written without an explicit range is summed over \(\mathbb{Z}\) or, for a vector‑valued variable, over \(\mathbb{Z}^n\)  for some appropriate \(n\). The symbol \(\mathbf{E}\) denotes either the ordinary expectation operator or the conditional expectation operator (see Definition \ref{conditional expectation}). In the former case, the variables to be averaged and their ranges are indicated below \(\mathbf{E}\).

Unimportant constants are either hidden using the standard asymptotic notation $O(\cdot)$, $\lesssim$, $\gtrsim$, $\asymp$, or simply written as \(c\) (for a ``small'' constant) or \(C\) (for a ``large'' constant), with subscripts indicating dependencies. Within proofs, different occurrences of \(C\) (or \(c\)) may denote different numbers, even on the same line.
In the statements of theorems, lemmas, and propositions, every occurrence of $C$ (or $c$) should be understood as the same constant (we may artificially adjust their values to be identical for the sake of conciseness).

The \(i\)-th component of a vector $\boldsymbol{x}$ is usually written \(x_i\), and the vector obtained from \(\boldsymbol{x}\) by deleting the \(i\)-th component is written \(\widehat{\boldsymbol{x}}^{\,i}\). The symbol \(\boldsymbol{e_j}\) is reserved for the vector whose \(j\)-th component is \(1\) and all other components are \(0\).

For finitely supported functions \(f, g: \mathbb{Z} \to \mathbb{C}\), the \(\ell^k\)-norm (\(k \geq 1\)) is written \(\|f\|_k\) and the \(\ell^2\)-inner product is written \(\langle f, g\rangle\). For a function \(f: \mathbb{Z}^n \to \mathbb{C}\) and a vector \(\boldsymbol{x} \in \mathbb{Z}^n\),  the slice of $f$ corresponding to $\widehat{\boldsymbol{x}}^{i}$ is defined as \(f_{\widehat{\boldsymbol{x}}^{\,i}}(x_i) := f(\boldsymbol{x})\). We say \(f\) is 1‑bounded if \(|f(\boldsymbol{x})| \leq 1\) for every \(\boldsymbol{x} \in \mathbb{Z}^n\).

For $\alpha\in\mathbb{T}$, $\|\alpha\|$ denotes the standard distance to the nearest integer.  The notation $e(x)$ denotes $\exp(2\pi i x)$, and $\mathbf{I}(F)$ equals $1$ if the statement $F$ holds and $0$ otherwise.


\section{Proof of Theorem \ref{popular difference,n-D}}\label{section:strategy}

We use two types of counting operators closely related to Theorem \ref{popular difference,n-D}. The simpler one is
\begin{equation*}
	\Lambda_{N;\boldsymbol{m}}\left(f_{0}, \ldots, f_{n}\right):=\underset{\substack{\boldsymbol{x} \in \prod_{1 \leq i \leq n}\left[N^{m_{i}}\right],\\ r \in [N]}}{\mathbf{E}} f_{0}(\boldsymbol{x}) \prod_{1 \leq j \leq n} f_{j}\left(\boldsymbol{x}+r^{m_{j}} \boldsymbol{e}_{\boldsymbol{j}}\right),
\end{equation*}
where $n\geq 2$, $\boldsymbol{m}=(m_1, \ldots, m_n)\in\mathbb{N}^{n}$, and  $f_{0},\ldots,f_{n}:\mathbb{Z}^{n}\rightarrow\mathbb{C}$ are 1-bounded. A more general version is
\begin{equation*}
	\Lambda_{q, M, \boldsymbol{N};\boldsymbol{m}}\left(f_{0}, \ldots, f_{n}\right):=\underset{\substack{\boldsymbol{x} \in \prod_{1 \leq i \leq n}\left[N_{i}\right],\\r \in [M]}}{\mathbf{E}} f_{0}(\boldsymbol{x}) \prod_{1 \leq j \leq n} f_{j}\left(\boldsymbol{x}+q^{m_{j}} r^{m_{j}} \boldsymbol{e}_{\boldsymbol{j}}\right),
\end{equation*}
where $n\geq 2$, $\boldsymbol{m}\in\mathbb{N}^{n}$, $\boldsymbol{N}=(N_1, \ldots, N_n)\in\mathbb{N}^{n}$, $q,M\geq 1$ satisfying
\begin{equation}\label{range condition}
	qM\leq N_{n}^{1/m_{n}}\leq N_{n-1}^{1/m_{n-1}}\leq\cdots\leq N_{1}^{1/m_{1}},
\end{equation}
and  $f_{0},\ldots,f_{n}:\mathbb{Z}^{n}\rightarrow\mathbb{C}$ are 1-bounded.

To state our inverse theorem for $\Lambda_{N;\boldsymbol{m}}$, let us first recall the definition of conditional expectation in the context of $\mathbb{Z}$.

\begin{defn} \label{conditional expectation}
	Let $f:\mathbb{Z}\rightarrow\mathbb{C}$ and let $\mathcal{B}$ be a partition of $\mathbb{Z}$ with all atoms finite.
	The conditional expectation $\mathbf{E}(f \rvert \mathcal{B})$ is defined as the function which, at each point $x\in\mathbb{Z}$, takes the value equal to the average of $f$ over $\mathcal{B}(x)$, where $\mathcal{B}(x)$ is the atom containing $x$.
			
	Let $\mathcal{B}$ and $\mathcal{B}^{\prime}$ be two partitions of $\mathbb{Z}$. The notation   $\mathcal{B}\vee\mathcal{B}^{\prime}$ denotes the common refinement of $\mathcal{B}$ and $\mathcal{B}^{\prime}$, and $\mathcal{B}\subset\mathcal{B}^{\prime}$ means that  $\mathcal{B}^{\prime}$ refines $\mathcal{B}$.
\end{defn}

In this paper, we deal only with partitions of $\mathbb{Z}$ in which each atom is an arithmetic progression, and all progressions within a single partition share the same common difference and the same finite length.

\begin{defn}
	Let $q,L\geq 1$. The notation $\mathcal{B}_{(qL,q)}$ denotes the following partition of  $\mathbb{Z}$:
	\begin{equation*}
		\mathbb{Z}=\bigsqcup_{s \in \mathbb{Z}, 0 < r \leq q}\{qLs+qk+r: 0 \leq k < L\},
	\end{equation*}
	where the union is disjoint.
	
	More intuitively, $\mathcal{B}_{(qL,q)}=\mathcal{B}_{1}\vee\mathcal{B}_{2}$, where $\mathcal{B}_{1}$ is the partition of $\mathbb{Z}$ into intervals of length $qL$, that is,  $\mathbb{Z}=\bigsqcup_{s \in \mathbb{Z}}(q L s, q L(s+1)]$, and $\mathcal{B}_{2}$ is the partition of $\mathbb{Z}$ into equivalence classes modulo $q$.
\end{defn}

The following is our inverse theorem for the operator $\Lambda_{N;\boldsymbol{m}}$. It can be viewed as a special case of \cite[Theorem 4.14]{KMPWW24}.
\begin{theorem}\label{Lambda_{N}-inverse}
	Given $n\geq 2$ and a strictly increasing $n$-tuple $\boldsymbol{m}$  of positive integers, there exists a constant $C=C\left(n,\boldsymbol{m}\right)>0$ such that the following holds. Let $\delta\in(0,1/10)$, $N\geq 1$, and    $f_{0},\ldots,f_{n}:\mathbb{Z}^{n}\rightarrow\mathbb{C}$ be 1-bounded. If $N \geq \delta^{-C}$ and
	\begin{equation*}
		\left|\Lambda_{N;\boldsymbol{m}}\left(f_{0},\ldots, f_{n}\right)\right| \geq \delta,
	\end{equation*}
	then there exists $1\leq q \leq \delta^{-C}$ such that for any $1 \leq i \leq n$ and any $1\leq L \leq \delta^{C} N$,
	\begin{equation*}
		\underset{\widehat{\boldsymbol{x}}^{i} \in \prod_{1 \leq j \leq n, j \neq i}\left[ N^{m_{j}}\right]}{\mathbf{E}}\left\|\mathbf{E}\left(\left.\left(f_{i}\right)_{\widehat{\boldsymbol{x}}^{i}}\chi_{\left[2N^{m_{i}}\right]} \right\rvert \mathcal{B}_{\left(q^{m_{i}}  L^{m_{i}},q^{m_{i}}\right)}\right)\right\|_{2}^{2} \geq \delta^{C} N^{m_{i}}.
	\end{equation*}
\end{theorem}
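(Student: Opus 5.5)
The plan is to run the standard PET induction, concatenation and degree-lowering scheme. I carry it out one coordinate at a time, and then convert the final local Fourier statement into the conditional-expectation form. Fix $1\le i\le n$.

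\textbf{Step 1 (PET and concatenation).} Freeze $\widehat{\boldsymbol{x}}^{i}$ and regard the count as a one-dimensional average in the variable $x_i$. Applying Cauchy--Schwarz repeatedly, following the PET scheme of \cite{KKL24 b}, removes the other functions $f_j$ for $j\neq i$. Each $f_j$ for $j\ne i$ depends on $x_i$ only as a passive variable, while its own coordinate is shifted by $r^{m_j}$, so these shifts can be eliminated by differencing.

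Concatenating the resulting averages of box norms, as in \cite{KKL24 b}, I expect to arrive at a bound of the form
\[
\underset{\widehat{\boldsymbol{x}}^{i}}{\mathbf{E}}\,\|(f_i)_{\widehat{\boldsymbol{x}}^{i}}\chi_{[2N^{m_i}]}\|_{U^s[N^{m_i}]}\gtrsim \delta^{C},
\]
with $s=s(n,\boldsymbol{m})$. I would rather run this for a \emph{dual function}: replace $f_i$ by
\[
F_i(\boldsymbol{x})=\mathbf{E}_r f_0(\boldsymbol{x}-r^{m_i}\boldsymbol{e_i})\prod_{j\ne i}f_j(\boldsymbol{x}-r^{m_i}\boldsymbol{e_i}+r^{m_j}\boldsymbol{e_j}).
\]
One Cauchy--Schwarz gives $\Lambda(f_0,\dots,F_i,\dots,f_n)\ge\delta^2$, and the same Gowers-norm control then holds for $F_i$.

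\textbf{Step 2 (degree lowering).} Next, lower $s$ down to $1$ for $F_i$. Use the inverse theorem for $U^s$ on a dual function, then apply dual-difference interchange (Lemma \ref{dual-difference interchange}) to reduce to the one-dimensional case $n=1$. There the relevant configuration is $x,x+r^{m}$ twisted by phases, and one must show that the emerging phase has low rank: its coefficients are major-arc with denominator $\le\delta^{-C}$. This follows from Weyl-sum estimates for the polynomial $r^{m_i}$ together with equidistribution in the remaining variables.

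This is the step I expect to be the main obstacle, namely controlling the phases uniformly over the slices $\widehat{\boldsymbol{x}}^{i}$. My first attempt would be the self-improving iteration of Proposition \ref{degree-lowering}. The outcome of the step should be a major-arc Fourier correlation: there are $q_0\le\delta^{-C}$ and frequencies $\alpha(\widehat{\boldsymbol{x}}^{i})=a/q_0+O(\delta^{-C}N^{-m_i})$ such that
\[
\mathbf{E}_{\widehat{\boldsymbol{x}}^{i}}\bigl|\textstyle\sum_{x_i}F_i\chi\,e(\alpha x_i)\bigr|\gtrsim\delta^{C}N^{m_i}.
\]
The final $q$ is taken to be the least common multiple of the $q_0$'s over $i$, which is still $\le\delta^{-C}$.

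\textbf{Step 3 (conversion to conditional expectation).} On each atom of $\mathcal{B}_{(q^{m_i}L^{m_i},q^{m_i})}$ the phase $e(\alpha x_i)$ is nearly constant. This holds because $q_0\mid q^{m_i}$ and the atom length is $q^{m_i}L^{m_i}\le \delta^{Cm_i}q^{m_i}N^{m_i}$, which is small against $N^{m_i}/\delta^{-C}$ once the constant in $L\le\delta^CN$ is chosen large. Hence the correlation is captured by $\mathbf{E}(F_i\chi\mid\mathcal{B})$, and Cauchy--Schwarz gives $\|\mathbf{E}(F_i\chi\mid\mathcal{B})\|_2^2\gtrsim\delta^{C}N^{m_i}$.

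Finally, transfer this from $F_i$ back to $f_i$. Conditional expectation is self-adjoint, so
\[
\langle f_i\chi,\mathbf{E}(F_i\chi\mid\mathcal{B})\rangle=\langle\mathbf{E}(f_i\chi\mid\mathcal{B}),F_i\chi\rangle .
\]
I then pass from $\Lambda(f_0,\dots,f_i,\dots,f_n)=\langle f_i,F_i\rangle\approx\delta$ to correlation of $f_i$ with $\mathbf{E}(F_i\mid\mathcal{B})$. Here I use that $F_i$ is approximately $\mathcal{B}$-measurable in the $L^2$ sense after degree lowering, with an error of size $\delta^{C}$ controlled by Step 2. Cauchy--Schwarz then yields the stated bound on $\|\mathbf{E}(f_i\chi\mid\mathcal{B})\|_2^2$.
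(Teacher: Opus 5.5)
The gap is in the last part of Step 3, the transfer from $F_i$ back to $f_i$. Your Steps 1--2 follow the same broad scheme as the paper: PET and concatenation as a black box, then the self-improving loop of Proposition \ref{degree-lowering}.

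Degree lowering for a dual function gives only a \emph{lower} bound $\|\mathbf{E}(F_i\chi\mid\mathcal{B})\|_2^2\ge\delta^{C}N^{m_i}$ on the structured part of $F_i$. It gives no upper bound on $F_i-\mathbf{E}(F_i\mid\mathcal{B})$, and your claim that $F_i$ is $\mathcal{B}$-measurable up to an $L^2$-error $\delta^{C}$ is false in general. Without that claim, $\langle f_i,F_i\rangle\gtrsim\delta$ says nothing about $\mathbf{E}(f_i\chi\mid\mathcal{B})$, because $f_i$ may correlate only with $F_i-\mathbf{E}(F_i\mid\mathcal{B})$. This really happens, since your modulus for index $i$ is extracted from $F_i$, that is, from $(f_j)_{j\ne i}$ alone.

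For a concrete failure, take $n=2$, $\boldsymbol{m}=(1,2)$, $f_1\equiv 1$, and $f_0(\boldsymbol{z})=\frac1K\sum_{k\le K}e(z_2/p_k)$ with $p_1<\cdots<p_K$ the first $K$ odd primes.
\begin{itemize}
\item Up to a slowly varying cutoff weight, $F_2(\boldsymbol{x})=\frac1K\sum_k c_ke(x_2/p_k)$ with Gauss-sum coefficients $|c_k|\approx p_k^{-1/2}$. Its energy $\asymp K^{-2}\log\log K$ is spread over many coprime denominators, and no $q$ of polynomial size captures more than a vanishing proportion of it.
\item Fix a large admissible $L$, let $\mathcal{B}=\mathcal{B}_{(9L^2,9)}$, and put $f_2:=\overline{F_2-\mathbf{E}(F_2\mid\mathcal{B})}/2$. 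Then $\delta:=\Lambda(f_0,f_1,f_2)$ is half the normalized energy of $F_2-\mathbf{E}(F_2\mid\mathcal{B})$, so $\delta\asymp K^{-2}\log\log K$.
\item Your Steps 1--2 may legitimately return $q=3$. For $i=2$ this comes from the component $e(x_2/3)$ of $F_2$, which is unchanged. For $i=1$ the zero frequency works, since $\sum_{x_1}F_1$ recovers $\Lambda$.
\item Yet $\mathbf{E}(f_2\chi\mid\mathcal{B})$ is negligible, so your final inequality fails for the $q$ you constructed. The theorem does hold for this triple, but only with a different $q$, one that depends on $f_2$ itself.
\end{itemize}

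The idea you are missing is that the paper never transfers a dual function back to the slot it replaced. Instead it unpacks the dual into information about the \emph{other}, original functions.
\begin{itemize}
\item In Implication 1 of Proposition \ref{degree-lowering}, the top-degree function $f_n$ is stashed, and Lemma \ref{dual-difference interchange}(i) and Lemma \ref{U^2 inverse} are applied to $\Delta_{\boldsymbol{h}}F^{(n)}$. After a change of variables, $f_n$ is replaced by the phase $e(\beta x_n+\beta r^{m_n})$. The $(n-1)$-dimensional case of Theorem \ref{Lambda_{N}-inverse with phase} then gives statement (b) for $\Delta_{\boldsymbol{h}|\boldsymbol{e_n}}f_i$, $i<n$.
\item This is why the theorem is proved for operators twisted by $\boldsymbol{x}$-dependent phases, by induction on $n$. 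It is also why the stashed slot is always the top-degree one: the new phase must have degree above all remaining $m_j$. Unpacking $F_i$ for $i<n$, as your per-coordinate plan would require, creates a phase of degree $m_i$, which the phase-twisted statement does not cover.
\item Implication 2 stashes $f_1$ and uses Lemma \ref{dual-difference interchange}(ii) to turn (b) into statement (c) for $f_n$ itself. Lemma \ref{Claim 3} then lowers $s$.
\item In the base case $n=1$, the dual function is used only to show that the phase coefficients $\alpha_j(x)$ are constant on a dense set (Proposition \ref{remove dependencies}). The Fourier/Weyl argument of Proposition \ref{standard Fourier method} is then applied to $f$ itself.
\end{itemize}
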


In this section, we follow closely the argument in \cite[Sections 5--7]{PPS24} to show how to derive Theorem \ref{popular difference,n-D} from Theorem \ref{Lambda_{N}-inverse}.
The strategy is as follows. We first use Theorem \ref{Lambda_{N}-inverse} to prove the inverse theorem  for the more general operator $\Lambda_{q, M, \boldsymbol{N};\boldsymbol{m}}$ (Theorem \ref{Lambda_{q,M}-inverse}) via an averaging process (thanks to the homogeneity of monomials). We then carry out the energy increment argument to approximate $\Lambda_{q, M, \boldsymbol{N};\boldsymbol{m}}\left(f_{0},\ldots,f_{n}\right)$ by a ``linear object'' (see Proposition \ref{energy-increment}). We finally establish a lower bound for this ``linear object'' (see Proposition \ref{linearization}) and complete the proof of Theorem \ref{popular difference,n-D}.

Let us recall some standard results on conditional expectation (see for instance \cite[Lemma 4.3]{PP22}).

\begin{lemma}\label{property of conditional expectation}
	Let $\mathcal{B},\mathcal{B}^{\prime}$ be partitions  of $\mathbb{Z}$ with all atoms finite, and  $f,g:\mathbb{Z}\rightarrow\mathbb{C}$ be  finitely supported.
	
	\begin{enumerate}[(i)]
		\item\label{property of conditional expectation,(i)} The conditional expectation operator is linear and self-adjoint in the sense that
		\begin{equation*}
			\langle\mathbf{E}(f \rvert \mathcal{B}), g\rangle=\langle f, \mathbf{E}(g \rvert \mathcal{B})\rangle.
		\end{equation*}
		
		\item\label{property of conditional expectation,(ii)} If $\mathcal{B}^{\prime}$ refines $\mathcal{B}$, then
		\begin{equation*}
			\left\|\mathbf{E}\left(f \rvert \mathcal{B}^{\prime}\right)-\mathbf{E}(f \rvert \mathcal{B})\right\|_{2}^{2}=\left\|\mathbf{E}\left(f \rvert \mathcal{B}^{\prime}\right)\right\|_{2}^{2}-\|\mathbf{E}(f \rvert \mathcal{B})\|_{2}^{2}.
		\end{equation*}
		Especially, $\left\|\mathbf{E}\left(f \rvert \mathcal{B}^{\prime}\right)\right\|_{2}\geq \|\mathbf{E}(f \rvert \mathcal{B})\|_{2}$.
		
		\item\label{property of conditional expectation,(iii)} If $\mathcal{B}^{\prime}$ refines $\mathcal{B}$, then
		\begin{equation*}
			\mathbf{E}\left(\mathbf{E}\left(f \rvert \mathcal{B}\right)\rvert\mathcal{B}^{\prime}\right)=\mathbf{E}\left(f \rvert \mathcal{B}\right).
		\end{equation*}
		If $\mathcal{B}$ refines $\mathcal{B}^{\prime}$, then
		\begin{equation*}
			\mathbf{E}\left(\mathbf{E}\left(f \rvert \mathcal{B}\right)\rvert\mathcal{B}^{\prime}\right)=\mathbf{E}\left(f \rvert \mathcal{B}^{\prime}\right).
		\end{equation*}
	\end{enumerate}
\end{lemma}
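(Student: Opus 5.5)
The statement to prove is Lemma \ref{property of conditional expectation}. All three parts are finite-dimensional facts about averaging over atoms, and I will prove them directly from Definition \ref{conditional expectation}. Throughout, for a partition $\mathcal{B}$ with finite atoms I write
\begin{equation*}
\mathbf{E}(f\rvert\mathcal{B})=\sum_{P\in\mathcal{B}}\Big(|P|^{-1}\sum_{y\in P}f(y)\Big)\chi_P .
\end{equation*}
Since $f,g$ are finitely supported, only finitely many atoms contribute to any of the sums below, so all sums are finite.

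\textbf{Part (i).} Linearity is immediate from the formula. For self-adjointness, compute
\begin{equation*}
\langle \mathbf{E}(f\rvert\mathcal{B}),g\rangle=\sum_{P\in\mathcal{B}}|P|^{-1}\Big(\sum_{y\in P}f(y)\Big)\overline{\Big(\sum_{x\in P}g(x)\Big)} .
\end{equation*}
This expression is symmetric under exchanging the roles of $f$ and $g$ together with complex conjugation, so it equals $\langle f,\mathbf{E}(g\rvert\mathcal{B})\rangle$. A useful consequence: if $h$ is constant on the atoms of $\mathcal{B}$, then $\mathbf{E}(h\rvert\mathcal{B})=h$, and hence $\langle f,h\rangle=\langle \mathbf{E}(f\rvert\mathcal{B}),h\rangle$.

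\textbf{Part (iii).} I prove this before (ii). Suppose $\mathcal{B}'$ refines $\mathcal{B}$. Each atom of $\mathcal{B}'$ lies inside an atom of $\mathcal{B}$, and $\mathbf{E}(f\rvert\mathcal{B})$ is constant on atoms of $\mathcal{B}$. So it is constant on atoms of $\mathcal{B}'$, and averaging it over a $\mathcal{B}'$-atom returns it unchanged.

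Now suppose instead that $\mathcal{B}$ refines $\mathcal{B}'$. Each atom $P'\in\mathcal{B}'$ is a disjoint union of atoms $P\in\mathcal{B}$. Then
\begin{equation*}
\sum_{x\in P'}\mathbf{E}(f\rvert\mathcal{B})(x)=\sum_{P\subset P'}|P|\cdot|P|^{-1}\sum_{y\in P}f(y)=\sum_{y\in P'}f(y).
\end{equation*}
Dividing by $|P'|$ gives $\mathbf{E}(\mathbf{E}(f\rvert\mathcal{B})\rvert\mathcal{B}')=\mathbf{E}(f\rvert\mathcal{B}')$.

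\textbf{Part (ii).} Let $\mathcal{B}'$ refine $\mathcal{B}$, and set $u=\mathbf{E}(f\rvert\mathcal{B}')$ and $v=\mathbf{E}(f\rvert\mathcal{B})$.
\begin{enumerate}[(a)]
\item By the second statement of (iii), with the two partitions playing those roles, $\mathbf{E}(u\rvert\mathcal{B})=v$.
\item The function $v$ is constant on atoms of $\mathcal{B}$. By the consequence noted after (i), $\langle u,v\rangle=\langle \mathbf{E}(u\rvert\mathcal{B}),v\rangle=\|v\|_2^2$.
\item Expanding the square,
\begin{equation*}
\|u-v\|_2^2=\|u\|_2^2-2\,\mathrm{Re}\langle u,v\rangle+\|v\|_2^2=\|u\|_2^2-\|v\|_2^2 .
\end{equation*}
\end{enumerate}
The monotonicity $\|u\|_2\ge\|v\|_2$ follows because the left-hand side is nonnegative.

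There is no real obstacle in any of this. The only point needing care is the orientation of the refinement hypotheses in (iii), which the paper states in both directions. Once those are fixed, (ii) is just the Pythagorean identity for the orthogonal projection onto functions constant on $\mathcal{B}$-atoms.
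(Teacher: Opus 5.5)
Your proof is correct: each part is a complete verification from Definition~\ref{conditional expectation}, and deriving (ii) from the tower property in (iii) together with self-adjointness is the standard Pythagorean argument. The paper does not prove this lemma itself. It quotes these facts as standard, citing \cite[Lemma 4.3]{PP22}, so your argument supplies the routine verification the paper defers to that reference.
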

		
We list some simple properties of $\mathcal{B}_{(qL,q)}$ for later use.
\begin{lemma}\label{property of simple partition}
	Let $k,q,\widetilde{q},L,L_{1},L_{2}\geq 1$, and $f:\mathbb{Z}\rightarrow\mathbb{C}$ be 1-bounded with support in an interval of length $N$.
	\begin{enumerate}[(i)]
		\item\label{property of simple partition,(i)}
		(Formula for the $\ell^{k}$-norm)
		\begin{equation*}
			\left\|\mathbf{E}\left(f \rvert \mathcal{B}_{\left(qL,q\right)}\right)\right\|_{k}^{k}=L \sum_{\substack{s \in \mathbb{Z}, \\ x \in(q L s, q L s+q]}}\left|\underset{0\leq l < L}{\mathbf{E}}f(x+q l)\right|^{k}.
		\end{equation*}
		\item\label{property of simple partition,(ii)}
		(Periodicity)
		\begin{equation*}
			\left\|\mathbf{E}\left(f(\cdot+q L)\rvert \mathcal{B}_{\left(qL,q\right)}\right)\right\|_{k}=\left\|\mathbf{E}\left(f \rvert \mathcal{B}_{\left(qL,q\right)}\right)\right\|_{k}.
		\end{equation*}
		\item\label{property of simple partition,(iii)}
		(``Almost periodicity'')
		\begin{equation*}
			|h|<q \Rightarrow\left\|\mathbf{E}\left(f(\cdot+h)\rvert \mathcal{B}_{\left(qL,q\right)}\right)\right\|_{2}^{2}=\left\|\mathbf{E}\left(f \rvert \mathcal{B}_{\left(qL,q\right)}\right)\right\|_{2}^{2}+O\left(\frac{h}{q} N\right);
		\end{equation*}
		\begin{equation*}
			h=s q, |s|<L \Rightarrow\left\|\mathbf{E}\left(f(\cdot+h) \rvert \mathcal{B}_{\left(qL,q\right)}\right)\right\|_{2}^{2}=\left\|\mathbf{E}\left(f \rvert \mathcal{B}_{\left(qL,q\right)}\right)\right\|_{2}^{2}+O\left(\frac{s}{L} N\right).
		\end{equation*}
		\item\label{property of simple partition,(iv)}
		(``Almost refinement'') Denote $\mathcal{B}_{1}:=\mathcal{B}_{\left(q L_{1}, q\right)}$ and $\mathcal{B}_{2}:=\mathcal{B}_{\left(q \tilde{q} L_{2}, q \tilde{q}\right)}$. If $\tilde{q} L_{2}\leq L_{1}$, then we have
		\begin{equation*}
			\left\|\mathbf{E}\left(f \rvert \mathcal{B}_{1} \vee \mathcal{B}_{2}\right)\right\|_{2}^{2}=\left\|\mathbf{E}\left(f \rvert \mathcal{B}_{2}\right)\right\|_{2}^{2}+O\left(\frac{\tilde{q} L_{2}}{L_{1}} N\right).
		\end{equation*}
	\end{enumerate}	
\end{lemma}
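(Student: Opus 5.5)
The plan is to prove all four parts by computing atom by atom. If $P$ is an atom of a partition $\mathcal{B}$, then $\mathbf{E}(f\rvert\mathcal{B})$ is constant on $P$ and equal to $\mathbf{E}_{y\in P}f(y)$. Hence
\begin{equation*}
\|\mathbf{E}(f\rvert\mathcal{B})\|_k^k=\sum_{P}|P|\,\Bigl|\underset{y\in P}{\mathbf{E}}f(y)\Bigr|^k .
\end{equation*}
For (i), note that the atoms of $\mathcal{B}_{(qL,q)}$ are exactly the sets $\{x+ql:0\le l<L\}$ with $x\in(qLs,qLs+q]$, and each has $L$ elements. Substituting this into the display gives (i) immediately.

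For (ii), translation by $qL$ maps the atom indexed by $(s,x)$ onto the atom indexed by $(s+1,x+qL)$. So the formula in (i) for $f(\cdot+qL)$ is a reindexing of the formula for $f$, and the two norms agree.

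For (iii) and (iv) I will repeatedly use two elementary bounds, both following from 1-boundedness. First, an atom $P$ contributes at most $|P|\,|\mathbf{E}_P f|^2\le |P\cap \operatorname{supp} f|$. Second, if two progressions $P,Q$ of length $L$ differ in $t$ elements, then
\begin{equation*}
L\,\bigl||\mathbf{E}_Pf|^2-|\mathbf{E}_Qf|^2\bigr|\le 2L|\mathbf{E}_Pf-\mathbf{E}_Qf|\le 4t .
\end{equation*}
In each case the task is to show that only few atoms meeting the support are ``bad'', and then to sum these errors.

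For the first claim of (iii), the averages of $f(\cdot+h)$ over atoms $P$ are the averages of $f$ over the translates $P+h$. Each $P+h$ is a progression of difference $q$ and length $L$. If $x+h$ still lies in $(qLs,qLs+q]$, then $P+h$ is itself an atom and nothing changes. Otherwise $P+h$ differs from a genuine atom $Q$ (the progression of the same residue in the same block) in only one element at each end, so the error is $O(1)$ for such an atom. The bad residues $x$ form a set of size at most $|h|$ next to each block boundary. Moreover, a bad atom matters only if it meets the support, which is an interval of length $N$. This bounds the total error by $O\bigl(|h|(N/(qL)+1)\bigr)$, which is $O(|h|N/q)$ in the relevant regime $N\ge q$ (the regime in which the lemma is applied).

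For $h=sq$, the translate $P+sq$ is the same progression $P$ shifted $|s|$ steps along itself. It therefore overlaps the atom $P$ in $L-|s|$ elements, giving an error of $O(|s|)$ per atom meeting the support. The number of such atoms is $\lesssim N/L+q$. Summing gives $O(|s|N/L)$ once $N\gtrsim qL$.

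For (iv), the residue classes mod $q\tilde q$ refine those mod $q$. Hence the atoms of $\mathcal{B}_1\vee\mathcal{B}_2$ are the atoms of $\mathcal{B}_2$, some of which are cut into two pieces by the block boundaries $qL_1\mathbb{Z}$ of $\mathcal{B}_1$. An atom of $\mathcal{B}_2$ sits inside a block of length $q\tilde qL_2\le qL_1$, so it can be cut only if its block contains a multiple of $qL_1$. At most $N/(qL_1)+2$ such blocks meet the support, and each contains $q\tilde q$ atoms. By the first bound above, each such atom (or its pieces) contributes at most $O(L_2)$ to either norm. The total discrepancy is therefore $O\bigl((N/(qL_1)+1)q\tilde qL_2\bigr)=O(\tilde qL_2N/L_1)$ when $N\gtrsim qL_1$.

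The main obstacle is this boundary bookkeeping. One must count precisely the atoms that are altered by shifts or cuts and still meet the support. The additive ``$+1$'' terms coming from blocks that only partially overlap the support must also be absorbed. This requires the mild size condition $N\gtrsim qL$ (respectively $N\gtrsim qL_1$), which holds in every later application, or else the error terms in (iii) and (iv) must be stated in the form $O(\cdot)+O(\text{length of one block})$.
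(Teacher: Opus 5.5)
Your route is the paper's own: the atom-by-atom formula for (i), a reindexing for (ii), and for (iii)--(iv) counting the atoms altered by the shift or by the cut. The paper just asserts that the two conditional expectations in (iv) ``differ on a set of size $O((\tilde qL_2/L_1)N)$''.

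\textbf{Part (iv).} Your caveat is correct, and it exposes an imprecision in the statement: without a condition like $N\gtrsim qL_1$, part (iv) is false. Take $q=\tilde q=1$, $L_2=2$, $L_1=2k+1$ and $f=\chi_{\{2k+1\}}$, so $N=1$. The $\mathcal{B}_2$-atom $\{2k+1,2k+2\}$ is split by the $\mathcal{B}_1$-boundary at $2k+1$. Hence $\|\mathbf{E}(f\rvert\mathcal{B}_1\vee\mathcal{B}_2)\|_2^2=1$ while $\|\mathbf{E}(f\rvert\mathcal{B}_2)\|_2^2=1/2$, whereas $\tilde qL_2N/L_1=2/(2k+1)\to 0$. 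The paper's one-line justification tacitly uses the same size condition. It is harmless, because in the energy-increment argument the partition playing the role of $\mathcal{B}_1$ always has blocks of length at most $N_i$.

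\textbf{Part (iii).} Here no size condition is needed, so your proof falls short of the stated claim and your suggestion to restate (iii) is unwarranted.

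In the first claim, your bound $O(|h|(N/(qL)+1))$ already gives $O(|h|N/q)$ when $N\ge q$. When $N<q$, the support meets every residue class mod $q$ at most once. So each atom, and each translate of an atom, meets it in at most one point, and both sides equal $L^{-1}\sum_y|f(y)|^2$ exactly.

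In the second claim, the loss comes from charging every altered atom $4t$ regardless of how much of the support it carries. Write $S$ for the support, $a_P=\mathbf{E}_{y\in P}f(y+h)$ and $b_P=\mathbf{E}_{y\in P}f(y)$, and use instead
\[
L\bigl||a_P|^2-|b_P|^2\bigr|\le L\,|a_P-b_P|\,\bigl(|a_P|+|b_P|\bigr)\le \frac{2|s|}{L}\bigl(|(P+h)\cap S|+|P\cap S|\bigr).
\]
Both $\{P\}$ and $\{P+h\}$ partition $\mathbb{Z}$, so summing over atoms gives a total error of at most $4|s||S|/L\lesssim |s|N/L$. No relation between $N$ and $qL$ is required.

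So only (iv) needs an extra hypothesis, or an additional error term of the size of one $\mathcal{B}_2$-block.
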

\begin{proof}
	Observe that \eqref{property of simple partition,(i)} holds by definition, while \eqref{property of simple partition,(ii)} and \eqref{property of simple partition,(iii)} follow easily from \eqref{property of simple partition,(i)} by changing variables.
	\eqref{property of simple partition,(iv)} holds because $\mathbf{E}\left(f \rvert \mathcal{B}_{1} \vee \mathcal{B}_{2}\right)$ and $\mathbf{E}\left(f \rvert \mathcal{B}_{2}\right)$ differ on a set of size $O\left( \left(\tilde{q} L_2 / L_1\right) N \right)$.
\end{proof}
\begin{remark}
    All implicit constants in the lemma above can be explicitly chosen as $8$ (for instance).
\end{remark}

The following is our inverse theorem  for the general  operator $\Lambda_{q, M, \boldsymbol{N};\boldsymbol{m}}$.

\begin{theorem}\label{Lambda_{q,M}-inverse}
	Given $n\geq 2$ and a strictly increasing $n$-tuple  $\boldsymbol{m}$ of positive integers, there exists a constant $C=C\left(n,\boldsymbol{m}\right)>0$ such that the following holds. Let $\delta\in(0,1/10)$, $q,M\geq 1$ and $\boldsymbol{N}\in\mathbb{N}^{n}$ satisfy \eqref{range condition}, and   $f_{0},\ldots,f_{n}:\mathbb{Z}^{n}\rightarrow\mathbb{C}$ be 1-bounded. If $M \geq \delta^{-C}$ and
	\begin{equation*}
		\left|\Lambda_{q,M,\boldsymbol{N};\boldsymbol{m}}\left(f_{0},\ldots, f_{n}\right)\right| \geq \delta,
	\end{equation*}
	then there exists $1\leq \tilde{q} \leq \delta^{-C}$ such that for any $1 \leq i \leq n$ and any $1\leq L \leq \delta^{C} M$,
	\begin{equation*}
		\underset{\widehat{\boldsymbol{x}}^{i} \in \prod_{1 \leq j \leq n, j \neq i}\left[ N_{j}\right]}{\mathbf{E}}\left\|\mathbf{E}\left(\left.\left(f_{i}\right)_{\widehat{\boldsymbol{x}}^{i}}\chi_{\left[2N_{i}\right]} \right\rvert \mathcal{B}_{\left(q^{m_{i}}\tilde{q}^{m_{i}}  L^{m_{i}},q^{m_{i}}\tilde{q}^{m_{i}}\right)}\right)\right\|_{2}^{2}\geq \delta^{C} N_{i}.
	\end{equation*}
\end{theorem}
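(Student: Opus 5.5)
We will deduce the theorem from Theorem \ref{Lambda_{N}-inverse} by an averaging (localisation) argument: cut the box $\prod_i[N_i]$ into smaller boxes adapted to the scale $qM$, rescale each one so that it becomes an instance of $\Lambda_{M;\boldsymbol{m}}$, and apply the inverse theorem there.

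First we rewrite $\Lambda_{q,M,\boldsymbol{N};\boldsymbol{m}}$ as an average of operators $\Lambda_{M;\boldsymbol{m}}$. For each $i$ write $x_i = a_i + q^{m_i} y_i$, where $a_i$ is a residue modulo $q^{m_i}$, $y_i$ ranges over a block of length $M^{m_i}$, and the blocks tile $[N_i]$. We truncate at the boundary. Condition \eqref{range condition} gives $(qM)^{m_i}\le N_i$, so the boundary loss is $O(\delta^{C'})$ once a margin is built in; if needed we shrink $M$ slightly, or use overlapping shifted blocks $y\in t+[M^{m_i}]$ averaged over $t$ to avoid edge effects. The shift $x_i\mapsto x_i+q^{m_i}r^{m_i}$ becomes $y_i\mapsto y_i+r^{m_i}$. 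We define the rescaled functions
\[
g_j^{(\boldsymbol{a},\boldsymbol{t})}(\boldsymbol{y}) := f_j\bigl(\boldsymbol{a}+(q^{m_i}(t_i+y_i))_i\bigr).
\]
Then $\Lambda_{q,M,\boldsymbol{N};\boldsymbol{m}}(f_0,\ldots,f_n)$ equals an average over $(\boldsymbol{a},\boldsymbol{t})$ of $\Lambda_{M;\boldsymbol{m}}(g_0,\ldots,g_n)$, up to an error $O(\delta/2)$.

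By pigeonholing, a set of parameters $(\boldsymbol{a},\boldsymbol{t})$ of proportion at least $\delta/4$ satisfies $|\Lambda_{M;\boldsymbol{m}}(g^{(\boldsymbol{a},\boldsymbol{t})})|\ge\delta/4$. Since $M\ge\delta^{-C}$, Theorem \ref{Lambda_{N}-inverse} applies to each such parameter. It gives a $q_{\boldsymbol{a},\boldsymbol{t}}\le\delta^{-C}$ together with the $L^2$ lower bound on the slices of $g_i$ against $\mathcal{B}_{(q_{\boldsymbol{a},\boldsymbol{t}}^{m_i}L^{m_i},\,q_{\boldsymbol{a},\boldsymbol{t}}^{m_i})}$, valid for all $L\le\delta^CM$ and all $i$. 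Pigeonholing once more over the at most $\delta^{-C}$ possible values of $q_{\boldsymbol{a},\boldsymbol{t}}$, we obtain a single $\tilde q$ that works for a proportion at least $\delta^{C+1}$ of the parameters.

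Next we pull the structure back to $f_i$. Under the map $y\mapsto a_i+q^{m_i}(t_i+y)$, an atom of $\mathcal{B}_{(\tilde q^{m_i}L^{m_i},\tilde q^{m_i})}$ is sent to an arithmetic progression with common difference $q^{m_i}\tilde q^{m_i}$ and length $L^{m_i}$. Such a progression is an atom of $\mathcal{B}_{((q\tilde q L)^{m_i},(q\tilde q)^{m_i})}$, except that its position is shifted by the offset $t_i$. This is the main obstacle: the atom boundaries of the pulled-back partition need not align with those of the global partition. We handle it with Lemma \ref{property of simple partition}. Averaging over the shifts $t_i$, the periodicity and almost-periodicity parts (ii) and (iii) show that the mismatch costs only $O(\text{shift}/L)$ relative error. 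Alternatively, we restrict $t_i$ to multiples of $(\tilde q L)^{m_i}$, which is possible after pigeonholing in $t$ modulo that period. The cutoff $\chi_{[2N_i]}$ absorbs the boundary pieces.

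Finally we sum the local $L^2$ bounds. The square of the conditional expectation is local on atoms, and the local blocks partition the fibre $[N_i]$ up to negligible overlap. Using nonnegativity, the contributions of the good parameters (proportion at least $\delta^{C+1}$), each of size $\delta^C M^{m_i}$ relative to a block of size $M^{m_i}$, add up to $\delta^{C'}N_i$ after averaging over $\widehat{\boldsymbol{x}}^i$. The $q$-adic rescaling of $\boldsymbol{x}$ is measure-preserving in the relevant sense. Renaming constants completes the proof.
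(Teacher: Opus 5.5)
Up to the pull-back step your route matches the paper's: write $\Lambda_{q,M,\boldsymbol{N};\boldsymbol{m}}$ as an average of $\Lambda_{M;\boldsymbol{m}}$ over rescaled translates, apply Theorem~\ref{Lambda_{N}-inverse} on a popular set of translates, pigeonhole a common $\tilde q$, and sum the local energies. With $Q:=(q\tilde q)^{m_i}$, that sum is the paper's sliding-window bound $\mathbf{E}_{\widehat{\boldsymbol{x}}^{i}}\sum_{x_i}\bigl|\mathbf{E}_{0\le l<L^{m_i}}(f_i)_{\widehat{\boldsymbol{x}}^{i}}(x_i+Ql)\bigr|^2\ge\delta^{C}N_i$. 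In other words, the energy for $\mathcal{B}_{(QL^{m_i},Q)}$ is large \emph{on average over its translates}.

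The gap is at the misalignment you flagged, and neither of your fixes closes it. Modulo the period $QL^{m_i}$, the offset of a block relative to the canonical atoms is arbitrary, typically of the size of the period. Lemma~\ref{property of simple partition}(ii) only removes shifts that are exact multiples of the period, and (iii) only shifts that are small compared with it. So for a typical offset your ``$O(\text{shift}/L)$'' cost is $O(N_i)$, not small. Pigeonholing $t$ modulo $(\tilde qL)^{m_i}$ gives a residue class in which good parameters are dense, but that class need not be $0$. What you then have is large energy for $\mathcal{B}_{(QL^{m_i},Q)}$ translated by some $Qk$, which is exactly the paper's intermediate inequality \eqref{Lambda_{q,M}-inverse,eq1}. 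At a fixed scale this does not imply the conclusion. Take a $\pm1$-valued function that is constant on the atoms of the partition into intervals of length $P$ shifted by $P/2$, with alternating signs on consecutive atoms. It has energy $\approx N$ for the shifted partition and $\approx 0$ for the unshifted one.

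The missing idea is to trade scale for alignment, using that Theorem~\ref{Lambda_{N}-inverse} holds simultaneously for all scales $L\le\delta^{C}M$:
\begin{enumerate}
\item Run your argument at a coarse scale $L=K\tilde L$ with $K=\lfloor\delta^{-C}\rfloor$. This gives large energy for the translate by $Qk$; write $k=a_0+b_0\tilde L^{m_i}$ with $0\le a_0<\tilde L^{m_i}$.
\item The shift $Qa_0$ is now small relative to the coarse period, so Lemma~\ref{property of simple partition}(iii) removes it at cost $O(K^{-m_i}N_i)$.
\item Refine $\mathcal{B}_{(QL^{m_i},Q)}$ to $\mathcal{B}_{(Q\tilde L^{m_i},Q)}$, which cannot decrease the energy by Lemma~\ref{property of conditional expectation}(ii).
\item The remaining shift $Qb_0\tilde L^{m_i}$ is an exact multiple of the fine period, so Lemma~\ref{property of simple partition}(ii) removes it.
\end{enumerate}
This gives the statement for all $\tilde L\le\delta^{C}M/K$, at the price of a worse constant. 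Without the refinement step you cannot in general get the conclusion at the same scale where the local structure was found.
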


\begin{proof}
    The constants $C$ below depend only on $n$ and $\boldsymbol{m}$, and are not necessarily the same at each occurrence. Without loss of generality, we may assume that the support of $f_{i}$ lies in $\prod_{1 \leq j \leq n}[2^{\mathbf{I}(j=i)} N_{j}]$ for each $0\leq i\leq n$.

	We begin by expressing  $\Lambda_{q,M,\boldsymbol{N};\boldsymbol{m}}$ in the form of an average of $\Lambda_{M;\boldsymbol{m}}$:
	\begin{align*}
		&\Lambda_{q, M, \boldsymbol{N};\boldsymbol{m}}\left(f_{0}, \ldots, f_{n}\right) \\
		=&C_n \underset{\substack{\boldsymbol{x} \in \prod_{1 \leq j \leq n}\left[ \pm 2 N_{j}\right],r \in [M], \\
				\boldsymbol{x}^{\prime} \in \prod_{1 \leq j \leq n}\left[M^{m_{j}}\right]}}{\mathbf{E}} f_{0}\Bigg(\boldsymbol{x}+\sum_{1 \leq j \leq n} q^{m_{j}} x_{j}^{\prime} \boldsymbol{e}_{\boldsymbol{j}}\Bigg)\cdot\\
		&\quad\quad\quad\quad\quad\quad\prod_{1 \leq i \leq n} f_{i}\Bigg(\boldsymbol{x}+\sum_{1 \leq j \leq n, j \neq i} q^{m_{j}} x_{j}^{\prime} \boldsymbol{e}_{\boldsymbol{j}}+q^{m_{i}}\left(x_{i}^{\prime}+r^{m_{i}}\right) \boldsymbol{e}_{\boldsymbol{i}}\Bigg) \\
		=&C_n \underset{\substack{\boldsymbol{x} \in \prod_{1 \leq j \leq n}\left[ \pm 2 N_{j}\right],r \in [M], \\ \boldsymbol{x}^{\prime} \in \prod_{1 \leq j \leq n}\left[M^{m_{j}}\right]}}{\mathbf{E}} f_{0}^{\boldsymbol{x}, q}\left(\boldsymbol{x}^{\prime}\right) \prod_{1 \leq i \leq n} f_{i}^{\boldsymbol{x}, q}\left(\boldsymbol{x}^{\prime}+r^{m_{i}} \boldsymbol{e_{i}}\right) \\
		=& C_n \underset{\boldsymbol{x} \in \prod_{1 \leq j \leq n}\left[ \pm 2 N_{j}\right]}{\mathbf{E}} \Lambda_{M;\boldsymbol{m}}\left(f_{0}^{\boldsymbol{x}, q}, \ldots, f_{n}^{\boldsymbol{x}, q}\right),
	\end{align*}
	where $f_{i}^{\boldsymbol{x}, q}\left(\boldsymbol{x}^{\prime}\right):=f_{i}\left(\boldsymbol{x}+\sum_{1 \leq j \leq n} q^{m_{j}} x_{j}^{\prime} \boldsymbol{e}_{\boldsymbol{j}}\right)$, $\boldsymbol{x}^{\prime}\in \mathbb{Z}^n$.

	Next, we use mainly Theorem \ref{Lambda_{N}-inverse} to arrive at a result (that is, \eqref{Lambda_{q,M}-inverse,eq1} below) close to the desired one, differing by a translation. 	
	By the popularity principle and Theorem \ref{Lambda_{N}-inverse}, for any $\boldsymbol{x}$ in some set $E\subset\prod_{1 \leq j \leq n}[ \pm 2 N_{j}]$ of density $\gtrsim_{n} \delta$, there exists a positive constant $C=C(n,\boldsymbol{m})$ and an integer $\tilde{q}$ (depending on $\boldsymbol{x}$) with $1\leq \tilde{q}\leq \delta^{-C}$   such that for any $1\leq i\leq n$ and any $1\leq L\leq \delta^{C} M$,
	\begin{equation*}
		\underset{\widehat{\boldsymbol{x^{\prime}}}^{i} \in \prod_{1 \leq j \leq n, j \neq i}\left[ M^{m_{j}}\right]}{\mathbf{E}}\left\|\mathbf{E}\left(\left.\left(f_{i}^{\boldsymbol{x}, q}\right)_{\widehat{\boldsymbol{x^{\prime}}}^{i}} \chi_{\left[2 M^{m_{i}}\right]} \right\rvert \mathcal{B}_{\left(\tilde{q}^{m_{i}} L^{m_{i}}, \tilde{q}^{m_{i}}\right)}\right)\right\|_{2}^{2}\geq \delta^{C} M^{m_{i}}.
	\end{equation*}
	Since there are at most $\delta^{-C}$ possible values for $\tilde{q}$, there exists $1\leq \tilde{q}\leq \delta^{-C}$ independent of $\boldsymbol{x}$ such that
	\begin{equation*}
		\underset{\substack{\boldsymbol{x} \in \prod_{1 \leq j \leq n}\left[ \pm 2 N_{j}\right],\\ \widehat{\boldsymbol{x^{\prime}}}^{i} \in \prod_{1 \leq j \leq n, j \neq i}\left[ M^{m_{j}}\right]}}{\mathbf{E}}\left\|\mathbf{E}\left(\left.\left(f_{i}^{\boldsymbol{x}, q}\right)_{\widehat{\boldsymbol{x^{\prime}}}^{i}} \chi_{\left[2 M^{m_{i}}\right]} \right\rvert \mathcal{B}_{\left(\tilde{q}^{m_{i}} L^{m_{i}}, \tilde{q}^{m_{i}}\right)}\right)\right\|_{2}^{2} \geq \delta^{C} M^{m_{i}}.
	\end{equation*}
Inserting the definition of $f_{i}^{\boldsymbol{x}, q}$ and changing variables yields
	\begin{equation*}
		\begin{split}
			&\underset{\boldsymbol{\widehat{x}}^{i} \in \prod_{1 \leq j \leq n,j\ne i}\left[  N_{j}\right]}{\mathbf{E}}\sum_{x_{i}}
			\left\|\mathbf{E}\left(\left. \left(f_{i}\right)_{\boldsymbol{\widehat{x}}^{i}}\left(x_{i}+q^{m_{i}}(\cdot)\right) \chi_{\left[2 M^{m_{i}}\right]} \right\rvert \mathcal{B}_{\left(\tilde{q}^{m_{i}} L^{m_{i}}, \tilde{q}^{m_{i}}\right)}\right)\right\|_{2}^{2} \\
			&\quad\quad\quad \geq \delta^{C} M^{m_{i}}N_{j}.
		\end{split}
	\end{equation*}
Expanding the $\ell^{2}$-norm via Lemma \ref{property of simple partition}\eqref{property of simple partition,(i)} and changing variables yields
	\begin{align*}
	&\underset{\boldsymbol{\widehat{x}}^{i}}{\mathbf{E}}\enspace L^{m_{i}}\sum_{\substack{s^{\prime} \in \mathbb{Z} \\ x_{i}^{\prime} \in\left(\tilde{q}^{m_{i}} L^{m_{i}}s^{\prime}, \tilde{q}^{m_{i}}L^{m_{i}}s^{\prime} +\tilde{q}^{m_{i}}\right]}} \sum_{x_{i}}\\
	&\quad \quad\left|\underset{0 \leq l<L^{m_{i}}}{\mathbf{E}}\left(f_{i}\right)_{\widehat{\boldsymbol{x}}^{i}}\left(x_{i}+q^{m_{i}} \tilde{q}^{m_{i}} l\right) \chi_{\left[2 M^{m_{i}}\right]}\left(x_{i}^{\prime}+\tilde{q}^{m_{i}} l\right)\right|^{2}\geq \delta^{C} M^{m_{i}}N_{j}
	\end{align*}
	(here and below,  the range $\prod_{1 \leq j \leq n, j \neq i}[  N_{j}]$ for  $\widehat{\boldsymbol{x}}^{i}$ is omitted under the expectation operator for simplicity). Note that for any $0 \leq l < L^{m_i}$ and any $x_{i}^{\prime} \in\left(\tilde{q}^{m_{i}} L^{m_{i}}s^{\prime}, \tilde{q}^{m_{i}}L^{m_{i}}s^{\prime} +\tilde{q}^{m_{i}}\right]$,
	\begin{equation*}
		\chi_{[2 M^{m_i}]}(x_i' + \tilde{q}^{m_i} l) =
		\begin{cases}
			1, & \text{if } 0 \leq s' < \left\lfloor \dfrac{2 M^{m_i}}{\tilde{q}^{m_i} L^{m_i}} \right\rfloor, \\[8pt]
			0, & \text{if } s' < 0 \text{ or } s' \geq \left\lfloor \dfrac{2 M^{m_i}}{\tilde{q}^{m_i} L^{m_i}} \right\rfloor + 1.
		\end{cases}
	\end{equation*}
	We can further ignore the possible bad point for $s^{\prime}$ (since, under its size constraint,  $L$ is relatively small) to obtain
	\begin{equation*}
		\underset{\boldsymbol{\widehat{x}}^{i}}{\mathbf{E}}\enspace \sum_{x_{i}}\left|\underset{0 \leq l<L^{m_{i}}}{\mathbf{E}}\left(f_{i}\right)_{\widehat{\boldsymbol{x}}^{i}}\left(x_{i}+q^{m_{i}} \tilde{q}^{m_{i}} l\right) \right|^{2}\geq \delta^{C} N_{i}.
	\end{equation*}
	Furthermore,
	\begin{align*}
			&\underset{\boldsymbol{\widehat{x}}^{i}}{\mathbf{E}}\!\!\!\sum_{\substack{s \in \mathbb{Z}, 0 \leq k<L^{m_{i}}, \\
			x_{i} \in\left(q^{m_{i}} \tilde{q}^{m_{i}} L^{m_{i}} s, q^{m_{i}} \tilde{q}^{m_{i}} L^{m_{i}} s+q^{m_{i}} \tilde{q}^{m_{i}}\right]}}   \!\!\!\!\! \left|\underset{0 \leq l<L^{m_{i}}}{\mathbf{E}}\left(f_{i}\right)_{\widehat{\boldsymbol{x}}^{i}}\left(x_{i}+q^{m_{i}} \tilde{q}^{m_{i}} k+q^{m_{i}} \tilde{q}^{m_{i}} l\right)\right|^{2}\\
			&\,\geq \delta^{C} N_{i}.
	\end{align*}
	By the pigeonhole principle and Lemma \ref{property of simple partition}\eqref{property of simple partition,(i)}, for some $0\leq k<L^{m_{i}}$,
	\begin{equation}\label{Lambda_{q,M}-inverse,eq1}
		\underset{\boldsymbol{\widehat{x}}^{i}}{\mathbf{E}}\!\left\|\mathbf{E}\left(\left.\left(f_{i}\right)_{\widehat{\boldsymbol{x}}^{i}}\left(\cdot+q^{m_{i}} \tilde{q}^{m_{i}} k\right) \right\rvert \mathcal{B}_{\left(q^{m_{i}} \tilde{q}^{m_{i}} L^{m_{i}}, q^{m_{i}} \tilde{q}^{m_{i}}\right)}\right)\right\|_{2}^{2}\geq \delta^{C} N_{i}.
	\end{equation}
	
	Finally, we ``remove'' the translation $q^{m_{i}} \tilde{q}^{m_{i}} k$ above to complete the proof. Let $L$ take the form $K\tilde{L}$ for some $K\geq 1$ to be specified below. Write $k$ as
	\begin{equation*}
		k=a_{0}+b_{0} \tilde{L}^{m_{i}}, \quad 0 \leq a_{0}<\tilde{L}^{m_{i}}, 0 \leq b_{0}<K^{m_{i}}.
	\end{equation*}
	By Lemma \ref{property of simple partition}\eqref{property of simple partition,(iii)}, if $K$ is large (for instance, if $K= \lfloor \delta^{-C}\rfloor$), then we can eliminate the translation $q^{m_{i}}\tilde{q}^{m_{i}}a_{0}$ to obtain
	\begin{equation*}
		\underset{\boldsymbol{\widehat{x}}^{i}}{\mathbf{E}}\enspace\left\|\mathbf{E}\left(\left.\left(f_{i}\right)_{\widehat{\boldsymbol{x}}^{i}}\left(\cdot+b_{0}q^{m_{i}} \tilde{q}^{m_{i}} \tilde{L}^{m_{i}}\right) \right\rvert \mathcal{B}_{\left(q^{m_{i}} \tilde{q}^{m_{i}} L^{m_{i}}, q^{m_{i}} \tilde{q}^{m_{i}}\right)}\right)\right\|_{2}^{2}
		\geq \delta^{C} N_{i}.
	\end{equation*}
	By refining $\mathcal{B}_{(q^{m_{i}} \tilde{q}^{m_{i}} L^{m_{i}}, q^{m_{i}} \tilde{q}^{m_{i}})}$ into $\mathcal{B}_{(q^{m_{i}} \tilde{q}^{m_{i}} \tilde{L}^{m_{i}}, q^{m_{i}} \tilde{q}^{m_{i}})}$ and using Lemmas \ref{property of conditional expectation}\eqref{property of conditional expectation,(ii)} and \ref{property of simple partition}\eqref{property of simple partition,(ii)}, we then obtain the desired result.
\end{proof}

Based on Theorem \ref{Lambda_{q,M}-inverse}, we can now run the energy increment process to give
$\Lambda_{q,M,\boldsymbol{N};\boldsymbol{m}}\left(f_{0},\ldots, f_{n}\right)$ a ``linear approximation''.

\begin{proposition}\label{energy-increment}
	Given $n\geq 2$ and a strictly increasing $n$-tuple $\boldsymbol{m}$ of positive integers, there exists a constant $C=C\left(n,\boldsymbol{m}\right)>0$ such that the following holds. Let $\delta\in(0,1/10)$, $\boldsymbol{N}\in\mathbb{N}^{n}$ satisfy \eqref{s1-2}, and  $f_{0},\ldots,f_{n}:\mathbb{Z}^{n}\rightarrow\mathbb{C}$ be 1-bounded with support in $\prod_{1\leq j\leq n}\left[N_{j}\right]$. If $N_{n}\geq \exp (\delta^{-C})$, then there exist $q,L\geq 1$ such that $(\delta / 8n)L\geq 1$ and
	\begin{equation*}
		\left|\Lambda_{q,\lfloor \frac{\delta}{8 n} L\rfloor,\boldsymbol{N};\boldsymbol{m}}\left(f_{0}, f_{1}, \ldots, f_{n}\right)-\Lambda_{q,\lfloor \frac{\delta}{8 n} L\rfloor,\boldsymbol{N}; \boldsymbol{m}}\left(f_{0}, F_{1}, \ldots, F_{n}\right)\right| \leq \delta ,
	\end{equation*}
	where
	\begin{equation*}
		F_{i}(\boldsymbol{x})=\mathbf{E}\left(\left.\left(f_{i}\right)_{\widehat{\boldsymbol{x}}^{i}} \right\rvert \mathcal{B}_{\left(q^{m_{i}} L^{m_{i}}, q^{m_{i}}\right)}\right)\left(x_{i}\right), \quad 1 \leq i \leq n .
	\end{equation*}
\end{proposition}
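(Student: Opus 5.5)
We run an energy increment in the style of Green--Tao, iterating Theorem \ref{Lambda_{q,M}-inverse}. We build a sequence of parameters $(q_k,L_k)$, $k=0,1,2,\ldots$, with $q_0=1$ and $L_0=\lfloor N_n^{1/m_n}\rfloor$. At stage $k$ we write $\mathcal{B}^{(k)}_i:=\mathcal{B}_{(q_k^{m_i}L_k^{m_i},q_k^{m_i})}$ and $F_i^{(k)}$ for the corresponding conditional expectations on slices. We track the total energy
\[
\mathcal{E}_k:=\sum_{1\le i\le n}\frac{1}{N_i}\,\underset{\widehat{\boldsymbol{x}}^{i}}{\mathbf{E}}\big\|\mathbf{E}\big((f_i)_{\widehat{\boldsymbol{x}}^{i}}\,\big|\,\mathcal{B}^{(k)}_i\big)\big\|_2^2,
\]
which lies in $[0,n]$.

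At stage $k$, set $M_k=\lfloor \tfrac{\delta}{8n}L_k\rfloor$. If the conclusion fails, then
\[
\Big|\Lambda_{q_k,M_k,\boldsymbol{N};\boldsymbol{m}}(f_0,f_1,\ldots,f_n)-\Lambda_{q_k,M_k,\boldsymbol{N};\boldsymbol{m}}(f_0,F^{(k)}_1,\ldots,F^{(k)}_n)\Big|>\delta.
\]
We telescope this difference into $n$ terms, each with some $g_i:=f_i-F^{(k)}_i$ in slot $i$, and obtain one term with $|\Lambda_{q_k,M_k,\boldsymbol{N};\boldsymbol m}(\ldots,g_i,\ldots)|\ge\delta/n$. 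The other entries are 1-bounded, and $g_i$ is 2-bounded, so we rescale it.

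We then apply Theorem \ref{Lambda_{q,M}-inverse} with $q=q_k$, $M=M_k$. This gives $\tilde q\le\delta^{-C}$ such that for every $L\le\delta^CM_k$,
\[
\underset{\widehat{\boldsymbol{x}}^{i}}{\mathbf{E}}\Big\|\mathbf{E}\big((g_i)_{\widehat{\boldsymbol{x}}^{i}}\chi_{[2N_i]}\,\big|\,\mathcal{B}_{((q_k\tilde q)^{m_i}L^{m_i},(q_k\tilde q)^{m_i})}\big)\Big\|_2^2\ge\delta^CN_i .
\]
Since $g_i$ is supported in $[N_i]$ on slices, the cutoff $\chi_{[2N_i]}$ is harmless.

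Set $q_{k+1}=q_k\tilde q$ and choose $L_{k+1}=\lfloor\delta^{C'}L_k\rfloor$ with $C'$ large. Then $\tilde q^{\,m_j}L_{k+1}^{m_j}\le\delta^{C}L_k^{m_j}$ for every $j$, so by Lemma \ref{property of simple partition}(iv) each $\mathcal{B}^{(k+1)}_j$ is an almost refinement of $\mathcal{B}^{(k)}_j$, with error $O(\delta^{C}N_j)$. Define the common refinement $\mathcal{B}^{(k)}_j\vee\mathcal{B}^{(k+1)}_j$. By Lemma \ref{property of conditional expectation}(ii),(iii), its energy for $f_i$ is at least the old energy plus $\|\mathbf{E}(g_i\mid\mathcal{B}^{(k)}_i\vee\mathcal{B}^{(k+1)}_i)\|_2^2$. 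The latter is $\ge\|\mathbf{E}(g_i\mid\mathcal{B}^{(k+1)}_i)\|_2^2-O(\delta^{C'}N_i)$ by (iv) again.

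Passing back from the join to $\mathcal{B}^{(k+1)}$ costs another $O(\delta^{C'}N_j)$ for each $j$. Also, energies for $j\ne i$ do not drop beyond these errors. Together this gives $\mathcal{E}_{k+1}\ge\mathcal{E}_k+\delta^{C}/2$ once $C'$ is chosen large relative to $C$.

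Since $\mathcal{E}_k\le n$, the iteration stops after at most $K\le 2n\delta^{-C}$ steps. Then $q_K\le\delta^{-CK}$ and $L_K\ge\delta^{C'K}N_n^{1/m_n}-O(1)$.

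To keep the iteration valid we need three things at every stage:
\begin{itemize}
\item $q_kM_k\le q_kL_k\le N_n^{1/m_n}$, which is condition \eqref{range condition};
\item $M_k\ge\delta^{-C}$, the hypothesis of Theorem \ref{Lambda_{q,M}-inverse};
\item $(\delta/8n)L_k\ge1$.
\end{itemize}
For the first, $q_{k+1}L_{k+1}\le\tilde q\,\delta^{C'}q_kL_k\le q_kL_k$ as long as $C'>C$. The second and third follow from $L_K\ge\exp(-C''\delta^{-C}\log(1/\delta))N_n^{1/m_n}$ once $N_n\ge\exp(\delta^{-C_0})$ with $C_0$ large.

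The main obstacle is the bookkeeping of the almost-refinement errors in the energy increment. Each step changes $q$ and shrinks $L$, so the partitions are not nested and monotonicity of $\mathcal{E}_k$ is only approximate. The choice $L_{k+1}=\delta^{C'}L_k$ with $C'$ large is made precisely so that the errors $O(\tilde q^{m_j}L_{k+1}^{m_j}/L_k^{m_j}\cdot N_j)$ from Lemma \ref{property of simple partition}(iv) are negligible against the gain $\delta^C N_i$.
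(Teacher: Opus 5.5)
Your proposal is correct and follows essentially the same energy-increment argument as the paper. The shared steps are: telescoping; applying Theorem~\ref{Lambda_{q,M}-inverse} to $f_i-F_i^{(k)}$; passing to the join $\mathcal{B}^{(k)}_i\vee\mathcal{B}^{(k+1)}_i$ via Lemma~\ref{property of conditional expectation}; and using Lemma~\ref{property of simple partition}(iv), with $L$ shrinking by a factor $\delta^{O(1)}$ each step. Two minor differences: you refine along a single bad index per step rather than multiplying the moduli $p_i$ of all bad indices, and you explicitly account for the small energy losses of the non-incremented indices, which the paper leaves implicit.

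One small correction: on slices $f_i-F_i^{(k)}$ is supported in $[2N_i]$, not $[N_i]$, because $F_i^{(k)}$ can extend past $N_i$. This still makes the cutoff $\chi_{[2N_i]}$ harmless.
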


\begin{proof}
Throughout this proof, every occurrence of $C$ denotes the same constant, whose value is given in Theorem \ref{Lambda_{q,M}-inverse}. We also adopt the convention of omitting the range $\prod_{1 \leq j \leq n, j \neq i}\left[  N_{j}\right]$ for  $\widehat{\boldsymbol{x}}^{i}$ under the expectation operator.

	We start from the trivial partition $\mathcal{B}_{i}^{(0)}:=\mathcal{B}_{\left(N_{i},1\right)}$, $1\leq i\leq n$, and set
	\begin{equation*}
		F_{i}^{(1)}(\boldsymbol{x})=\mathbf{E}\left(\left.\left(f_{i}\right)_{\widehat{\boldsymbol{x}}^{i}} \right\rvert \mathcal{B}_{i}^{(0)}\right)\left(x_{i}\right) \textrm{ and } L_{0}=N_{n}^{1/m_{n}}.
	\end{equation*}
	If
	\begin{equation}\label{energy-increment,eq1}
		\left|\Lambda_{1, \left\lfloor\frac{\delta}{8 n} L_{0}\right\rfloor,\boldsymbol{N}; \boldsymbol{m}}\left(f_{0}, f_{1}, \ldots, f_{n}\right)-\Lambda_{1, \left\lfloor\frac{\delta}{8 n} L_{0}\right\rfloor,\boldsymbol{N};\boldsymbol{m}}\left(f_{0}, F_{1}^{(1)}, \ldots, F_{n}^{(1)}\right)\right| \geq \delta,
	\end{equation}
	we claim that if $L_{0}$ is large, then there exists $1\leq q_{1} \leq \delta^{-nC}$ such that the following inequality
	\begin{equation}\label{s3-1}
		\underset{\widehat{\boldsymbol{x}}^{i}}{\mathbf{E}}\enspace\left\|\mathbf{E}\left(\left.\left(f_{i}\right)_{\widehat{\boldsymbol{x}}^{i}} \right\rvert \mathcal{B}_{i}^{(1)}\right)\right\|_{2}^{2}-\underset{\widehat{\boldsymbol{x}}^{i}}{\mathbf{E}}\enspace\left\|\mathbf{E}\left(\left.\left(f_{i}\right)_{\widehat{\boldsymbol{x}}^{i}} \right\rvert \mathcal{B}_{i}^{(0)}\right)\right\|_{2}^{2}  \geq \frac{1}{2}\delta^{C} N_{i}
	\end{equation}
	holds for at least one index $i\in [n]$, where
	\begin{equation*}
		\mathcal{B}_{i}^{(1)}:=\mathcal{B}_{(q_{1}^{m_{i}}L_{1}^{m_{i}}, q_{1}^{m_{i}})}\text{ with }L_{1}:= \left\lfloor\frac{\delta^{1+(n+1)C}}{2^{4} n} L_{0}\right\rfloor.
	\end{equation*}
		To prove this claim, by a telescoping argument and the triangle inequality, we obtain from \eqref{energy-increment,eq1} that at least one of the $n$ quantities
	\begin{align*}
		&\left|\Lambda_{1, \left\lfloor\frac{\delta}{8 n} L_{0}\right\rfloor,\boldsymbol{N}; \boldsymbol{m}}\left(f_{0}, f_{1}-F_{1}^{(1)}, f_2, f_3, \ldots, f_{n}\right)\right|,\\
		&\left|\Lambda_{1, \left\lfloor\frac{\delta}{8 n} L_{0}\right\rfloor,\boldsymbol{N}; \boldsymbol{m}}\left(f_{0}, F_{1}^{(1)}, f_2-F_{2}^{(1)}, f_3, \ldots, f_{n}\right)\right|,\\
		&\ldots,\\
		&\left|\Lambda_{1, \left\lfloor\frac{\delta}{8 n} L_{0}\right\rfloor,\boldsymbol{N};\boldsymbol{m}}\left(f_{0}, F_{1}^{(1)}, F_{2}^{(1)},\ldots, F_{n-1}^{(1)}, f_{n}-F_{n}^{(1)}\right)\right|
	\end{align*}
	is at least $\delta/n$. Without loss of generality,  we assume that the first $k$ quantities are at least $\delta/n$. By Theorem \ref{Lambda_{q,M}-inverse}, if $L_{0}$ is large, then for any $1\leq i\leq k$, there exists $1\leq p_{i} \leq \delta^{-C}$ such that for any
	\begin{equation*}
			1\leq R_{i}\leq \frac{1}{2^4 n} \delta^{1+C}L_{0},
	\end{equation*}
	we have
	\begin{equation*}
		\underset{\widehat{\boldsymbol{x}}^{i} }{\mathbf{E}}\enspace\left\|\mathbf{E}\left(\left.\left(f_{i}\right)_{\widehat{\boldsymbol{x}}^{i}}-\left(F_{i}^{(1)}\right)_{\widehat{\boldsymbol{x}}^{i}} \right\rvert \mathcal{B}_{\left(p_{i}^{m_{i}} R_{i}^{m_{i}}, p_{i}^{m_{i}}\right)}\right)\right\|_{2}^{2} \geq \delta^{C} N_{i}.
	\end{equation*}
Denote 	
	\begin{equation*}
		q_{1}=\prod_{1 \leq j \leq k} p_{j} \leq\delta^{-nC}
	\end{equation*}
	and require $R_{i}$, $1\leq i\leq k$, to take the form
	\begin{equation*}
		R_{i}=\bigg(\prod_{1 \leq j \leq k, j \neq i} p_{j}\bigg) L_{1} \textrm{ with } L_{1}:=\left\lfloor\frac{\delta^{1+(n+1)C}}{2^{4} n} L_{0}\right\rfloor.
	\end{equation*}
	Then, by Lemma \ref{property of conditional expectation}\eqref{property of conditional expectation,(ii)}, for any $1\leq i\leq k$,
	\begin{equation*}
		\underset{\widehat{\boldsymbol{x}}^{i}}{\mathbf{E}}\enspace\left\|\mathbf{E}\left(\left.\left(f_{i}\right)_{\widehat{\boldsymbol{x}}^{i}}-\left(F_{i}^{(1)}\right)_{\widehat{\boldsymbol{x}}^{i}} \right\rvert \mathcal{B}_{i}^{(1)}\right)\right\|_{2}^{2} \geq \delta^{C} N_{i}
	\end{equation*}
	with $\mathcal{B}_{i}^{(1)}$ defined as above. By Lemma  \ref{property of conditional expectation}, we further obtain
	\begin{equation*}
		\underset{\widehat{\boldsymbol{x}}^{i}}{\mathbf{E}}\enspace\left\|\mathbf{E}\left(\left.\left(f_{i}\right)_{\widehat{\boldsymbol{x}}^{i}} \right\rvert \mathcal{B}_{i}^{(0)}\vee\mathcal{B}_{i}^{(1)}\right)\right\|_{2}^{2}-\underset{\widehat{\boldsymbol{x}}^{i}}{\mathbf{E}}\enspace\left\|\mathbf{E}\left(\left.\left(f_{i}\right)_{\widehat{\boldsymbol{x}}^{i}} \right\rvert \mathcal{B}_{i}^{(0)}\right)\right\|_{2}^{2}
		\geq \delta^{C} N_{i}.
	\end{equation*}
	Since $q_{1}L_{1}\leq 2^{-4}\delta^{C} L_{0}$, we know from Lemma \ref{property of simple partition}\eqref{property of simple partition,(iv)} that
	\begin{equation*}
		\underset{\widehat{\boldsymbol{x}}^{i}}{\mathbf{E}}\enspace\left\|\mathbf{E}\left(\left.\left(f_{i}\right)_{\widehat{\boldsymbol{x}}^{i}} \right\rvert \mathcal{B}_{i}^{(0)}\vee\mathcal{B}_{i}^{(1)}\right)\right\|_{2}^{2}-\underset{\widehat{\boldsymbol{x}}^{i}}{\mathbf{E}}\enspace\left\|\mathbf{E}\left(\left.\left(f_{i}\right)_{\widehat{\boldsymbol{x}}^{i}} \right\rvert \mathcal{B}_{i}^{(1)}\right)\right\|_{2}^{2}
		\leq \frac{1}{2}\delta^{C} N_{i}.
	\end{equation*}
Combining the two inequalities above, we obtain the inequality \eqref{s3-1} for any $1\leq i\leq k$. This proves the claim.
	
We proceed by iterating the above step. To demonstrate precisely how this is done, we show a few further iterations in detail. We set $F_{i}^{(2)}(\boldsymbol{x})=\mathbf{E}(\left(f_{i}\right)_{\widehat{\boldsymbol{x}}^{i}} \rvert \mathcal{B}_{i}^{(1)})\left(x_{i}\right)$, $1\leq i\leq n$. If 	
	\begin{equation*}
		\left|\Lambda_{q_{1},\left\lfloor \frac{\delta}{8 n} L_{1}\right\rfloor,\boldsymbol{N};\boldsymbol{m}}\left(f_{0}, f_{1}, \ldots, f_{n}\right)-\Lambda_{q_{1},\left\lfloor \frac{\delta}{8 n} L_{1}\right\rfloor,\boldsymbol{N}; \boldsymbol{m}}\left(f_{0}, F_{1}^{(2)}, \ldots, F_{n}^{(2)}\right)\right|
		\geq \delta,
	\end{equation*}
	then we argue similarly to obtain (note that the support of $F_{i}^{(2)}$ lies in $\left[2N_{i}\right]$) that  if $L_{1}$ is large, then there exists $1\leq q_{2} \leq \delta^{-nC}$
	such that the following inequality
	\begin{equation*}
		\underset{\widehat{\boldsymbol{x}}^{i}}{\mathbf{E}}\enspace\left\|\mathbf{E}\left(\left.\left(f_{i}\right)_{\widehat{\boldsymbol{x}}^{i}} \right\rvert \mathcal{B}_{i}^{(2)}\right)\right\|_{2}^{2}-\underset{\widehat{\boldsymbol{x}}^{i}}{\mathbf{E}}\enspace\left\|\mathbf{E}\left(\left.\left(f_{i}\right)_{\widehat{\boldsymbol{x}}^{i}} \right\rvert \mathcal{B}_{i}^{(1)}\right)\right\|_{2}^{2}  \geq \frac{1}{2}\delta^{C} N_{i}
	\end{equation*}
 holds for at least one index $i\in [n]$, where
 \begin{equation*}
	\mathcal{B}_{i}^{(2)}:=\mathcal{B}_{\left(q_{1}^{m_{i}} q_{2}^{m_{i}} L_{2}^{m_{i}}, q_{1}^{m_{i}} q_{2}^{m_{i}}\right)} \textrm{ with }	 L_{2}:=\left \lfloor\frac{\delta^{1+(n+1)C}}{2^{4} n}  L_{1}\right\rfloor.
\end{equation*}

	Next, we set $F_{i}^{(3)}(\boldsymbol{x})=\mathbf{E}(\left(f_{i}\right)_{\widehat{\boldsymbol{x}}^{i}} \rvert \mathcal{B}_{i}^{(2)})\left(x_{i}\right)$, $1\leq i\leq n$. If 	
	\begin{align*}
		&\left|\Lambda_{q_{1}q_{2},\left\lfloor \frac{\delta}{8 n} L_{2}\right\rfloor,\boldsymbol{N};\boldsymbol{m}}\left(f_{0}, f_{1}, \ldots, f_{n}\right)-\Lambda_{q_{1}q_{2},\left\lfloor \frac{\delta}{8 n} L_{2}\right\rfloor,\boldsymbol{N}; \boldsymbol{m}}\left(f_{0}, F_{1}^{(3)}, \ldots, F_{n}^{(3)}\right)\right|\\
		&\geq \delta,
	\end{align*}
	then we obtain that if $L_{2}$ is large, then there exists $1\leq q_{3} \leq \delta^{-nC}$
	such that the following inequality
	\begin{equation*}
		\underset{\widehat{\boldsymbol{x}}^{i}}{\mathbf{E}}\enspace\left\|\mathbf{E}\left(\left.\left(f_{i}\right)_{\widehat{\boldsymbol{x}}^{i}} \right\rvert \mathcal{B}_{i}^{(3)}\right)\right\|_{2}^{2}-\underset{\widehat{\boldsymbol{x}}^{i}}{\mathbf{E}}\enspace\left\|\mathbf{E}\left(\left.\left(f_{i}\right)_{\widehat{\boldsymbol{x}}^{i}} \right\rvert \mathcal{B}_{i}^{(2)}\right)\right\|_{2}^{2}  \geq \frac{1}{2}\delta^{C} N_{i}
	\end{equation*}
 holds for at least one index $i\in [n]$, where
		\begin{equation*}
		\mathcal{B}_{i}^{(3)}:=\mathcal{B}_{\left(q_{1}^{m_{i}} q_{2}^{m_{i}}q_{3}^{m_{i}} L_{3}^{m_{i}}, q_{1}^{m_{i}} q_{2}^{m_{i}}q_{3}^{m_{i}}\right)} \text{ with }L_{3}:= \left\lfloor\frac{\delta^{1+(n+1)C}}{2^{4} n}  L_{2}\right\rfloor.
	\end{equation*}
	
	Limited by the natural upper bound of energy,
	the process above can be iterated at most $s:=n \lfloor 2 \delta^{-C}  \rfloor$ times. Thus, at the final stage, we are guaranteed to obtain
	\begin{equation*}
		\begin{split}
			&\left|\Lambda_{q_{1} \cdots q_{s},\left\lfloor\!\frac{\delta}{8 n} L_{s}\!\right\rfloor,\boldsymbol{N}; \boldsymbol{m}}\!\left(\!f_{0}, f_{1}, \ldots, f_{n}\right)\!-\!\Lambda_{q_{1} \cdots q_{s},\left\lfloor\!\frac{\delta}{8 n} L_{s}\!\right\rfloor,\boldsymbol{N}; \boldsymbol{m}}\!\left(\!f_{0}, F_{1}^{(s+1)}\!,\ldots, F_{n}^{(s+1)}\!\right)\!\right| \\
			&\leq \delta,
		\end{split}
	\end{equation*}
	where $1\leq q_{1},\ldots,q_{s}\leq \delta^{-nC}$,
	\begin{equation*}
		L_{i+1}:=\left\lfloor\frac{\delta^{1+(n+1)C}}{2^{4} n} L_{i}\right\rfloor,\quad 1 \leq i \leq s-1,
	\end{equation*}
	and
	\begin{equation*}
		F_{i}^{(s+1)}(\boldsymbol{x})=\mathbf{E}\left(\left.\left(f_{i}\right)_{\widehat{\boldsymbol{x}}^{i}} \right\rvert \mathcal{B}_{\left(q_{1}^{m_{i}} \ldots q_{s}^{m_{i}} L_{s}^{m_{i}}, q_{1}^{m_{i}} \ldots q_{s}^{m_{i}}\right)}\right)\left(x_{i}\right), \quad   1 \leq i \leq s.
	\end{equation*}
For $N_{n}$ sufficiently large, we have $(\delta / 8n)L_{s}\geq 1$. The size requirement for $N_{n}$ to ensure this is derived from a simple computation.
\end{proof}

\begin{remark}
	It follows from the proof above that $q$ and $L$ appearing in the statement of Proposition \ref{energy-increment} satisfy
	\begin{equation*}	
		1\leq q \leq \exp \left(\delta^{-C}\right),\quad L \geq \left\lfloor\exp \left(-\delta^{-C}\right) N_{n}^{1/m_{n}}\right\rfloor.
	\end{equation*}
\end{remark}

In order to establish a lower bound for this ``linear approximation'' in Proposition \ref{energy-increment}, we need an elementary inequality.

\begin{lemma}\label{roy's inequality}
	Let $n\geq 1$, $E_{1},\ldots,E_{n}$ be non-empty finite sets, and  $A=\prod_{1 \leq i \leq n} E_{i}$ be the corresponding product set. Then for any function $f: A \rightarrow[0,1]$, we have
	\begin{equation*}
		\underset{\boldsymbol{x}, \boldsymbol{x}^{\prime} \in A}{\mathbf{E}} f(\boldsymbol{x}) \prod_{1 \leq j \leq n} f\left(\boldsymbol{x}+\left(x_{j}^{\prime}-x_{j}\right) \boldsymbol{e}_{\boldsymbol{j}}\right) \gtrsim_{n}\left(\underset{\boldsymbol{x} \in A}{\mathbf{E}} f(\boldsymbol{x})\right)^{n+1}.
	\end{equation*}		
\end{lemma}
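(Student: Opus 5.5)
The plan is to integrate out the primed variables first, which turns the left-hand side into a single average of $f$ times $n$ ``line averages''. Then a pigeonholing (dependent random choice) step shows that $f$ puts most of its mass where all line averages are large.

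\emph{Reduction.} Write $\boldsymbol{x}^{(j)}(y)$ for the point obtained from $\boldsymbol{x}$ by replacing its $j$-th coordinate by $y\in E_j$, so that $\boldsymbol{x}+(x_j'-x_j)\boldsymbol{e_j}=\boldsymbol{x}^{(j)}(x_j')$. For fixed $\boldsymbol{x}$, the $j$-th factor depends only on $x_j'$. Since $\boldsymbol{x}'$ is uniform on the product set $A$, its coordinates are independent and uniform on the $E_j$. Hence the left-hand side equals
\begin{equation*}
\underset{\boldsymbol{x}\in A}{\mathbf{E}} f(\boldsymbol{x})\prod_{1\le j\le n} g_j(\boldsymbol{x}),\qquad g_j(\boldsymbol{x}):=\underset{y\in E_j}{\mathbf{E}} f\big(\boldsymbol{x}^{(j)}(y)\big).
\end{equation*}
The key structural fact is that $g_j$ is constant along lines in direction $\boldsymbol{e_j}$. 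Consequently, for any function $h$ that is constant along such lines, $\mathbf{E}_{\boldsymbol{x}} f h=\mathbf{E}_{\boldsymbol{x}} g_j h$.

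\emph{Pigeonholing.} Let $\mu:=\mathbf{E}_{\boldsymbol{x}\in A}f(\boldsymbol{x})$, and define the bad set
\begin{equation*}
B_j:=\{\boldsymbol{x}: g_j(\boldsymbol{x})<\mu/(2n)\}.
\end{equation*}
Its indicator is constant along $\boldsymbol{e_j}$-lines, so
\begin{equation*}
\mathbf{E}_{\boldsymbol{x}} f\,\mathbf{1}_{B_j}=\mathbf{E}_{\boldsymbol{x}} g_j\,\mathbf{1}_{B_j}\le \mu/(2n).
\end{equation*}
Summing over $j$, the good set $S:=A\setminus\bigcup_j B_j$ satisfies
\begin{equation*}
\mathbf{E}_{\boldsymbol{x}} f\,\mathbf{1}_S\ge \mu-n\cdot\mu/(2n)=\mu/2.
\end{equation*}

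\emph{Conclusion.} All terms are nonnegative because $f\ge0$. Therefore
\begin{equation*}
\mathbf{E}_{\boldsymbol{x}} f\prod_j g_j\ \ge\ \mathbf{E}_{\boldsymbol{x}} f\,\mathbf{1}_S\prod_j g_j\ \ge\ (\mu/2n)^n\cdot \mu/2\ \gtrsim_n \mu^{n+1}.
\end{equation*}

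I do not expect a real obstacle. The only point needing care is the identity $\mathbf{E} f\mathbf{1}_{B_j}=\mathbf{E} g_j\mathbf{1}_{B_j}$. It uses the product structure of $A$: averaging over an $\boldsymbol{e_j}$-line through $\boldsymbol{x}$ is exactly the average defining $g_j$, and the uniform measure on $A$ disintegrates into uniform measures on these lines.
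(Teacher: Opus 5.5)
Your argument is correct, and it takes a different route from the paper's. The paper gives no self-contained proof. It cites \cite[Lemma 3.1]{HLY21} for $n=2$ and says the general case follows ``similarly by induction on $n$''.

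You treat all $n$ at once. Integrating out $\boldsymbol{x}'$ is legitimate, since the uniform measure on the product set $A$ makes the $x_j'$ independent, and it reduces the left-hand side to $\mathbf{E}_{\boldsymbol{x}} f\prod_j g_j$. The single identity $\mathbf{E}_{\boldsymbol{x}} fh=\mathbf{E}_{\boldsymbol{x}} g_jh$, for $h$ constant on $\boldsymbol{e_j}$-lines, then shows that $f$ puts at most $\mu/(2n)$ of its mass on each $B_j$. This yields the explicit constant $2^{-(n+1)}n^{-n}$.

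What your route buys is a self-contained proof that avoids the bookkeeping an induction would need. When one freezes $x_n$, the factor $g_n$ becomes a weight on the $(n-1)$-dimensional slices that is not constant along their lines. It must therefore be truncated or built into a weighted inductive hypothesis; your pigeonholing in effect does all these truncations at once. The only cost is the constant, which is irrelevant here, since Lemma \ref{roy's inequality,corollary} and Proposition \ref{linearization} only use the existence of some $c_n>0$.

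Incidentally, your identity also gives the sharp constant $1$. Set $f/g_j:=0$ where $g_j=0$; $f$ vanishes there as well. Hölder's inequality applied to $f=(f\prod_j g_j)^{1/(n+1)}\prod_j(f/g_j)^{1/(n+1)}$ gives $\mu^{n+1}\le\mathbf{E}(f\prod_j g_j)\prod_j\mathbf{E}(f/g_j)$. Taking $h=\mathbf{1}_{g_j>0}/g_j$ in your identity gives $\mathbf{E}(f/g_j)=\mathbf{E}\,\mathbf{1}_{g_j>0}\le 1$, and the sharp bound follows.
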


\begin{proof}
For the proof, see \cite[Lemma 3.1]{HLY21} for the case $n=2$, while the general case is established similarly by induction on $n$.
\end{proof}

Lemma \ref{roy's inequality} yields the following corollary, which will be readily employed in our subsequent arguments. See \cite[Lemma 5.3]{SW25} for a similar result in one dimension.

\begin{lemma}\label{roy's inequality,corollary}
	Let $n\geq 1$, $N_{i},q_{i},L_{i}\geq 1$ for all $1\leq i\leq n$, and  $A=\prod_{1\leq j\leq n}\left[N_{j}\right]$. Then for any function $f:A\rightarrow[0,1]$, we have
	\begin{equation}\label{roy's inequality,corollary,eq1}
		\underset{\boldsymbol{x} \in A}{\mathbf{E}} f(\boldsymbol{x})\! \prod_{1 \leq i \leq n} \mathbf{E}\left(\left.f_{\widehat{\boldsymbol{x}}^{i}} \right\rvert \mathcal{B}_{\left(q_{i} L_{i}, q_{i}\right)}\right)\left(x_{i}\right) \gtrsim_{n}\left(\underset{x \in A}{\mathbf{E}} f(\boldsymbol{x})\right)^{n+1}.
	\end{equation}
\end{lemma}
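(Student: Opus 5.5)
The plan is to deduce \eqref{roy's inequality,corollary,eq1} from Lemma \ref{roy's inequality}, applied cell by cell on a product partition of $\mathbb{Z}^n$, and then to recombine the cells with Jensen's inequality. First I extend $f$ by zero outside $A$, so that each slice $f_{\widehat{\boldsymbol{x}}^{i}}$ is a $[0,1]$-valued function on $\mathbb{Z}$. I will use the implicit normalisation $q_iL_i\le N_i$ for every $i$, which holds in all applications of the lemma in this paper. Some such condition is genuinely needed: for $n=1$, $f\equiv 1$, $q_1=N_1$ and $L_1$ huge, the left-hand side equals $1/L_1$ while the right-hand side equals $1$.

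For each $i$, let $\mathcal{P}_i$ be the set of atoms of $\mathcal{B}_{(q_iL_i,q_i)}$ that meet $[N_i]$. Each such atom $P$ is a progression of length exactly $L_i$. The cells are the products $\widetilde{Q}=P_1\times\cdots\times P_n$ with $P_i\in\mathcal{P}_i$, and they cover $A$. For $\boldsymbol{x}\in\widetilde{Q}$ the definition of conditional expectation gives
\begin{equation*}
\mathbf{E}\bigl(f_{\widehat{\boldsymbol{x}}^{i}}\bigm|\mathcal{B}_{(q_iL_i,q_i)}\bigr)(x_i)=\underset{y_i\in P_i}{\mathbf{E}}\, f\bigl(\boldsymbol{x}+(y_i-x_i)\boldsymbol{e_i}\bigr).
\end{equation*}
Summing over $\boldsymbol{x}\in\widetilde{Q}$, the left-hand side of \eqref{roy's inequality,corollary,eq1} becomes
\begin{equation*}
\sum_{\widetilde{Q}}\frac{|\widetilde{Q}|}{|A|}\underset{\boldsymbol{x},\boldsymbol{x}'\in\widetilde{Q}}{\mathbf{E}} f(\boldsymbol{x})\prod_{1\le j\le n} f\bigl(\boldsymbol{x}+(x'_j-x_j)\boldsymbol{e_j}\bigr).
\end{equation*}
Here we use that $f=0$ off $A$, so points of $\widetilde{Q}$ outside $A$ contribute nothing. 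Applying Lemma \ref{roy's inequality} to each product set $\widetilde{Q}$ bounds this from below by $\gtrsim_n \sum_{\widetilde{Q}}\frac{|\widetilde{Q}|}{|A|}\,b_{\widetilde{Q}}^{\,n+1}$, where $b_{\widetilde{Q}}:=\mathbf{E}_{\boldsymbol{x}\in\widetilde{Q}}f(\boldsymbol{x})$.

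Next let $T=\sum_{\widetilde{Q}}|\widetilde{Q}|$. By convexity of $t\mapsto t^{n+1}$ (Jensen with weights $|\widetilde{Q}|/T$), together with $\sum_{\widetilde{Q}}|\widetilde{Q}|\,b_{\widetilde{Q}}=\sum_{A}f$, we get
\begin{equation*}
\sum_{\widetilde{Q}}\frac{|\widetilde{Q}|}{|A|}b_{\widetilde{Q}}^{\,n+1}\ \ge\ \frac{T}{|A|}\Bigl(\frac{\sum_A f}{T}\Bigr)^{n+1}=\Bigl(\frac{|A|}{T}\Bigr)^{n}\bigl(\mathbf{E}_{A}f\bigr)^{n+1}.
\end{equation*}

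It therefore remains to show $T\lesssim_n |A|$, and this is the only step needing care; it is exactly where the size of the atoms matters. Since $T=\prod_i\sum_{P\in\mathcal{P}_i}|P|$, it suffices to treat one coordinate. The atoms meeting $[N_i]$ lie in at most $N_i/(q_iL_i)+2$ consecutive blocks of length $q_iL_i$, and each block is the union of $q_i$ atoms of size $L_i$. Hence
\begin{equation*}
\sum_{P\in\mathcal{P}_i}|P|\le q_iL_i\Bigl(\frac{N_i}{q_iL_i}+2\Bigr)\le 3N_i
\end{equation*}
when $q_iL_i\le N_i$. This gives $T\le 3^n|A|$, and combining the three displays yields \eqref{roy's inequality,corollary,eq1} with implied constant depending only on $n$.
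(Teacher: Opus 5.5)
Your proof is correct and follows the paper's argument: decompose into product cells of atoms, apply Lemma \ref{roy's inequality} on each cell, and recombine with Jensen's inequality. Your explicit bound $T\le 3^n|A|$ under $q_iL_i\le N_i$ supplies what the paper's Jensen step uses without comment, namely that $T/|A|$ is bounded in terms of $n$. Your counterexample correctly shows that some such hypothesis is necessary, and it does hold where the lemma is used: there $q_iL_i=(qL)^{m_i}$ with $qL\le N_n^{1/m_n}$, coming from the construction in Proposition \ref{energy-increment}.
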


\begin{proof}
	The left-hand side of \eqref{roy's inequality,corollary,eq1} can be expanded fully as
	\begin{equation*}
		\begin{split}
			\frac{\prod_{1 \leq j \leq n} L_{j}}{\prod_{1 \leq j \leq n} N_{j}}\!\!\!\!\! &\sum_{\substack{\boldsymbol{s}\in \mathbb{Z}^{n},\\ \boldsymbol{x}\in \prod_{1 \leq i \leq n}\left(q_{i} L_{i} s_{i}, q_{i} L_{i} s_{i}+q_{i}\right]}} \underset{\boldsymbol{k,l}\in\prod_{1 \leq i \leq n}\left[0,L_{i}\right)}{\mathbf{E}}\\
			&f\Bigg(\boldsymbol{x}+\sum_{1 \leq i\leq n} q_{i} k_{i} \boldsymbol{e_{i}}\Bigg) \prod_{1 \leq i \leq n} f\Bigg(\boldsymbol{x}+\sum_{1 \leq j \leq n, j \neq i} q_{j} k_{j} \boldsymbol{e_{j}}+q_{i} l_{i} \boldsymbol{e_{i}}\Bigg).
		\end{split}
	\end{equation*}
Then, by Lemma \ref{roy's inequality} and Jensen's inequality, it is
	\begin{equation*}
		\begin{split}
			&\gtrsim_{n} \frac{\prod_{1 \leq j \leq n} L_{j}}{\prod_{1 \leq j \leq n} N_{j}}\!\!\!\!\sum_{\substack{\boldsymbol{s}\in \mathbb{Z}^{n},\\ \boldsymbol{x}\in \prod_{1 \leq i \leq n}\left(q_{i} L_{i} s_{i}, q_{i} L_{i} s_{i}+q_{i}\right]}}\!\!\!\!\Biggl(\!\underset{\boldsymbol{k}\in\prod_{1 \leq i \leq n}\left[0,L_{i}\right)}{\mathbf{E}} \!f\Biggl(\!\boldsymbol{x}+\sum_{1 \leq i \leq n} q_{i} k_{i} \boldsymbol{e}_{i}\!\!\Biggr)\!\Biggr)^{\!\! n+1}\\
			&\geq \Biggl(\underset{\boldsymbol{k}\in\prod_{1 \leq i \leq n}\left[0,L_{i}\right)}{\mathbf{E}} \frac{\prod_{1 \leq j \leq n} L_{j}}{\prod_{1 \leq j \leq n} N_{j}}\!\!\!\! \sum_{\substack{\boldsymbol{s}\in \mathbb{Z}^{n},\\ \boldsymbol{x}\in \prod_{1 \leq i \leq n}\left(q_{i} L_{i} s_{i}, q_{i} L_{i} s_{i}+q_{i}\right]}}\!\!\!\!\!\!f\Biggl(\boldsymbol{x}+\sum_{1 \leq i \leq n} q_{i} k_{i} \boldsymbol{e}_{i}\Biggr)\!\Biggr)^{\!\!n+1}\\
			&=\left(\underset{x \in A}{\mathbf{E}} f(\boldsymbol{x})\right)^{n+1},
		\end{split}
	\end{equation*}
	as desired.
\end{proof}

We now provide a lower bound for the ``linear approximation''.

\begin{proposition}\label{linearization}
	Let $n\geq 1$ and $\boldsymbol{m}\in\mathbb{N}^{n}$. Let  $\delta\in(0,1)$, $q,M,L\geq 1$ and $\boldsymbol{N}\in\mathbb{N}^{n}$ such that \eqref{range condition} holds and  $M \leq(\delta / 8n) L$. Then for any function $f:\mathbb{Z}^{n}\rightarrow[0,1]$ with support in $\prod_{1 \leq i \leq n}\left[N_{i}\right]$, we have
	\begin{equation*}
		\Lambda_{q, M, \boldsymbol{N};\boldsymbol{m}}\left(f, F_{1}, \ldots, F_{n}\right) \geq c_{n}\left(\underset{\boldsymbol{x} \in\prod_{1 \leq i \leq n}\left[N_{i}\right]}{\mathbf{E}} f(\boldsymbol{x})\right)^{n+1}-\frac{1}{2}\delta
	\end{equation*}
	for some constant $c_{n}>0$, where
	\begin{equation*}
		F_{i}(\boldsymbol{x}):=\mathbf{E}\left(\left.f_{\widehat{\boldsymbol{x}}^{i}} \right\rvert \mathcal{B}_{\left(q^{m_{i}} L^{m_{i}}, q^{m_{i}}\right)}\right)\left(x_{i}\right), \quad 1 \leq i \leq n.
	\end{equation*}
\end{proposition}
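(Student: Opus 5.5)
The plan is to show that each $F_i$ is almost invariant under the shift $q^{m_i}r^{m_i}\boldsymbol{e_i}$ for $r\in[M]$. This replaces $\Lambda_{q,M,\boldsymbol{N};\boldsymbol{m}}(f,F_1,\ldots,F_n)$ by the ``linear'' average $\mathbf{E}_{\boldsymbol{x}} f(\boldsymbol{x})\prod_i F_i(\boldsymbol{x})$, up to an error of at most $\delta/2$, and Lemma \ref{roy's inequality,corollary} then bounds that average from below.

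\textbf{Step 1 (shift invariance).} Fix $i$, a point $\boldsymbol{x}$, and $r\in[M]$, and set $h=q^{m_i}r^{m_i}$. Since $F_i(\cdot)$ is constant on each atom $\{q^{m_i}L^{m_i}s+q^{m_i}k+a:0\le k<L^{m_i}\}$ of $\mathcal{B}_{(q^{m_i}L^{m_i},q^{m_i})}$, and $h$ is a multiple of $q^{m_i}$, we get $F_i(\boldsymbol{x}+h\boldsymbol{e_i})=F_i(\boldsymbol{x})$ unless $x_i$ lies within distance $h$ (in the $q^{m_i}$-step direction) of the end of its atom. For fixed $\widehat{\boldsymbol{x}}^{i}$, the proportion of such bad $x_i$, among $x_i$ in a window of length $\gg q^{m_i}L^{m_i}$, is at most $r^{m_i}/L^{m_i}\le (M/L)^{m_i}\le M/L\le \delta/(8n)$, using $M\le L$ and $m_i\ge1$.

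\textbf{Step 2 (boundary effects).} Counting bad points over $x_i\in[N_i]$ needs care at the edges, because an atom may stick out of $[N_i]$. The bad set is a union over atoms meeting $[N_i]$, each contributing at most $r^{m_i}q^{m_i}$ bad points. The number of atoms meeting $[N_i]$ is at most $q^{m_i}(N_i/(q^{m_i}L^{m_i})+2)$. By \eqref{range condition}, $q^{m_i}M^{m_i}\le N_i$, so the bad count is at most
\begin{equation*}
N_i\,\frac{M^{m_i}}{L^{m_i}}+2q^{m_i}M^{m_i}\cdot q^{m_i}.
\end{equation*}
The second term is the delicate one. I would redo the count more carefully: the bad points in residue class $a$ mod $q^{m_i}$ number at most $r^{m_i}$ per atom in that class, and there are at most $N_i/(q^{m_i}L^{m_i})+2$ such atoms per class. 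Summing over the $q^{m_i}$ classes gives $N_i(M/L)^{m_i}+2q^{m_i}M^{m_i}$. The second term can be as large as $2N_i$ when $qM$ is close to $N_i^{1/m_i}$, so extra care is needed here, and this is the main obstacle. I would handle it by using instead that $F_i$ agrees with the shifted function except near atom ends inside the support. Outside $[N_i]$ (near the right edge, say) the factor $f(\boldsymbol{x})$ vanishes; $f$ is supported in $[N_i]$, and only the final partial atom straddling $N_i$ matters. Since $F_i$ is an average of $f$ over atoms, this contributes at most the $M^{m_i}q^{m_i}$ points near $N_i$ in the last atom. I expect that, with $L\ge 8nM/\delta$, this is dominated by $(\delta/8n)N_i$ after comparing with atom length. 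If it is not, I would enlarge the comparison by noting that on the truncated last atom the relevant values are still constant, so no genuine error occurs there.

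\textbf{Step 3 (telescoping and conclusion).} Replace $F_1,\ldots,F_n$ one at a time by their unshifted versions. Each $F_j$ is $[0,1]$-valued, so each replacement changes $\Lambda$ by at most the density of bad $(\boldsymbol{x},r)$, which is at most $\delta/(4n)$. The total error is therefore at most $\delta/2$, and
\begin{equation*}
\Lambda_{q,M,\boldsymbol{N};\boldsymbol{m}}(f,F_1,\ldots,F_n)\ge \underset{\boldsymbol{x}\in\prod[N_i]}{\mathbf{E}} f(\boldsymbol{x})\prod_{i}F_i(\boldsymbol{x})-\tfrac12\delta.
\end{equation*}
Finally, apply Lemma \ref{roy's inequality,corollary} with $q_i=q^{m_i}$ and $L_i=L^{m_i}$. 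Its left-hand side involves conditional expectations of $f$ restricted to $A$, which coincide with $F_i$ because $f$ is supported in $A$. This yields $c_n(\mathbf{E}f)^{n+1}$.
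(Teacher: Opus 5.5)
Your route is the same as the paper's: near-invariance of $F_i$ under the shift $q^{m_i}r^{m_i}\boldsymbol{e_i}$, telescoping at total cost at most $\delta/2$, then Lemma \ref{roy's inequality,corollary} with $q_i=q^{m_i}$ and $L_i=L^{m_i}$. Steps 1 and 3 are fine.

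The gap is Step 2, which you flag but do not close. Your count leaves the additive term $2q^{m_i}M^{m_i}$, and \eqref{range condition} only bounds this by $2N_i$. Your proposed remedies are an expectation plus a false claim. On the last atom meeting $[N_i]$ there \emph{is} a genuine error: for $x_i$ among its last $r^{m_i}$ elements, the point $x_i+q^{m_i}r^{m_i}$ lies in the next atom of the same residue class. There $F_i$ may vanish while $F_i(\boldsymbol{x})>0$. So the bound ``bad density $\le\delta/(4n)$'' that Step 3 relies on is not established as written.

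The fix is short. Your count misses that the blocks $(Qs,Q(s+1)]$, with $Q:=q^{m_i}L^{m_i}$, are anchored at $0$, while $x_i$ ranges over $[N_i]$. Such an $x_i$ is bad only if $x_i\in(tQ-h,tQ]$ for some integer $t\ge1$ with $tQ<N_i+h$, where $h:=q^{m_i}r^{m_i}\le(qM)^{m_i}\le N_i$ by \eqref{range condition}. There are fewer than $(N_i+h)/Q\le 2N_i/Q$ such $t$, hence at most $2N_i(r/L)^{m_i}$ bad points, with no additive term.

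Alternatively, bound the $\ell^1$ error directly; this needs neither the anchoring nor \eqref{range condition}. Write $a_A$ for the average of $f_{\widehat{\boldsymbol{x}}^{i}}$ over an atom $A$, and $A'$ for the next atom in the same residue class. Exactly $r^{m_i}$ points of $A$ are shifted into $A'$ (since $r\le M<L$), and $A\mapsto A'$ is a bijection, so
\begin{equation*}
\sum_{x_i\in\mathbb{Z}}\bigl|F_i(\boldsymbol{x}+h\boldsymbol{e_i})-F_i(\boldsymbol{x})\bigr|=\sum_{A}r^{m_i}\left|a_{A'}-a_A\right|\le 2r^{m_i}\sum_{A}a_A=2\Bigl(\frac{r}{L}\Bigr)^{m_i}\bigl\|f_{\widehat{\boldsymbol{x}}^{i}}\bigr\|_{1}\le 2\Bigl(\frac{M}{L}\Bigr)^{m_i}N_i .
\end{equation*}
Either way, each replacement in your telescoping costs at most $2(M/L)^{m_i}\le\delta/(4n)$, which is exactly what Step 3 needs.
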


\begin{proof}
	Note that for any fixed $1\leq i\leq n$, $1\leq r\leq M$, $s\in\mathbb{Z}$, and $\widehat{\boldsymbol{x}}^{i} \in \prod_{1 \leq j \leq n, j \neq i}\left[N_{j}\right]$, there are at most $q^{m_{i}}r^{m_{i}}$ ``bad'' integers $x_{i}$ in the interval $\left(q^{m_{i}}L^{m_{i}} s, q^{m_{i}}L^{m_{i}}(s+1)\right]$ such that
	\begin{equation*}
		\mathbf{E}\left(\left.f_{\widehat{\boldsymbol{x}}^{i}} \right\rvert \mathcal{B}_{\left(q^{m_{i}} L^{m_{i}}, q^{m_{i}}\right)}\right)\left(x_{i}+q^{m_{i}} r^{m_{i}}\right)\ne\mathbf{E}\left(\left.f_{\widehat{\boldsymbol{x}}^{i}} \right\rvert \mathcal{B}_{\left(q^{m_{i}} L^{m_{i}}, q^{m_{i}}\right)}\right)\left(x_{i}\right).
	\end{equation*}
	Hence
	\begin{equation*}
		\begin{split}
			&\underset{\substack{\widehat{\boldsymbol{x}}^{i} \in \prod_{1 \leq j \leq n, j \neq i}\left[N_{j}\right],\\ r \in[M]}}{\mathbf{E}} \frac{1}{N_{i}} \sum_{x_{i}}\\
			&\left|\mathbf{E}\left(\left.f_{\widehat{\boldsymbol{x}}^{i}} \right\rvert \mathcal{B}_{\left(q^{m_{i}} L^{m_{i}}, q^{m_{i}}\right)}\right)\left(x_{i}+q^{m_{i}} r^{m_{i}}\right)-\mathbf{E}\left(\left.f_{\widehat{\boldsymbol{x}}^{i}} \right\rvert \mathcal{B}_{\left(q^{m_{i}} L^{m_{i}}, q^{m_{i}}\right)}\right)\left(x_{i}\right)\right|
		\end{split}
	\end{equation*}
	has an upper bound $4 (M/L)^{m_i} \leq \delta / (2n)$. We can thus replace, for all $1\leq i\leq n$, the term
	\begin{equation*}
		\mathbf{E}\left(\left.f_{\widehat{\boldsymbol{x}}^{i}} \right\rvert \mathcal{B}_{\left(q^{m_{i}} L^{m_{i}}, q^{m_{i}}\right)}\right)\left(x_{i}+q^{m_{i}} r^{m_{i}}\right)
	\end{equation*}
by
	\begin{equation*}
	\mathbf{E}\left(\left.f_{\widehat{\boldsymbol{x}}^{i}} \right\rvert \mathcal{B}_{\left(q^{m_{i}} L^{m_{i}}, q^{m_{i}}\right)}\right)\left(x_{i}\right)
	\end{equation*}
	at a total cost not exceeding $\delta/2$ in the expression of $\Lambda_{q, M, \boldsymbol{N};\boldsymbol{m}}(f, F_{1}, \ldots, F_{n})$. The desired result then follows from Lemma \ref{roy's inequality,corollary}.
\end{proof}
	
We are now ready to prove Theorem \ref{popular difference,n-D}.

\begin{proof}[Proof of Theorem \ref{popular difference,n-D}]
	 If $N_{n}\geq \exp (\delta^{-C})$ for some constant $C=C\left(n,\boldsymbol{m}\right)$, by Propositions \ref{energy-increment} and \ref{linearization}, for some $q,L\geq 1$, we have $(\delta / 8n) L\geq 1$ and
	\begin{equation*}
		\Lambda_{q,\left\lfloor \frac{\delta}{8 n} L\right\rfloor,\boldsymbol{N};\boldsymbol{m} }\left(\chi_{A},  \ldots, \chi_{A}\right) \geq c_{n}\mu_{A}^{n+1}-\frac{3}{2}\delta.
	\end{equation*}
	The result then follows by replacing $\delta$ by $\frac{2}{3}c_{n} \delta$ and applying the pigeonhole principle.
\end{proof}

We conclude this section with the proof of Corollary \ref{popular difference,1-D}.
\begin{proof}[Proof of Corollary \ref{popular difference,1-D}]
	Without loss of generality, we assume $N^{1/m_{n}}$ is an integer. Associate $A$ with
	\begin{equation*}
		A^{\prime}:=\Bigg\{\boldsymbol{x} \in \prod_{1 \leq i \leq n}\left[N^{m_{i}/m_{n}}\right]: x_{1}+\cdots+x_{n} \in A\Bigg\}.
	\end{equation*}
	The relationship between the size of $A^{\prime}\subset \prod_{1 \leq i \leq n}[N^{m_{i}/m_{n}}]$ and the size of $A\subset [N]$ is as follows
	\begin{equation*}
		\begin{split}
			\left|A^{\prime}\right| & =\sum_{x_{i} \in\left[N^{m_{i}/m_{n}}\right],1\leq i\leq n-1}\left|\left(A-x_{1}-\cdots-x_{n-1}\right) \cap[N]\right| \\
			& \geq\left(|A|-(n-1) 	N^{\frac{m_{n-1}}{m_{n}}}\right) N^{\frac{m_{1}+\cdots+m_{n-1}}{m_{n}}}.
		\end{split}
	\end{equation*}	
	Hence, $\mu_{A^{\prime}} \geq \mu_{A}-\delta$ since $N$ is sufficiently large.
	
	By Theorem \ref{popular difference,n-D} and the definition of $A^{\prime}$, there exists an integer $r\geq 1$ such that
	\begin{equation*}
		\begin{split}
			&\Bigg|\Bigg\{\boldsymbol{x} \in \prod_{1 \leq i \leq n}\left[N^{\frac{m_{i}}{m_{n}}}\right]: \sum_{1 \leq j \leq n} x_{j}, r^{m_{1}}+\sum_{1 \leq j \leq n} x_{j}, \ldots, r^{m_{n}}+\sum_{1 \leq j \leq n} x_{j}\in A\Bigg\}\Bigg|\\
			&\gtrsim_{n} \left( \mu_{A^{\prime}}^{n+1}- \delta\right)N^{\frac{m_{1}+\cdots+ m_{n}}{m_{n}}}\\
			&\geq \left(\mu_{A}^{n+1}-2^{n+1} \delta\right) N^{\frac{m_{1}+\cdots+m_{n}}{m_{n}}}.
		\end{split}
	\end{equation*}
	The result then follows from the pigeonhole principle for  $x_{1},\ldots, x_{n-1}$.
\end{proof}

\appendix
\section{A modified degree-lowering argument} \label{appendiA}

In this section, in the course of proving Theorem \ref{Lambda_{N}-inverse}, we provide our modified degree‑lowering argument.  In fact, we will prove a stronger inverse theorem (Theorem \ref{Lambda_{N}-inverse with phase}) for the following operator with phase functions
\begin{align*}
	&\Lambda_{N;\boldsymbol{m}}^{\alpha_{1}, \ldots, \alpha_{k}}\left(f_{0}, \ldots, f_{n}\right):= \\
	&\underset{\substack{\boldsymbol{x} \in \prod_{1 \leq i \leq n}\left[N^{m_{i}}\right], \\
			r \in [N]}}{\mathbf{E}} f_{0}(\boldsymbol{x}) \Bigg(\prod_{1 \leq i \leq n} f_{i}\left(\boldsymbol{x}+r^{m_{i}} \boldsymbol{e}_{\boldsymbol{i}}\right)\Bigg)
	e\Bigg(\sum_{1\leq j\leq k}\alpha_{j}(\boldsymbol{x})r^{m_{n+j}}\Bigg),
\end{align*}
where $n\geq 1$, $k\geq 0$, $\boldsymbol{m}\in\mathbb{N}^{n+k}$, $\alpha_{1},\ldots,\alpha_{k}:\mathbb{Z}^{n}\rightarrow\mathbb{T}$ are phase functions, and  $f_{0},\ldots,f_{n}:\mathbb{Z}^{n}\rightarrow\mathbb{C}$ are 1-bounded functions.  When $k=0$, it coincides with the operator $\Lambda_{N;\boldsymbol{m}}$ defined at the beginning of Section \ref{section:strategy}.

\begin{theorem}\label{Lambda_{N}-inverse with phase}
	Given $n\geq 1$, $k\geq 0$, and a strictly increasing $(n+k)$-tuple $\boldsymbol{m}$ of positive integers, there exists a constant $C=C\left(n,k,\boldsymbol{m}\right)>0$ such that the following holds. Let $\delta\in(0,1/10)$, $\alpha_{1},\ldots,\alpha_{k}:\mathbb{Z}^{n}\rightarrow\mathbb{T}$ be phase functions, and   $f_{0},\ldots,f_{n}:\mathbb{Z}^{n}\rightarrow\mathbb{C}$ be 1-bounded functions. If $N \geq \delta^{-C}$ and
	\begin{equation*}
		\left|\Lambda_{N;\boldsymbol{m}}^{\alpha_{1},\ldots,\alpha_{k}}\left(f_{0},\ldots, f_{n}\right)\right| \geq \delta,
	\end{equation*}
	then there exists $1\leq q \leq \delta^{-C}$ such that for any $1 \leq i \leq n$ and any $1\leq L \leq \delta^{C} N$,
	\begin{equation*}
		\underset{\widehat{\boldsymbol{x}}^{i} \in \prod_{1 \leq j \leq n, j \neq i}\left[ N^{m_{j}}\right]}{\mathbf{E}}\left\|\mathbf{E}\left(\left.\left(f_{i}\right)_{\widehat{\boldsymbol{x}}^{i}}\chi_{\left[2N^{m_{i}}\right]} \right\rvert \mathcal{B}_{\left(q^{m_{i}}  L^{m_{i}},q^{m_{i}}\right)}\right)\right\|_{2}^{2} \geq \delta^{C} N^{m_{i}}.
	\end{equation*}
\end{theorem}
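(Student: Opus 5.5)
Theorem \ref{Lambda_{N}-inverse with phase} will be proved by induction on $n$, with the inner induction on $k$ handled by degree-lowering, in the spirit of Peluse's method. There are three stages. First, we use PET induction and concatenation to control $\Lambda_{N;\boldsymbol{m}}^{\alpha_1,\ldots,\alpha_k}$ by a Gowers-type box norm of a single function along the $i$-th coordinate. Second, we use a degree-lowering step to replace that control by control through conditional expectations on $\mathcal{B}_{(q^{m_i}L^{m_i},q^{m_i})}$. Third, the ``dual-difference interchange'' (Lemma \ref{dual-difference interchange}) reduces the multidimensional case to the case $n=1$. The phases $\alpha_j(\boldsymbol{x})r^{m_{n+j}}$ are included because they are exactly what appears after one stage of degree-lowering. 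Keeping them in the statement makes the induction self-improving: a structured dual function in the $i$-th variable produces a new phase of degree $m_i$ in $r$, and this yields an instance of the same theorem with one fewer function and one more phase.

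\textbf{Step 1 (PET and concatenation).} Fix $i$. Applying Cauchy--Schwarz $O_{n,k,\boldsymbol{m}}(1)$ times in the standard PET scheme (as in \cite{KKL24 b} and \cite[Section 4]{KMPWW24}) removes $f_0$, the other $f_j$, and all the phases, since these depend on $\boldsymbol{x}$ only through bounded or lower-degree terms. The outcome is a lower bound, polynomial in $\delta$, for an average of box norms of $(f_i)_{\widehat{\boldsymbol{x}}^i}$ with differences of the form $q^{m_i}$ times polynomial combinations of the $r$-variables. The concatenation results of \cite{KKL24 b} then upgrade this to a bound by a genuine Gowers norm $\|\cdot\|_{U^s}$ along progressions of step $q^{m_i}$, with $q\le\delta^{-C}$ and $s=O(1)$. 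This step is routine bookkeeping and is quoted from the literature.

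\textbf{Step 2 (degree-lowering, Proposition \ref{degree-lowering}).} Replace $f_i$ by its dual function $F_i$, so that $|\Lambda(f_0,\ldots,F_i,\ldots)|\gtrsim\delta^{C}$ and $F_i$ has large $U^s$ norm. If $s\ge 3$, apply the $U^s$ inverse theorem in the form ``$\Delta_{h_1,\ldots,h_{s-2}}F_i$ correlates with a linear phase $e(\beta_{\boldsymbol h}(\widehat{\boldsymbol{x}}^i)x_i)$''. Substituting this into the counting operator, the shift $x_i\mapsto x_i+r^{m_i}$ turns $e(\beta x_i)$ into a factor $e(\beta r^{m_i})$ times a function of $\boldsymbol{x}$. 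This is precisely an operator $\Lambda^{\alpha_1,\ldots,\alpha_{k'}}$ with one fewer $f$ but one additional phase of degree $m_i$, and we reorder so that $\boldsymbol{m}$ stays strictly increasing. Induction on $n$ (the case with $f_i$ absorbed) then forces $\beta$ to be major arc: $\|q\beta\|\ll\delta^{-C}N^{-m_i}$. Consequently $\beta$ is essentially constant along progressions of step $q^{m_i}$ and length $L^{m_i}$ for $L\le\delta^CN$. This lowers $s$ by one. After $O(1)$ iterations we reach $U^2$ control with major-arc frequencies, and a routine argument converts this into the conditional expectation bound of the statement.

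\textbf{Step 3 (base case $n=1$ and the main obstacle).} The base case $n=1$ has a single $f_1$ and arbitrary phases $\alpha_j(x)r^{m_{1+j}}$. Here the operator can be written as $\mathbf{E}_x f_0(x)\,\mathbf{E}_r f_1(x+r^{m_1})e(\sum_j\alpha_j(x)r^{m_{1+j}})$. After Cauchy--Schwarz in $x$ and a van der Corput step, we must show that the frequencies $\beta_{\boldsymbol h}$ arising in Step 2 have ``low rank'', i.e.\ that they vary in $\boldsymbol h$ in a controlled, essentially linear way. This is what allows them to be pigeonholed into a single major arc with a common $q$. I expect this to be the main obstacle. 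I would attack it with Weyl-sum estimates for $\sum_r e(\beta r^{m_1}+\sum_j\alpha_j r^{m_{1+j}})$, which are uniform in the $\alpha_j$ because the degrees are distinct, combined with a Vinogradov-type argument showing $\|q\beta\|$ is small. For $n\ge2$, the dual-difference interchange lemma lets the differencing in the variables $\widehat{\boldsymbol{x}}^i$ commute with taking duals, so the low-rank analysis is needed only in this one-dimensional case. Finally, a single $q$ that works for all $i$ is obtained by taking the product of the $n$ moduli and using Lemma \ref{property of conditional expectation}\eqref{property of conditional expectation,(ii)} to pass to the common refinement.
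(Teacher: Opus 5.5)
\textbf{There is a genuine gap in Step 2.} It occurs at the sentence ``Induction on $n$ (the case with $f_i$ absorbed) then forces $\beta$ to be major arc.'' Your inductive hypothesis is the theorem in dimension $n-1$, and its conclusion concerns only the surviving functions, namely large conditional expectations of their slices. It says nothing about the coefficient $\beta=\beta_{\boldsymbol h}(\widehat{\boldsymbol x}^{i})$ of the new phase. That coefficient is an arbitrary function of the remaining coordinates, and with the $f_j$ arbitrary you cannot read off $\|q\beta\|\ll\delta^{-C}N^{-m_i}$. Getting that information would need a multidimensional analogue of Proposition~\ref{remove dependencies}. The paper proves statements of this kind only for $n=1$, where the operator left after removing $f_1$ is a pure exponential sum in $r$ and Lemma~\ref{weyl} applies.

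There is also a structural obstruction. The theorem requires $m_1<\dots<m_n<m_{n+1}<\dots<m_{n+k}$, so every phase degree exceeds every function degree. Eliminating $f_i$ with $i<n$ creates a phase of degree $m_i<m_n$, and the new operator is not an instance of the theorem. Reordering does not help, because the first $n$ entries of $\boldsymbol m$ are tied to the functions.

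\textbf{How the paper avoids this.} For $n\ge 2$ the paper never needs information on $\beta$. It runs the cycle (a)$\Rightarrow$(b)$\Rightarrow$(c)$\Rightarrow$(a) of Proposition~\ref{degree-lowering}, always differencing in the $\boldsymbol e_n$ direction.
\begin{itemize}
\item In (a)$\Rightarrow$(b), only $f_n$ is eliminated, via stashing, Lemma~\ref{dual-difference interchange}(i) and the $U^2$ inverse theorem. The new phase $\beta r^{m_n}$ therefore has admissible degree. The $(n-1)$-dimensional theorem, with $x_n$ fixed, is then used for what it actually gives: structure of $\Delta_{\boldsymbol h|\boldsymbol e_n}f_i$ for $i<n$.
\item In (b)$\Rightarrow$(c), $f_1$ is stashed and Lemma~\ref{dual-difference interchange}(ii) removes it \emph{without} creating any phase. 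The $(n-1)$-dimensional theorem in coordinates $2,\dots,n$ then gives structure of $\Delta_{\boldsymbol h|\boldsymbol e_n}f_n$.
\item In (c)$\Rightarrow$(a), Lemma~\ref{Claim 3} turns that structure into $U^{s-1}$ control of $f_n$, lowering $s$ by one.
\end{itemize}

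\textbf{Base case.} Your base case is also only a gesture. The paper's argument has three ingredients:
\begin{itemize}
\item It uses Lemma~\ref{technical lemma 1}, Weyl's inequality, and a discretisation of $\phi$ to show that $\tilde\phi(\cdot,\boldsymbol h^{1})$ takes few values. Hence $\phi$ has low rank on a dense set, and this feeds Lemma~\ref{technical lemma 2} to lower the Gowers degree of the dual. Low rank is not what produces a common modulus.
\item Once $U^2$ is reached, a second Weyl argument makes the $\alpha_j(x)$ constant on a dense set of $x$ (Proposition~\ref{remove dependencies}).
\item The Fourier argument of Proposition~\ref{standard Fourier method} then yields the conditional expectation bound.
\end{itemize}
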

\begin{remark}
	For the case $n=1$, the conclusion above should be understood as
	\begin{equation*}
		\left\|\mathbf{E}\left(\left.f_{1}\chi_{\left[2N^{m_{1}}\right]} \right\rvert \mathcal{B}_{\left(q^{m_{1}}  L^{m_{1}},q^{m_{1}}\right)}\right)\right\|_{2}^{2} \geq \delta^{C} N^{m_{1}}.
	\end{equation*}
	
\end{remark}

The main ingredients of this section lie in two aspects. We first show that the simplest case $n=1$ of Theorem \ref{Lambda_{N}-inverse with phase} implies the general $n$ case; we then prove the case $n=1$ itself.

Let us first review some basics of difference operators and Gowers norms.
\begin{defnprop}\label{Gowers norm}
	\leavevmode
	\begin{enumerate}[(i)]
		\item\label{Gowers norm,(i)}
		Let $s\geq 1$. For a function $f:\mathbb{Z}\rightarrow\mathbb{C}$, define its multiplicative difference and additive difference with respect to $h\in\mathbb{Z}$ and $\boldsymbol{h}=(h_1,\ldots, h_s)\in\mathbb{Z}^{s}$ by
		\begin{equation*}
			\Delta_{h} f:=f(\cdot) \overline{f(\cdot+h)},\quad \Delta_{\boldsymbol{h}} f:=\Delta_{h_{1}} \cdots \Delta_{h_{s}} f,
		\end{equation*}
		and
		\begin{equation*}
			\partial_{h} f:=f(\cdot)-f(\cdot+h) ,\quad \partial_{\boldsymbol{h}} f:=\partial_{h_{1}} \cdots \partial_{h_{s}} f.
		\end{equation*}
		
		Note that $\Delta_{\boldsymbol{h}}$ is multiplicative ($\Delta_{\boldsymbol{h}}(fg)=\Delta_{\boldsymbol{h}}f \Delta_{\boldsymbol{h}}g$) and commutative (its value is independent of the order of $h_1, \ldots, h_s$). Moreover, for any $\alpha:\mathbb{Z}\rightarrow\mathbb{T}$, we have the  identity $\Delta_{\boldsymbol{h}} e(\alpha)=e\left(\partial_{\boldsymbol{h}}\alpha\right)$.
		
		\item\label{Gowers norm,(ii)} Let $s\geq 1$. The Gowers norm of order $s$ for a finitely supported function $f: \mathbb{Z} \rightarrow \mathbb{C}$ is defined by
		\begin{equation*}
			\|f\|_{U^{s}}:=\Bigg(\sum_{x\in\mathbb{Z},  \boldsymbol{h}\in\mathbb{Z}^{s}} \Delta_{\boldsymbol{h}} f(x)\Bigg)^{2^{-s}}.
		\end{equation*}
			
		\item\label{Gowers norm,(iii)} Let $s,n\geq 1$, $\boldsymbol{h}\in\mathbb{Z}^{s}$, $\boldsymbol{x}\in\mathbb{Z}^{n}$, and  $f:\mathbb{Z}^{n}\rightarrow\mathbb{C}$. To simplify expressions involving differences along the $\boldsymbol{e_{n}}$ direction, we introduce the notation
		\begin{equation*}
			\Delta_{\boldsymbol{h} | \boldsymbol{e_{n}}} f(\boldsymbol{x}):=\left(\Delta_{\boldsymbol{h}}(f)_{\widehat{\boldsymbol{x}}^{n}}\right)\left(x_{n}\right),\quad
			\partial_{\boldsymbol{h} |\boldsymbol{e_{n}}} f(\boldsymbol{x}):=\left(\partial_{\boldsymbol{h}}(f)_{\widehat{\boldsymbol{x}}^{n}}\right)\left(x_{n}\right).
		\end{equation*}
		Observe that this notation satisfies the identity
		\begin{equation*}
			\left(\Delta_{\boldsymbol{h}|\boldsymbol{e_{n}}}f\right)_{\widehat{\boldsymbol{x}}^{n}}=\Delta_{\boldsymbol{h}}\left(f\right)_{\widehat{\boldsymbol{x}}^{n}}.
		\end{equation*}
	\end{enumerate}
\end{defnprop}

We design the following self-improving process to prove Theorem \ref{Lambda_{N}-inverse with phase} for the case $n\geq 2$.
\begin{proposition}\label{degree-lowering}
	Let $n\geq 2$, $k\geq 0$, and $\boldsymbol{m}$ be a strictly increasing $(n+k)$-tuple of positive integers. Consider the following three statements:
	\begin{enumerate}[(a)]
		\item\label{degree-lowering,(a)} There exist $C>0$ and $s\geq 2$, both depending only on $n$, $k$, and $\boldsymbol{m}$, such that the following holds. Let $\delta\in(0,1/10)$, $N\geq 1$,   $\alpha_{1},\ldots,\alpha_{k}:\mathbb{Z}^{n}\rightarrow\mathbb{T}$ be phase functions, and $f_{0},\ldots, f_{n}:\mathbb{Z}^{n}\rightarrow\mathbb{C}$ be 1-bounded functions. If $N \geq \delta^{-C}$ and
		\begin{equation*}
			\left|\Lambda_{N;\boldsymbol{m}}^{\alpha_{1},\ldots,\alpha_{k}}\left(f_{0}, \ldots, f_{n}\right)\right| \geq \delta,
		\end{equation*}
		then 	
		\begin{equation*}
			\underset{\widehat{\boldsymbol{x}}^{n} \in \prod_{1 \leq i \leq n-1}\left[N^{m_{i}}\right]}{\mathbf{E}}\left\|\left(f_{n}\right)_{\widehat{\boldsymbol{x}}^{n}}\chi_{\left[2N^{m_{n}}\right]}\right\|_{U^{s}}^{2^{s}} \geq \delta^{C} N^{m_{n}(s+1)}.
		\end{equation*}
		
		\item\label{degree-lowering,(b)}
		There exist $C>0$ and $s'\geq 2$, both depending only on $n$, $k$, and $\boldsymbol{m}$, such that the following holds. Let $\delta\in(0,1/10)$, $N\geq 1$,   $\alpha_{1},\ldots,\alpha_{k}:\mathbb{Z}^{n}\rightarrow\mathbb{T}$ be phase functions, and $f_{0},\ldots, f_{n}:\mathbb{Z}^{n}\rightarrow\mathbb{C}$ be 1-bounded functions. If $N \geq \delta^{-C}$ and
		\begin{equation*}
			\left|\Lambda_{N;\boldsymbol{m}}^{\alpha_{1},\ldots,\alpha_{k}}\left(f_{0}, \ldots, f_{n}\right)\right| \geq \delta,
		\end{equation*}
		then there exists $1\leq q\leq \delta^{-C}$ such that for any $1\leq i\leq n-1$ and any $1\leq L\leq \delta^{C}N$, 	
		\begin{equation*}
			\underset{\substack{\widehat{\boldsymbol{x}}^{i} \in 	\prod_{1 \leq j \leq n, j \neq i}\left[ N^{m_{j}}\right],\\ \boldsymbol{h}\in\left[\pm 2N^{m_{n}}\right]^{s^{\prime}-2}}}{\mathbf{E}}\left\|\mathbf{E}\left(\left.\left(\Delta_{\boldsymbol{h}|\boldsymbol{e_{n}}}f_{i}\right)_{\widehat{\boldsymbol{x}}^{i}}\chi_{\left[2N^{m_{i}}\right]} \right\rvert \mathcal{B}_{\left(q^{m_{i}}  L^{m_{i}},q^{m_{i}}\right)}\right)\right\|_{2}^{2} \geq \delta^{C} N^{m_{i}}.
		\end{equation*}
		(The difference operator above disappears when $s^{\prime}=2$.)
		
		\item\label{degree-lowering,(c)} 		
		There exist $C>0$ and $s''\geq 2$, both depending only on $n$, $k$, and $\boldsymbol{m}$, such that the following holds. Let $\delta\in(0,1/10)$, $N\geq 1$,   $\alpha_{1},\ldots,\alpha_{k}:\mathbb{Z}^{n}\rightarrow\mathbb{T}$ be phase functions, and $f_{0},\ldots, f_{n}:\mathbb{Z}^{n}\rightarrow\mathbb{C}$ be 1-bounded functions.  If $N \geq \delta^{-C}$ and
		\begin{equation*}
			\left|\Lambda_{N;\boldsymbol{m}}^{\alpha_{1},\ldots,\alpha_{k}}\left(f_{0}, \ldots, f_{n}\right)\right| \geq \delta,
		\end{equation*}
		then there exists $1\leq q\leq \delta^{-C}$ such that for any $1\leq L\leq \delta^{C}N$, 	
		\begin{equation*}
			\underset{\substack{\widehat{\boldsymbol{x}}^{n} \in 	\prod_{1 \leq j \leq n-1}\left[ N^{m_{j}}\right],\\ \boldsymbol{h}\in\left[\pm 2N^{m_{n}}\right]^{s^{\prime\prime}-2} }}{\mathbf{E}}\left\|\mathbf{E}\left(\left.\left(\Delta_{\boldsymbol{h}|\boldsymbol{e_{n}}}f_{n}\right)_{\widehat{\boldsymbol{x}}^{n}}\chi_{\left[2N^{m_{n}}\right]} \right\rvert \mathcal{B}_{\left(q^{m_{n}}  L^{m_{n}},q^{m_{n}}\right)}\right)\right\|_{2}^{2} \geq \delta^{C} N^{m_{n}}.
		\end{equation*}
		(The difference operator above disappears when $s^{\prime\prime}=2$.)
	\end{enumerate}
	Then the following implications hold:
	\begin{enumerate}[(i)]
		\item If \eqref{degree-lowering,(a)} holds for some $s$, then \eqref{degree-lowering,(b)} holds for $s^{\prime}=s$.
		
		\item  If \eqref{degree-lowering,(b)} holds for some $s^{\prime}$, then \eqref{degree-lowering,(c)} holds for $s^{\prime\prime}=s^{\prime}$.
		
		\item If \eqref{degree-lowering,(c)} holds for some $s^{\prime\prime}$, then  \eqref{degree-lowering,(a)} holds for $s=s^{\prime\prime}-1$.
	\end{enumerate}
\end{proposition}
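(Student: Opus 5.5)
The plan is an induction on $n$. The case $n=1$ of Theorem \ref{Lambda_{N}-inverse with phase} is proved separately, for every $k$, and we take as inductive hypothesis that Theorem \ref{Lambda_{N}-inverse with phase} holds in dimension $n-1$ for every number of phases. Each implication is then a passage between an operator and its dual, and the three steps are deliberately asymmetric. Only (i) uses the inductive hypothesis. Only (ii) uses the one-dimensional case. Only (iii) genuinely lowers the degree. Throughout, whenever we need to move a difference operator $\Delta_{\boldsymbol{h}|\boldsymbol{e_n}}$ past a dual function, we invoke the dual-difference interchange results (Lemma \ref{dual-difference interchange}).

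\emph{Step (i), (a)$\Rightarrow$(b).} Starting from $|\Lambda^{\alpha_1,\ldots,\alpha_k}_{N;\boldsymbol{m}}|\ge\delta$, we apply Cauchy--Schwarz $s-2$ times in the variable $x_n$ with shifts $\boldsymbol{h}\in[\pm2N^{m_n}]^{s-2}$. This controls an average over $\boldsymbol{h}$ of operators whose inputs are the differenced functions $\Delta_{\boldsymbol{h}|\boldsymbol{e_n}}f_j$. The phase terms change only by $\partial_{\boldsymbol{h}}$, which we absorb by redefining the $\alpha_j$. Hypothesis (a) says that the slices $\Delta_{\boldsymbol{h}}(f_n)_{\widehat{\boldsymbol{x}}^n}$ have large $U^2$ norm on average. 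By the $U^2$ inverse theorem, each such slice correlates with a linear phase $e(\beta(\widehat{\boldsymbol{x}}^n,\boldsymbol{h})x_n)$, and we replace the $n$-th slot of the operator by this phase. Evaluated at the $n$-th shift $\boldsymbol{x}+r^{m_n}\boldsymbol{e_n}$, the phase becomes $e(\beta x_n)\,e(\beta r^{m_n})$. Freezing $x_n$ as a parameter, what remains is an $(n-1)$-dimensional operator with $k+1$ phase functions and exponent tuple $(m_1,\ldots,m_{n-1},m_n,m_{n+1},\ldots)$, which is still strictly increasing. Applying the inductive hypothesis, together with a pigeonhole in $q$ over $(x_n,\boldsymbol{h})$ exactly as in the proof of Theorem \ref{Lambda_{q,M}-inverse}, yields (b) with $s'=s$.

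\emph{Step (ii), (b)$\Rightarrow$(c).} By (b), each $\Delta_{\boldsymbol{h}|\boldsymbol{e_n}}f_i$ with $i\le n-1$ correlates with its conditional expectation on $\mathcal{B}_{(q^{m_i}L^{m_i},q^{m_i})}$. We pass to the dual function $F_n$ of the differenced operator and use $\Lambda(\ldots,F_n)=\|F_n\|_2^2$. We then replace each $f_i$, $i\le n-1$, by its structured part and restrict $r$ to the subprogression $q\cdot[\delta^{C}N]$. For such $r$ the structured parts satisfy $f_i(\boldsymbol{x}+r^{m_i}\boldsymbol{e_i})\approx f_i(\boldsymbol{x})$ up to the $O((M/L)^{m_i})$ boundary error from the proof of Proposition \ref{linearization}. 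The operator then collapses to a one-dimensional operator in $x_n$ that retains the phases, with all other coordinates frozen. Applying the case $n=1$ to it gives (c) with $s''=s'$.

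\emph{Step (iii), (c)$\Rightarrow$(a) with $s=s''-1$.} This is the genuine degree-lowering step and the one I expect to be the main obstacle. Expanding $\|\mathbf{E}(g\,|\,\mathcal{B})\|_2^2$ via Lemma \ref{property of simple partition}\eqref{property of simple partition,(i)} for $g=\Delta_{\boldsymbol{h}}(f_n)_{\widehat{\boldsymbol{x}}^n}$ produces one additional difference, but restricted to the short progression $\{q^{m_n}l:|l|<L^{m_n}\}$. A crude Fourier estimate converts this to $U^{s''}$ control, not $U^{s''-1}$, so it loses a degree. Instead we apply (c) to the \emph{dual} function of $f_n$ rather than to $f_n$ itself. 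The dual-difference interchange then says that the locally constant structure carried by $\Delta_{\boldsymbol{h}}$ of the dual is inherited with one fewer difference. This lets us substitute a function constant on $\mathcal{B}_{(q^{m_n}L^{m_n},q^{m_n})}$ into the $n$-th slot, at the level of $s''-3$ differences. Arguing as in step (ii), that substitution makes the $n$-th slot essentially invariant under the shift $r^{m_n}$ for $r\in q[\delta^{C}N]$. PET induction and concatenation then give control of the operator by the average over slices of $\|(f_n)_{\widehat{\boldsymbol{x}}^n}\chi_{[2N^{m_n}]}\|_{U^{s''-1}}^{2^{s''-1}}$, which is (a). The delicate point is making the interchange quantitative with only polynomial losses in $\delta$, and I would first try to reproduce the one-dimensional argument of \cite[Lemma 5.5]{PPS24} slice by slice in $\widehat{\boldsymbol{x}}^n$.

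Iterating (i)--(iii) from the initial $s$ supplied by PET and concatenation lowers $s$ to $2$. At $s''=2$, statements (b) and (c) together give Theorem \ref{Lambda_{N}-inverse with phase}, after taking the product of the two moduli $q$ obtained and refining partitions as in the proof of Proposition \ref{energy-increment}.
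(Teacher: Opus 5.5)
Your outer structure (induction on $n$ and the cycle (a)$\Rightarrow$(b)$\Rightarrow$(c)$\Rightarrow$(a)) matches the paper. The second half of your (i) also matches: freezing $x_n$, applying the $(n-1)$-dimensional theorem with the extra phase $\beta r^{m_n}$, and pigeonholing $q$. However, the mechanisms you give for the substance of (i), (ii) and (iii) do not work.

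\emph{Step (i).} Knowing that $\Delta_{\boldsymbol{h}}(f_n)_{\widehat{\boldsymbol{x}}^{n}}$ correlates with $e(\beta x_n)$ does not let you put $e(\beta\,\cdot)$ into the $n$-th slot. The operator is $f_n$ paired with its dual $F^{(n)}$, so it is the dual that must correlate with the phase. The function $f_n$ may carry a large linear-phase component that contributes nothing to the count. The missing step is the stashing trick:
\begin{enumerate}[(1)]
\item apply (a) to $\Lambda(f_0,\ldots,f_{n-1},\overline{F^{(n)}})$ to get $U^s$ control of the slices of $F^{(n)}=\mathbf{E}_{r}(\cdots)$;
\item apply Lemma \ref{dual-difference interchange}(i) to get $U^2$ control of $\mathbf{E}_{r}\Delta_{\boldsymbol{h}}(\cdots)$, which is not $\Delta_{\boldsymbol{h}}F^{(n)}$, since the latter involves two independent values of $r$;
\item apply Lemma \ref{U^2 inverse}.
\end{enumerate}
After $x_n\mapsto x_n+r^{m_n}$, that correlation is exactly the differenced operator with $f_n$ replaced by $e(\beta x_n+\beta r^{m_n})$. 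Your preliminary Cauchy--Schwarz steps in $x_n$ do not produce this.

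\emph{Step (ii).} Statement (b) is an inverse theorem, not a decomposition. It says $\mathbf{E}(\Delta_{\boldsymbol{h}|\boldsymbol{e_n}}f_i\,|\,\mathcal{B})$ has large energy, not that the remainder contributes negligibly to the operator. Justifying your substitution would need an energy increment as in Proposition \ref{energy-increment}, which costs $q\le\exp(\delta^{-C})$ and is incompatible with $q\le\delta^{-C}$ in (c). Restricting $r\in[N]$ to $q\cdot[\delta^{C}N]$ is also not free for $\Lambda_{N;\boldsymbol{m}}$; this is why Section \ref{section:strategy} works with $\Lambda_{q,M,\boldsymbol{N};\boldsymbol{m}}$.

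The paper instead stashes only $f_1$ and applies (b) with $i=1$ to the dual $F^{(1)}$. It then uses Lemma \ref{dual-difference interchange}(ii), where the difference (along $\boldsymbol{e_n}$) and the conditional expectation (along $\boldsymbol{e_1}$) act on different coordinates. The output is $\mathbf{E}_{\widehat{\boldsymbol{x}}^{1},\boldsymbol{h}}|\mathbf{E}_{x_1,r}\Delta_{\boldsymbol{h}|\boldsymbol{e_n}}f_0(\boldsymbol{x})\prod_{2\le j\le n}\Delta_{\boldsymbol{h}|\boldsymbol{e_n}}f_j(\boldsymbol{x}+r^{m_j}\boldsymbol{e_j})\,e(\cdots)|\ge\delta^{C}$, in which $f_1$ has simply disappeared. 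The inverse theorem in dimension $n-1$ (coordinates $2,\ldots,n$, exponents $m_2<\cdots<m_{n+k}$) then gives (c) for $f_n$. So both (i) and (ii) use the inductive hypothesis in dimension $n-1$, and nothing collapses to $n=1$.

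\emph{Step (iii).} You have also misplaced the difficulty: (iii) is the easy step, and it is Lemma \ref{Claim 3}. The atoms of $\mathcal{B}_{(q^{m_n}L^{m_n},q^{m_n})}$ have length $L^{m_n}$, which for $L\asymp\delta^{C}N$ is $\asymp\delta^{C}N^{m_n}$, not short. Expanding the squared norm by van der Corput gives $\mathbf{E}_{\widehat{\boldsymbol{x}}^{n}}\sum_k\mu_K(k)\|\Delta_{qk}(f_n)_{\widehat{\boldsymbol{x}}^{n}}\|_{U^{s''-2}}^{2^{s''-2}}\ge\delta^{C}N^{m_n(s''-1)}$, with $K\asymp\delta^{C}N^{m_n}$ and $q$ standing for $q^{m_n}$. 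Every term is nonnegative and $\mu_K\le 1/K$, so adding all the other shifts gives $\mathbf{E}_{\widehat{\boldsymbol{x}}^{n}}\|(f_n)_{\widehat{\boldsymbol{x}}^{n}}\|_{U^{s''-1}}^{2^{s''-1}}\ge\delta^{C}N^{m_n s''}$. This is (a) with $s=s''-1$.

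No Fourier argument, dual function or PET is needed. The route you sketch would not work anyway. There is no interchange of a difference with a conditional expectation acting on the same coordinate; this is exactly the contrast the paper draws between Lemma \ref{dual-difference interchange}(ii) and Lemma \ref{Claim 3}. Also, PET with concatenation gives control by some high-degree norm, not by the prescribed $U^{s''-1}$.

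The real gain happens in (i)--(ii). The $U^2$ layer of the $U^s$ norm of the dual becomes the phase $e(\beta r^{m_n})$ and the dimension drops. The $(n-1)$-dimensional theorem then returns only a conditional-expectation (``$U^1$-type'') structure on top of $s-2$ differences, and (ii) transfers this structure to $f_n$.
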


\begin{remark}
	It is well known that standard PET induction, together with concatenation results, yields global Gowers norm control for   $\Lambda_{N;\boldsymbol{m}}^{\alpha_{1},\ldots,\alpha_{k}}$; that is, \eqref{degree-lowering,(a)} holds for some $s \ge 2$. Readers interested in the details may consult \cite{KKL24 b, Pel19} for a comprehensive  presentation. Moreover, Proposition \ref{degree-lowering} is self-improving: once \eqref{degree-lowering,(a)} holds for some $s\geq2$, then it holds with $s$ replaced by $s-1$. By iterating this reduction, we can lower the order of the Gowers norm in \eqref{degree-lowering,(a)} until we reach $s=2$. At that point, both \eqref{degree-lowering,(b)} and \eqref{degree-lowering,(c)} are satisfied with the parameters $s'=s''=2$. We therefore obtain Theorem \ref{Lambda_{N}-inverse with phase} for the case $n\geq 2$.
\end{remark}

Next, we show that Proposition \ref{degree-lowering} follows from the case $n=1$ of Theorem \ref{Lambda_{N}-inverse with phase}; the proof of this case will be provided later.

Two techniques will be frequently used. The first is the so-called ``stashing'' trick, which, roughly speaking, allows one to replace the function $f_{i}$ (for $1\leq i\leq n$) appearing in conclusions drawn from the assumption
 \begin{equation*}
	\left|\Lambda_{N;\boldsymbol{m}}^{\alpha_{1},\ldots,\alpha_{k}}\left(f_{0},\ldots,f_{n}\right)\right|\geq \delta,
\end{equation*}
by its dual function $F^{(i)}$, defined as
\begin{equation}\label{dual function}
	\begin{split}
		F^{(i)}(\boldsymbol{x})=\underset{r \in[N]}{\mathbf{E}} &f_{0}\left(\boldsymbol{x}-r^{m_{i}} \boldsymbol{e_{i}}\right) \prod_{1 \leq j \leq n, j \neq i} f_{j}\left(\boldsymbol{x}+r^{m_{j}} \boldsymbol{e}_{\boldsymbol{j}}-r^{m_{i}} \boldsymbol{e}_{\boldsymbol{i}}\right)\\
		& \qquad \times e\Bigg(\sum_{1\leq j\leq k}\alpha_{j}\left(\boldsymbol{x}-r^{m_{i}}\boldsymbol{e_{i}}\right)r^{m_{n+j}}\Bigg).
	\end{split}
\end{equation}
This kind of replacement is achieved by using the Cauchy--Schwarz inequality. Here is an example.

\begin{example}\label{stashing trick example}
	Under the same conditions as in Proposition \ref{degree-lowering}\eqref{degree-lowering,(a)}, and without loss of generality, we may further assume that each
	$f_{i}$ is supported in $\prod_{1\leq j\leq n}[2^{\mathbf{I}(j=i)}N^{m_{j}}]$ for $0\leq i\leq n$. We can then rewrite $\Lambda_{N;\boldsymbol{m}}\left(f_{0},\ldots,f_{n}\right)$ as
	\begin{equation*}
		\Lambda_{N;\boldsymbol{m}}^{\alpha_{1},\ldots,\alpha_{k}}\left(f_{0},\ldots,f_{n}\right)=\frac{1}{N^{m_{1}+\cdots+m_{n}}}\sum_{\boldsymbol{x}}f_{n}(\boldsymbol{x})F^{(n)}(\boldsymbol{x}),
	\end{equation*}
where $F^{(n)}$ is given by \eqref{dual function}. Since $F^{(n)}$ is 1-bounded and supported in $\prod_{1\leq j\leq n}[2^{\mathbf{I}(j=n)}N^{m_{j}}]$, applying the Cauchy--Schwarz inequality yields
	\begin{equation*}
		\begin{split}
			\left|\Lambda_{N;\boldsymbol{m}}^{\alpha_{1},\ldots,\alpha_{k}}\left(f_{0},\ldots,f_{n}\right)\right|^{2}&\leq\frac{2}{N^{m_{1}+\cdots+m_{n}}}\sum_{\boldsymbol{x}}\overline{F^{(n)}(\boldsymbol{x})}F^{(n)}(\boldsymbol{x})\\
			&=2\Lambda_{N;\boldsymbol{m}}^{\alpha_{1},\ldots,\alpha_{k}}\left(f_{0},\ldots,f_{n-1},\overline{F^{(n)}}\right).
		\end{split}
	\end{equation*}
	Therefore,
	\begin{equation*}
		\left|\Lambda_{N;\boldsymbol{m}}^{\alpha_{1},\ldots,\alpha_{k}}\left(f_{0},\ldots,f_{n}\right)\right|\geq \delta \Rightarrow\left|\Lambda_{N;\boldsymbol{m}}^{\alpha_{1},\ldots,\alpha_{k}}\left(f_{0},\ldots,f_{n-1},\overline{F^{(n)}}\right)\right|\geq \frac{\delta^{2}}{2}.
	\end{equation*}
	Hence, by Proposition \ref{degree-lowering}\eqref{degree-lowering,(a)}, there exist constants $C>0$ and $s\geq 2$ (the dependencies are omitted here for simplicity) such that
	\begin{equation*}
		\underset{\widehat{\boldsymbol{x}}^{n} \in \prod_{1 \leq i \leq n-1}\left[N^{m_{i}}\right]}{\mathbf{E}}\left\|\left(F^{(n)}\right)_{\widehat{\boldsymbol{x}}^{n}}\right\|_{U^{s}}^{2^{s}} \geq \delta^{C} N^{m_{n}(s+1)}.
	\end{equation*}
In later proofs, we will simply refer to this kind of argument as the stashing trick for $f_{i}$.
\end{example}

The second is the so-called ``dual-difference interchange'', which, roughly speaking, aims to exchange the order of the expectation operator (which usually appears after applying the stashing trick) and the difference operator (which usually arises in the expansion of Gowers norms). This kind of exchange is achieved by using the Cauchy--Schwarz inequality as well. Following the remark below, we present several results.

\begin{remark}
Van der Corput’s inequality is sometimes  more flexible to use than a direct application of the Cauchy--Schwarz inequality. Below we present a useful variant that follows directly from \cite[Lemma 3.1]{Pre17}.

	Let $\delta\in(0,1)$, $M\geq 1$, $I\subset\mathbb{Z}$ be an interval of length $M$, and $(\mathcal{A},\sigma)$ be a discrete probability space. Suppose that for every $\alpha\in\mathcal{A}$, $f_{\alpha }:\mathbb{Z}\rightarrow \mathbb{C}$ is 1-bounded. If $M\geq 10\delta^{-2}$ and
	\begin{equation*}
		\sum_{\alpha\in\mathcal{A}} \sigma(\alpha )\left|\underset{y\in I}{\mathbf{E}} f_{\alpha }(y)\right| \geq \delta,
	\end{equation*}
	then for any $1\leq H\leq (\delta^{2} / 4)  M$,
	\begin{equation*}
		\operatorname{Re}\Bigg(\sum_{\alpha\in \mathcal{A},h\in \mathbb{Z}} \sigma(\alpha)\mu_{H}(h) \underset{y \in I }{\mathbf{E}} \Delta_{h} f_\alpha (y)\Bigg) \geq \frac{\delta^{2}}{4},
	\end{equation*}
	where $\mu_{H}$ is a probability measure on $\mathbb{Z}$ (the Fej\'{e}r kernel) defined by
	\begin{equation*}
		\mu_{H}(x):=\frac{H-\min (H,|x|)}{H^{2}}, \quad x \in \mathbb{Z}.
	\end{equation*}
\end{remark}

\begin{lemma}\label{dual-difference interchange}
	\leavevmode

	\begin{enumerate}[(i)]
		\item\label{dual-difference interchange,(i)} \textup{(\cite[Lemma 3.6]{KKL24 a})} Given $s\geq 2$, there exists a constant  $C=C(s)>0$ such that the following holds. Let \(\delta\in(0,1/10)\), \(N\ge 1\), and \(\mathcal{A}\) a nonempty finite set. Suppose that for every \(\alpha\in\mathcal{A}\), \(f_\alpha:\mathbb{Z}\to\mathbb{C}\) is \(1\)-bounded and supported in \([N]\). Set \(F := \mathbf{E}_{\alpha \in \mathcal{A}} f_\alpha\).
		If $\|F\|_{U^{s}}^{2^{s}} \geq \delta N^{s+1}$ and $N\geq \delta^{-C}$, then
		\begin{equation*}
			\underset{\boldsymbol{h}\in[\pm N]^{s-2}}{\mathbf{E}}\left\|\underset{\alpha \in \mathcal{A}}{\mathbf{E}} \Delta_{\boldsymbol{h}} f_{\alpha}\right\|_{U^{2}}^{4} \geq  \delta^{C} N^{3}.
		\end{equation*}
		
		\item \label{dual-difference interchange,(ii)}
		Given $s\geq 1$, there exists a constant  $C=C(s)>0$ such that the following holds. Let \(\delta\in(0,1/10)\) and \(q,L,N_{1},N_{2}\ge 1\) satisfy
		\[
		N_{1},N_{2}\ge \delta^{-C},\qquad L\ge \delta N_{1}.
		\]
		Let \(\mathcal{A}\) be a nonempty finite set. Suppose that for every \(\alpha\in\mathcal{A}\), \(f_{\alpha}:\mathbb{Z}^{2}\to\mathbb{C}\) is \(1\)-bounded and supported in \([N_{1}]\times[N_{2}]\). Set \(F:=\mathbf{E}_{\alpha \in \mathcal{A}} f_{\alpha}\). If
		\begin{equation*}
			\underset{y \in\left[N_{2}\right], \boldsymbol{h}\in [\pm N_{2}]^{s}}{\mathbf{E}}\left\|\mathbf{E}\left(\left.\left(\Delta_{\boldsymbol{h} \mid \boldsymbol{e_{2}}} F\right)_{y} \right\rvert \mathcal{B}_{(q L, q)}\right)\right\|_{2}^{2} \geq \delta N_{1},
		\end{equation*}
		then
		\begin{equation*}
			\underset{y \in\left[N_{2}\right],\boldsymbol{h}\in [\pm N_{2}]^{s}}{\mathbf{E}}\left| \underset{x \in \left[N_{1}\right],\alpha \in \mathcal{A}}{\mathbf{E}} \Delta_{\boldsymbol{h} | \boldsymbol{e_{2}}} f_{\alpha}(x, y) \right| \geq   \delta^{C} .
		\end{equation*}
		
	\end{enumerate}
\end{lemma}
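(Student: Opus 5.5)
Part (i) is quoted from \cite[Lemma 3.6]{KKL24 a}, so the work is part (ii). The plan is to pass from the $\ell^2$ mass of the conditional expectation to a correlation of $\Delta_{\boldsymbol{h}|\boldsymbol{e_2}}F$ with a structured function, and then move the average over $\alpha$ outside by Cauchy--Schwarz.

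\emph{Step 1: one slice.} Fix $y$ and $\boldsymbol{h}$, write $G=(\Delta_{\boldsymbol{h}|\boldsymbol{e_2}}F)_y$, and set $P=\mathbf{E}(G\rvert\mathcal{B}_{(qL,q)})$. By self-adjointness (Lemma \ref{property of conditional expectation}(i)),
\[
\|P\|_2^2=\langle G,P\rangle .
\]
Expanding $\Delta_{\boldsymbol{h}}$ of the average $F=\mathbf{E}_\alpha f_\alpha$ gives
\[
G(x)=\mathbf{E}_{\boldsymbol{\alpha}\in\mathcal{A}^{\{0,1\}^s}}\prod_{\omega}\mathcal{C}^{|\omega|}f_{\alpha_\omega}(x,y+\omega\cdot\boldsymbol{h}),
\]
where $\mathcal{C}$ is complex conjugation. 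The factor with $\omega=0$ is $f_{\alpha_0}$, and the remaining $2^s-1$ factors depend on $\alpha_\omega$ for $\omega\neq 0$.

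\emph{Step 2: Cauchy--Schwarz to restore a single average.} After averaging, the hypothesis reads
\[
\delta N_1\le \mathbf{E}_{y,\boldsymbol{h}}\,\mathbf{E}_{\boldsymbol{\alpha}}\sum_x \Big(\prod_{\omega}\cdots\Big)\overline{P_{y,\boldsymbol{h}}(x)} .
\]
Here $P_{y,\boldsymbol{h}}$ is $1$-bounded and supported in $[N_1]$ up to $O(qL)$ boundary terms. Since $\Delta$ is built from $s$ commuting single differences, I would apply Cauchy--Schwarz $s$ times, once in each coordinate of the cube, in the standard Gowers--Cauchy--Schwarz fashion. Each application duplicates one $\alpha_\omega$ family and lets the dependence on distinct $\alpha_\omega$ collapse. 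After $s$ steps the inner expression has the form
\[
\Big|\mathbf{E}_{x,\alpha}\Delta_{\boldsymbol{h}'|\boldsymbol{e_2}}f_\alpha(x,y)\Big|,
\]
with a single $\alpha$, averaged over $y,\boldsymbol{h}'$. The structured function $P$ disappears because it is absorbed into the bound $|P|\le 1$ at the first application.

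This is the delicate step. The simplest route I would try first is different: use $\|P\|_2^2\le\|P\|_1$ (valid since $|P|\le 1$) together with the formula of Lemma \ref{property of simple partition}(i). This writes the hypothesis as
\[
\mathbf{E}_{y,\boldsymbol{h}}\frac{1}{N_1}\sum_{\text{atoms } I}|I|\,\Big|\mathbf{E}_{x\in I}G(x)\Big|^2\ge\delta .
\]
Because $L\ge\delta N_1$, each atom has length $L$ and there are at most $\delta^{-1}q$ atom-classes meeting $[N_1]$. Pigeonholing gives an arithmetic progression $I$ (residue class $r$ mod $q$, block $s$) with
\[
\mathbf{E}_{y,\boldsymbol{h}}\Big|\mathbf{E}_{x\in I}(\Delta_{\boldsymbol{h}|\boldsymbol{e_2}}F)(x,y)\Big|^2\gtrsim\delta^{2}.
\]
I then remove the restriction $x\in I$ by replacing $f_\alpha$ with $f'_\alpha=f_\alpha\mathbf{1}_{I}(x)$. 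This is harmless because the $\boldsymbol{e_2}$-differences do not move $x$, so $\Delta_{\boldsymbol{h}|\boldsymbol{e_2}}f'_\alpha=\mathbf{1}_I\,\Delta_{\boldsymbol{h}|\boldsymbol{e_2}}f_\alpha$.

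\emph{Step 3: the main obstacle.} We now have
\[
\mathbf{E}_{y,\boldsymbol{h}}\Big|\mathbf{E}_{\boldsymbol{\alpha}}\,\mathbf{E}_{x\in I}\prod_\omega\cdots\Big|\ge\delta^{C},
\]
where the $2^s$ factors carry independent $\alpha_\omega$. We need the same $\alpha$ in all factors. Here I would exploit the extra absolute value and $x$-average by writing
\[
\big|\mathbf{E}_{x\in I}\mathbf{E}_{\boldsymbol\alpha}\cdots\big|^2=\mathbf{E}_{x,x'}\cdots,
\]
and then apply Cauchy--Schwarz in $(y,\boldsymbol{h})$, using the parallelepiped structure along $\boldsymbol{e_2}$. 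Each doubling of the $h_j$ variables pairs $f_{\alpha}$ with $\overline{f_{\alpha}}$ for the same $\alpha$, as in the proof that $\|\cdot\|_{U^s}$ of an average dominates averaged cube correlations. This yields an average over $\boldsymbol{h}'\in[\pm N_2]^{s}$ of $|\mathbf{E}_{x,\alpha}\Delta_{\boldsymbol{h}'|\boldsymbol{e_2}}f_\alpha(x,y)|$, with a polynomial loss.

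The ranges $N_2\ge\delta^{-C}$ control the boundary errors from shifting $h$ in $[\pm N_2]$. Finally, the restriction $x\in I$ is converted back to $x\in[N_1]$ at a cost of a factor $|I|/N_1\ge\delta/q$. I expect this conversion to be the weak point: it is harmless only if $q$ is bounded by $\delta^{-O(1)}$, so the atom pigeonhole may have to be done over the blocks alone, with the residue handled by averaging.
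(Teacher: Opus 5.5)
\textbf{Your approach differs from the paper's, and as written it has a gap at the final step.} After Step 3 you have a lower bound for $\mathbf{E}_{y,\boldsymbol h}|\mathbf{E}_{\alpha}\frac{1}{L}\sum_{x\in I}\Delta_{\boldsymbol h|\boldsymbol{e_2}}f_\alpha(x,y)|$, i.e.\ the conclusion for the truncated functions $f_\alpha\chi_I$. You then propose to convert this back to $x\in[N_1]$ at a cost of $|I|/N_1$. No such conversion exists. The target is the absolute value of a \emph{signed} average over $x\in[N_1]$, and the contribution of $x\in[N_1]\setminus I$ can cancel that of $I$ entirely. This has nothing to do with the size of $q$.

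Your worry about $q$ is in fact unfounded. Each atom meets $[N_1]$ in at most $\lceil N_1/q\rceil$ points, so $\|P\|_2^2\le N_1\lceil N_1/q\rceil/L$. The hypothesis therefore forces $q\le 2\delta^{-2}$ and $L\le N_1/\delta$, so only $O(\delta^{-2})$ atoms meet $[N_1]$ and the pigeonhole costs only a polynomial factor. Your first idea in Step 2, absorbing $P$ via $|P|\le 1$ at the outset, also cannot work: it discards the only information the hypothesis contains.

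\textbf{The missing idea is to remove the restriction $x\in I$ by \emph{positivity}, while the $x$-variables still sit outside every absolute value.} This is exactly the paper's step ``by positivity and changing variables''. There, the argument runs as follows.
\begin{itemize}
\item Van der Corput along the atoms gives $\sum_{x,k}\mu_K(k)$ over pairs $(x,x+qk)$.
\item The factor $F(x,y)\overline{F(x+qk,y)}$ does not depend on $h_1$, so it is discarded by the triangle inequality.
\item The remaining nonnegative quantity is extended to all pairs $(x,x')\in[N_1]^2$.
\item A second van der Corput in $h_1$ yields $\mathbf{E}_{y,h_1}|\mathbf{E}_{x\in[N_1],\alpha}\Delta_{h_1|\boldsymbol{e_2}}f_\alpha(x,y)|^2$.
\item General $s$ is handled by induction, applied to the family $\mathbf{E}_{\alpha,\alpha'}f_\alpha\overline{f_{\alpha'}(\cdot+h_{s+1}\boldsymbol{e_2})}$.
\end{itemize}

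In your single-atom setup the same idea gives a short complete proof.
\begin{itemize}
\item Write $F_x:=F(x,\cdot)$. Expanding the square gives $\frac{1}{L^2}\sum_{x,x'\in I}\mathbf{E}_{y,\boldsymbol h}\Delta_{\boldsymbol h}(F_x\overline{F_{x'}})(y)$.
\item Since $F_x\overline{F_{x'}}$ is supported in $[N_2]$, the restriction $\boldsymbol h\in[\pm N_2]^s$ is automatic. So each inner term is a positive multiple of $\|F_x\overline{F_{x'}}\|_{U^s}^{2^s}\ge 0$.
\item Hence you may enlarge $I\times I$ to $[N_1]^2$, losing only a factor $(L/N_1)^2\ge\delta^2$.
\item The triangle inequality for $\|\cdot\|_{U^s}$ (a seminorm also when $s=1$) together with Jensen gives $\|F_x\overline{F_{x'}}\|_{U^s}^{2^s}\le\mathbf{E}_{\alpha,\alpha'}\|f_\alpha(x,\cdot)\overline{f_{\alpha'}(x',\cdot)}\|_{U^s}^{2^s}$.
\item Averaging over $(x,x')\in[N_1]^2$ and re-expanding yields $\mathbf{E}_{y,\boldsymbol h}|\mathbf{E}_{x\in[N_1],\alpha}\Delta_{\boldsymbol h|\boldsymbol{e_2}}f_\alpha(x,y)|^2\gtrsim\delta^{O(1)}$, which implies the claim since everything is $1$-bounded.
\end{itemize}
This replaces both your loosely described ``$s$ Cauchy--Schwarz steps'' and the paper's induction on $s$ with a single use of the Gowers triangle inequality.
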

\begin{proof}
	We prove \eqref{dual-difference interchange,(ii)} by induction on $s$. For the base case $s=1$, by expanding the $\ell^{2}$-norm,  using van der Corput's inequality and changing variables, we have
	\begin{equation*}
		\Bigg|\underset{\substack{y \in\left[N_{2}\right],\\ \left|h_{1}\right|\leq N_{2}}}{\mathbf{E}} \sum_{x,k} \mu_{K}(k) F(x, y) \overline{F(x+q k, y) F\left(x, y+h_{1}\right)} F\left(x+q k, y+h_{1}\right)\!\Bigg|\! \gtrsim \delta N_{1}
	\end{equation*}
	for some $K\asymp \delta L$. Further,
	\begin{equation*}
		\underset{\alpha, \alpha^{\prime} \in \mathcal{A}}{\mathbf{E}} \sum_{x,y, k}\left|\underset{\left|h_{1}\right| \leq N_{2}}{\mathbf{E}} f_{\alpha}\left(x, y+h_{1}\right) \overline{f_{\alpha^{\prime}}\left(x+qk, y+h_{1}\right)}\right| \gtrsim \delta N_{1}N_{2} K \gtrsim \delta^{3} N_{1}^{2}N_{2}.
	\end{equation*}
	By positivity and changing variables,
	\begin{equation*}
		\underset{\substack{x,x^{\prime}\in\left[N_{1}\right],\\ \alpha, \alpha^{\prime} \in \mathcal{A}}}{\mathbf{E}} \sum_{y}\left|\underset{\left|h_{1}\right| \leq N_{2}}{\mathbf{E}} f_{\alpha}\left(x, y+h_{1}\right) \overline{f_{\alpha^{\prime}}\left(x^{\prime}, y+h_{1}\right)}\right| \gtrsim \delta^{3}N_{2}.
	\end{equation*}
	By van der Corput's inequality and changing variables again, we obtain
	\begin{equation*}
		\Bigg|\underset{\substack{y \in \left[N_{2}\right], \\ x,x^{\prime} \in  \left[N_{1}\right], \\ \alpha, \alpha^{\prime} \in \mathcal{A}}}{\mathbf{E}} \sum_{h_{1}} \mu_{H}\left(h_{1}\right) \Delta_{h_{1}|\boldsymbol{e_{2}}} f_{\alpha}(x, y) \overline{\Delta_{h_{1} | \boldsymbol{e_{2}}} f_{\alpha^{\prime}}\left(x^{\prime}, y\right)}\Bigg| \gtrsim \delta^{6}
	\end{equation*}
	for some $H\asymp\delta^{6} N_{2}$. Furthermore,
	\begin{equation*}
		\underset{y\in \left[N_{2}\right],\left|h_{1}\right| \leq  N_{2}}{\mathbf{E}}\left|\underset{x \in \left[N_{1}\right], \alpha \in \mathcal{A}}{\mathbf{E}} \Delta_{h_{1} | \boldsymbol{e_{2}}} f_{\alpha}(x, y)\right|^{2} \gtrsim \delta^{12}.
	\end{equation*}
	The desired result then follows from the popularity principle.
	
	 Assuming the statement holds for $s$, we treat the case $s+1$. We first rewrite the core condition as
	\begin{equation*}
		\underset{\substack{y \in\left[N_{2}\right],|h_{s+1}|\leq N_{2},\\ \boldsymbol{h}\in[\pm
		N_{2}]^{s}}}{\mathbf{E}}\left\|\mathbf{E}\left(\left. \!\!\!\left(\Delta_{\boldsymbol{h} \mid \boldsymbol{e_{2}}} \left(\underset{\alpha, \alpha^{\prime} \in \mathcal{A}}{\mathbf{E}} f_{\alpha} \overline{f_{\alpha^{\prime}}\left(\cdot+h_{s+1} \boldsymbol{e}_{2}\right)}\right)\right)_{y} \right\rvert \mathcal{B}_{(q L, q)}\!\!\right)\right\|_{2}^{2}  \geq \delta N_{1}.
	\end{equation*}
	By the popularity principle (for $h_{s+1}$) and the inductive hypothesis, we have
	\begin{equation*}
		\underset{\substack{y \in\left[N_{2}\right],|h_{s+1}|\leq N_{2},\\ \boldsymbol{h}\in[\pm N_{2}]^{s}}}{\mathbf{E}}\Bigg|\underset{\substack{x\in \left[N_{1}\right] \\ \alpha, \alpha^{\prime} \in \mathcal{A}}}{\mathbf{E}} \Delta_{\boldsymbol{h}| \boldsymbol{e_{2}}} f_{\alpha}(x, y) \overline{\Delta_{\boldsymbol{h} | \boldsymbol{e_{2}}} f_{\alpha^{\prime}}\left(x, y+h_{s+1}\right)}\Bigg|  \geq   \delta^{C}
	\end{equation*}
	for some $C=C(s)>0$. Squaring both sides above yields
	\begin{equation*}
		\begin{split}
			\underset{\substack{y 	\in\left[N_{2}\right],|h_{s+1}|\leq N_{2},\\ x,x^{\prime}\in \left[N_{1}\right],\\ \alpha,\alpha^{\prime},\beta,\beta^{\prime}\in\mathcal{A},\\ \boldsymbol{h}\in[\pm N_{2}]^{s}}}{\mathbf{E}}&\Delta_{\boldsymbol{h} | \boldsymbol{e_{2}}} f_{\alpha}(x, y)\overline{\Delta_{\boldsymbol{h}| \boldsymbol{e_{2}}} f_{\beta}(x^{\prime}, y)} \\ &\times\overline{\Delta_{\boldsymbol{h} | \boldsymbol{e_{2}}} f_{\alpha^{\prime}}\left(x, y+h_{s+1}\right)}\Delta_{\boldsymbol{h} | \boldsymbol{e_{2}}} f_{\beta^{\prime}}\left(x^{\prime}, y+h_{s+1}\right)  \geq  \delta^{2C}.
		\end{split}
	\end{equation*}	
	Further,
	\begin{equation*}
		\underset{\substack{x,x^{\prime}\in \left[N_{1}\right],\\ \alpha^{\prime},\beta^{\prime}\in\mathcal{A},\\ \boldsymbol{h}\in[\pm N_{2}]^{s}}}{\mathbf{E}}\sum_{y} \left|\underset{|h_{s+1}|\leq  N_{2}}{\mathbf{E}}\!\!\overline{\Delta_{\boldsymbol{h} | \boldsymbol{e_{2}}} f_{\alpha^{\prime}}\left(x, y+h_{s+1}\right)} \Delta_{\boldsymbol{h} | \boldsymbol{e_{2}}} f_{\beta^{\prime}}\left(x^{\prime}, y+h_{s+1}\right)\right| \geq\delta^{2C}N_{2}.
	\end{equation*}	
The proof is then completed by applying the same reasoning as in the base case.
\end{proof}

\begin{remark}
	We briefly explain how the expectation and difference operators ``commute'' in the setting of Lemma \ref{dual-difference interchange}. We start from \eqref{dual-difference interchange,(i)}. One can naturally obtain from the definition of Gowers norms that
	\begin{equation*}
		\|F\|_{U^{s}}^{2^{s}} \geq \delta N^{s+1} \Rightarrow \underset{\boldsymbol{h} \in[ \pm N]^{s-2}}{\mathbf{E}}\left\|\Delta_{\boldsymbol{h}} F\right\|_{U^{2}}^{4} \gtrsim_{s} \delta N^{3}.
	\end{equation*}
Substituting  \(F=\mathbf{E}_{\alpha \in \mathcal{A}} f_\alpha\) gives
	\begin{equation*}
		\underset{\boldsymbol{h} \in[ \pm N]^{s-2}}{\mathbf{E}}\left\|\Delta_{\boldsymbol{h}}\left(\underset{\alpha \in \mathcal{A}}{\mathbf{E}} f_{\alpha}\right)\right\|_{U^{2}}^{4} \gtrsim_{s} \delta N^{3}.
	\end{equation*}
Comparing this with the conclusion of \eqref{dual-difference interchange,(i)} clarifies the nature of the ``dual-difference interchange'' principle.
		
	The situation in \eqref{dual-difference interchange,(ii)} is less straightforward. In fact, it shares a similar flavor with the following more intuitive statement, whose proof is omitted as it is analogous to that of \eqref{dual-difference interchange,(ii)}. Given $s\geq 1$, there exists a constant $C=C(s)>0$  such that the following holds. Let \(\delta\in(0,1/10)\) and \(N_{1},N_{2}\ge 1\).
	Let \(\mathcal{A}\) be a nonempty finite set. Suppose that \(\varphi:\mathbb{Z}\to\mathbb{C}\) is \(1\)-bounded, and that for every \(\alpha\in\mathcal{A}\), \(f_{\alpha}:\mathbb{Z}^{2}\to\mathbb{C}\) is \(1\)-bounded and supported in \([N_{1}]\times[N_{2}]\). Set \(F= \mathbf{E}_{\alpha\in\mathcal{A}} f_{\alpha}\). If  $N_{1},N_{2}\geq  \delta^{-C}$ and
	\begin{equation*}
		\underset{y\in[N_{2}],\boldsymbol{h}\in[\pm N_{2}]^{s}}{\mathbf{E}}\left|\underset{x\in[N_{1}]}{\mathbf{E}} \varphi(x) \Delta_{\boldsymbol{h}| \boldsymbol{e_{2}}} F(x, y)\right| \geq \delta,
	\end{equation*}
	then
	\begin{equation*}
		\underset{y\in[N_{2}],\boldsymbol{h}\in[\pm N_{2}]^{s}}{\mathbf{E}}\left| \underset{x\in [N_{1}], \alpha \in \mathcal{A}}{\mathbf{E}} \Delta_{\boldsymbol{h}|\boldsymbol{e_{2}}} f_{\alpha}(x, y) \right| \geq  \delta^{C} .
	\end{equation*}
\end{remark}

The key point in Lemma \ref{dual-difference interchange}\eqref{dual-difference interchange,(ii)} is that the difference and conditional expectation operators act on different coordinates. The situation becomes quite different when they act on the same coordinate; see the following lemma. 	

\begin{lemma}\label{Claim 3}
	Let \(\delta\in(0,1/10)\) and \(s,q,L,N_{1},N_{2}\ge 1\) satisfy
	\[
	N_{2}\ge \delta^{-3},\qquad L\ge \delta N_{2},
	\]
	and let \(f:\mathbb{Z}^{2}\to\mathbb{C}\) be a \(1\)-bounded function supported in \([N_{1}]\times[N_{2}]\). If
	\begin{equation*}
		\underset{x \in\left[N_{1}\right], \boldsymbol{h}\in [\pm N_{2}]^{s}}{\mathbf{E}}\left\|\mathbf{E}\left(\left.\left(\Delta_{\boldsymbol{h} \mid \boldsymbol{e_{2}}} f\right)_{x} \right\rvert \mathcal{B}_{(q L, q)}\right)\right\|_{2}^{2} \geq \delta N_{2},
	\end{equation*}
	then
	\begin{equation*}
		\underset{x\in[N_{1}]}{\mathbf{E}}\left\| f_{x}\right\|_{U^{s+1}}^{2^{s+1}}\gtrsim_{s} \delta^{3} N_{2}^{s+2}.
	\end{equation*}
\end{lemma}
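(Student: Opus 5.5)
The plan is to write the conditional expectation as a sum over atoms, so that the weight in front of each pair of points becomes a product of indicator functions. A product weight can be absorbed into the functions of a Gowers inner product, and the Gowers--Cauchy--Schwarz inequality then bounds everything by $\|f_x\|_{U^{s+1}}$. The point to watch is that the conditional expectation should cost exactly one extra degree, not two.

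Fix $x\in[N_1]$, write $g_{\boldsymbol h}:=\Delta_{\boldsymbol h}f_x$, and let $P$ run over the atoms of $\mathcal{B}_{(qL,q)}$ (progressions of step $q$ and length $L$). Since $\mathbf{E}(g|\mathcal{B})$ is constant on atoms,
\begin{equation*}
\left\|\mathbf{E}(g_{\boldsymbol h}|\mathcal{B}_{(qL,q)})\right\|_2^2=\frac1L\sum_P\Big|\sum_{y}g_{\boldsymbol h}(y)1_P(y)\Big|^2=\frac1L\sum_P\sum_{y,y'}1_P(y)1_P(y')\,g_{\boldsymbol h}(y)\overline{g_{\boldsymbol h}(y')}.
\end{equation*}
Averaging over $\boldsymbol h\in[\pm N_2]^s$ and substituting $y'=y+t$, each inner expression becomes
\begin{equation*}
\sum_{y,t}\mathbf{E}_{\boldsymbol h}\,\Delta_{(\boldsymbol h,t)}(1_Pf_x)(y),
\end{equation*}
up to boundary effects. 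I would handle these by extending the $\boldsymbol h$-range, which costs only a constant factor $\lesssim_s 1$ because $f_x$ is supported in $[N_2]$. The indicators $1_P$ sit only on the two vertices $\omega\in\{0,1\}^s\times\{0\}$ and $\omega=(\boldsymbol 0,1)$. They could equally be placed on every vertex whose first $s$ coordinates vanish; this is harmless because $1_P$ depends only on $y$ and $y'$. This is the key structural observation, and it is why one extra derivative (namely $t$) appears rather than two. For each fixed atom $P$ the expression is therefore a Gowers inner product of dimension $s+1$ in which two of the functions are $1_Pf_x$ and the others are $f_x$.

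Next I apply the Gowers--Cauchy--Schwarz inequality on $\mathbb{Z}$, using $\ell^1$-normalized sums so that the identity $\|f\|_{U^{s+1}}^{2^{s+1}}=\sum\Delta f$ is respected. For each atom this gives
\begin{equation*}
\sum_{y,\boldsymbol h,t}(\cdots)\le \|1_Pf_x\|_{U^{s+1}}^{2}\,\|f_x\|_{U^{s+1}}^{2^{s+1}-2}.
\end{equation*}
Summing over $P$, the hypothesis turns into
\begin{equation*}
\delta N_2\lesssim_s \frac{1}{N_2^{s}L}\,\mathbf{E}_x\Big(\|f_x\|_{U^{s+1}}^{2^{s+1}-2}\sum_P\|1_Pf_x\|_{U^{s+1}}^2\Big),
\end{equation*}
where $N_2^{s}$ is the normalization of $\mathbf{E}_{\boldsymbol h}$. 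To control $\sum_P\|1_Pf_x\|_{U^{s+1}}^2$ I would use the trivial estimate $\|g\|_{U^{s+1}}^{2^{s+1}}\le \|g\|_1\,\|g\|_\infty^{2^{s+1}-1}(2N_2)^{s+1}$ for $g$ supported in $[N_2]$, together with the facts that the atoms are disjoint and that their intersections with $[N_2]$ have total size at most $N_2$. H\"older's inequality over $P$ and over $x$ then converts the right-hand side into a power of $\mathbf{E}_x\|f_x\|_{U^{s+1}}^{2^{s+1}}$.

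This accounting of exponents is the step I expect to be the main obstacle, since it is where the factor $\delta^3$ has to come from. The assumption $L\ge\delta N_2$ costs a factor $\delta$ in the $1/L$ normalization. Atoms that meet $[N_2]$ only partially contribute negligibly, and $N_2\ge\delta^{-3}$ is exactly what allows those edge terms and the boundary terms from extending the $\boldsymbol h$-range to be discarded. If the Gowers--Cauchy--Schwarz route does not close with exponent $3$, I would first try an alternative: bound $|\sum_y g_{\boldsymbol h}1_P|^2$ by Cauchy--Schwarz in the atom, and then apply van der Corput with shifts $t=qk$, $|k|\le L$. This is the route suggested by the Fej\'er-kernel remark preceding Lemma~\ref{dual-difference interchange}. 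After that, the positivity of $\mathbf{E}_{\boldsymbol h,t}\Delta_{(\boldsymbol h,t)}f_x$ and the bound $L\ge\delta N_2$ would let me pass from a restricted range of $t$ to the full range $[\pm N_2]$, at the cost of powers of $\delta$.
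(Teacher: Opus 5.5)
\textbf{The main route does not reach the stated exponent.} Your Gowers--Cauchy--Schwarz computation is valid as far as it goes. The expansion is exact, since $\Delta_{\boldsymbol h}f_x\equiv 0$ unless $\boldsymbol h\in[\pm N_2]^s$. The indicator sits only at the vertices $(\boldsymbol 0,0)$ and $(\boldsymbol 0,1)$, as you say in words; your displayed $\Delta_{(\boldsymbol h,t)}(1_Pf_x)$ would wrongly put $1_P$ on every vertex. The problem is that this route cannot deliver the $\delta^3$ the lemma asserts.

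After Gowers--Cauchy--Schwarz, $\|f_x\|_{U^{s+1}}$ survives only to the power $2^{s+1}-2$, and the two factors $\|1_Pf_x\|_{U^{s+1}}$ must be bounded trivially. So you get a lower bound only for $X^{1-2^{-s}}$, where $X:=\mathbf{E}_x\|f_x\|_{U^{s+1}}^{2^{s+1}}$, and inverting divides the $\delta$-exponent by $1-2^{-s}$. Concretely, take $q=1$, $L=\delta N_2$, so there are about $\delta^{-1}$ atoms of length $L$. Your estimate $\|1_Pf_x\|_{U^{s+1}}^{2^{s+1}}\lesssim_s LN_2^{s+1}$ then gives
\[
\delta N_2\lesssim_s \frac{1}{N_2^{s}L}\,\delta^{-1}\bigl(\delta N_2^{s+2}\bigr)^{2^{-s}}X^{1-2^{-s}},
\qquad\text{i.e.}\qquad
X\gtrsim_s \delta^{\frac{3-2^{-s}}{1-2^{-s}}}N_2^{s+2}.
\]
For $s=1$ this is only $\delta^5N_2^3$.

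There is also a missing step even for a weaker power. H\"older over $P$ needs a bound on the number of atoms, and for large $q$ there can be $\asymp N_2$ atoms meeting $[N_2]$. You must first discard the atoms with $|P\cap[N_2]|\le\delta L/2$; they contribute at most $\delta N_2/2$, and what remains is $O(\delta^{-2})$ atoms. With that fix your route gives $X\gtrsim_s\delta^{C(s)}N_2^{s+2}$, which would suffice for Implication~3, but it is not the lemma as stated. Your remarks about boundary terms are also off: there are none, partial atoms need not be negligible, and $N_2\ge\delta^{-3}$ plays no role in this computation.

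\textbf{The fallback is the paper's proof, but it needs to be set up precisely.} Apply van der Corput on each atom to $\Delta_{\boldsymbol h}f_x$ itself, not to $1_P\Delta_{\boldsymbol h}f_x$, so that shifted points may leave $P$. Use shifts $qk$ with Fej\'er weights $\mu_K$ and $K\asymp\delta L$; allowing $|k|$ up to $L$, as you suggest, makes the error terms larger than the main term. Summing over atoms then removes the indicators and gives
\[
\mathbf{E}_{x}\sum_k\mu_K(k)\,\|\Delta_{qk}f_x\|_{U^s}^{2^s}\gtrsim_s \delta N_2^{s+1}.
\]
The positivity you need is termwise: $\|\Delta_tf_x\|_{U^s}^{2^s}\ge 0$ for every $t$, and $\sum_t\|\Delta_tf_x\|_{U^s}^{2^s}=\|f_x\|_{U^{s+1}}^{2^{s+1}}$. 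Together with $\mu_K\le 1/K$, this bounds the left-hand side by $K^{-1}X$. Hence
\[
X\gtrsim_s \delta KN_2^{s+1}\gtrsim \delta^2LN_2^{s+1}\ge\delta^3N_2^{s+2}.
\]
The three factors of $\delta$ come from the hypothesis, from $K\asymp\delta L$, and from $L\ge\delta N_2$. The condition $N_2\ge\delta^{-3}$ gives $\delta L\ge\delta^{-1}$, so an admissible $K\ge1$ exists and van der Corput applies. Since this step is only gestured at in your proposal, the proposal as written does not establish the stated bound.
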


\begin{proof}
	By expanding the $\ell^{2}$-norm, using van der Corput's inequality and changing variables (as in the proof of Lemma \ref{dual-difference interchange}\eqref{dual-difference interchange,(ii)}), we obtain
	\begin{equation*}
		\Bigg|\underset{x\in[N_{1}],\boldsymbol{h}\in[\pm N_{2}]^{s}}{\mathbf{E}} \sum_{y,k}\mu_{K}(k)\Delta_{\boldsymbol{h},qk} f_{x}(y)\Bigg| \gtrsim \delta N_{2},
	\end{equation*}
	where $K\asymp \delta L$. That is
	\begin{equation*}
		\underset{x\in[N_{1}]}{\mathbf{E}}\sum_{k} \mu_{K}(k)\left\|\Delta_{qk} f_{x}\right\|_{U^{s}}^{2^{s}}\gtrsim_{s} \delta N_{2}^{s+1}.
	\end{equation*}
	The result then follows easily. 	
\end{proof}

We also need the following $U^{2}$-inverse theorem.
\begin{lemma}\textup{(\cite[Lemma A.1]{PP24})}\label{U^2 inverse}
	Let \(\delta\in(0,1)\) and \(N\ge 1\). Let \(f:\mathbb{Z}\to\mathbb{C}\) be a \(1\)-bounded function supported in an interval of length \(N\). If $\|f\|_{U^{2}}^{4} \geq \delta N^{3}$, then there exists $\alpha\in\mathbb{T}$ such that
	\begin{equation*}
		\Bigg|\sum_{x} f(x) e(\alpha x)\Bigg|\geq \delta N.
	\end{equation*}
\end{lemma}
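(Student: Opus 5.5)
The plan is the standard Fourier-analytic argument on $\mathbb{Z}$. Write $\hat f(\alpha):=\sum_{x} f(x)e(\alpha x)$ for $\alpha\in\mathbb{T}$. This is a trigonometric polynomial, because $f$ is finitely supported, and it is continuous on the compact set $\mathbb{T}$. We will show that
\begin{equation*}
\sup_{\alpha\in\mathbb{T}}|\hat f(\alpha)|\geq \delta^{1/2}N .
\end{equation*}
Since $\delta\in(0,1)$ gives $\delta^{1/2}\ge\delta$, and the supremum is attained by continuity and compactness, this yields some $\alpha$ with $|\sum_x f(x)e(\alpha x)|\geq\delta N$. (Whether the sign convention is $e(\alpha x)$ or $e(-\alpha x)$ is immaterial, since $\alpha$ may be replaced by $-\alpha$.)

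The first step is the identity
\begin{equation*}
\|f\|_{U^2}^4=\sum_{x,h_1,h_2} f(x)\overline{f(x+h_1)}\,\overline{f(x+h_2)}f(x+h_1+h_2)=\int_{\mathbb{T}}|\hat f(\alpha)|^4\,d\alpha .
\end{equation*}
The first equality is the definition in Definitions and Basic Properties \ref{Gowers norm}\eqref{Gowers norm,(ii)}, with $s=2$, after unwinding $\Delta_{h_1}\Delta_{h_2}$; the order of conjugations is irrelevant, since the sum is real and symmetric. For the second equality, reparametrise by $a=x$, $b=x+h_1+h_2$, $c=x+h_1$, $d=x+h_2$. The sum then runs over all quadruples with $a+b=c+d$, and it equals $\sum_{a+b=c+d}f(a)f(b)\overline{f(c)f(d)}$. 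Inserting $\int_{\mathbb{T}}e(\alpha(a+b-c-d))\,d\alpha=\mathbf{I}(a+b=c+d)$ and exchanging the finite sum with the integral gives $\int|\hat f|^4$.

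The second step bounds the integral by
\begin{equation*}
\int_{\mathbb{T}}|\hat f|^4\le \Big(\sup_{\mathbb{T}}|\hat f|\Big)^2\int_{\mathbb{T}}|\hat f|^2=\Big(\sup_{\mathbb{T}}|\hat f|\Big)^2\|f\|_2^2\le \Big(\sup_{\mathbb{T}}|\hat f|\Big)^2 N ,
\end{equation*}
where we use Parseval and the fact that $f$ is $1$-bounded and supported in an interval of length $N$. Combining this with the hypothesis $\|f\|_{U^2}^4\geq\delta N^3$ gives $(\sup|\hat f|)^2\geq\delta N^2$, which is the claim above.

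There is no genuine obstacle here. The only point needing care is verifying the Fourier identity for $\|f\|_{U^2}^4$ with the paper's conventions, namely unnormalised sums and $\Delta_h f=f\overline{f(\cdot+h)}$. Everything else is Parseval and a trivial $L^\infty$–$L^2$ bound.
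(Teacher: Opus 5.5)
Your argument is correct and is the standard one. The identity $\|f\|_{U^2}^4=\int_{\mathbb{T}}|\hat f|^4$ followed by $\int_{\mathbb{T}}|\hat f|^4\le\|\hat f\|_\infty^2\|f\|_2^2\le N\|\hat f\|_\infty^2$ is the usual proof of the cited \cite[Lemma A.1]{PP24}, which the paper simply quotes without proof, and it even gives the stronger bound $\sup_{\alpha}|\hat f(\alpha)|\ge\delta^{1/2}N$.
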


We prove Proposition \ref{degree-lowering} by induction on the dimension $n$. We first establish the inductive step.

\begin{proof}[Proof of Proposition \ref{degree-lowering} for $n\geq 3$ assuming the case $n-1$]	
	
	\leavevmode
	
	We need to prove the three implications in Proposition \ref{degree-lowering}. Throughout the proofs, all constants $C$ depend only on $n,k,\boldsymbol{m}$ and may vary from line to line. For each $0\leq i\leq n$, we may assume without loss of generality that $f_{i}$ is supported in $\prod_{1\leq j\leq n}[2^{\mathbf{I}(j=i)}N^{m_{j}}]$. Recall also that at this stage Theorem \ref{Lambda_{N}-inverse with phase} holds for $n-1$ (see the remark following Proposition \ref{degree-lowering}).

	\textbf{Implication 1}: if \eqref{degree-lowering,(a)} holds for some $s\geq 2$, then \eqref{degree-lowering,(b)} holds for $s^{\prime}=s$.

	By the stashing trick for $f_{n}$, we obtain from \eqref{degree-lowering,(a)} that if $N$ is large, then
	\begin{equation*}
		\underset{\widehat{\boldsymbol{x}}^{n} \in \prod_{1 \leq i \leq n-1}\left[N^{m_{i}}\right]}{\mathbf{E}}\left\|\left(F^{(n)}\right)_{\widehat{\boldsymbol{x}}^{n}}\right\|_{U^{s}}^{2^{s}} \geq \delta^{C} N^{m_{n}(s+1)},
	\end{equation*}
	where $F^{(n)}$ is given by \eqref{dual function}.
	By the popularity principle, Lemma \ref{dual-difference interchange}\eqref{dual-difference interchange,(i)}, Lemma \ref{U^2 inverse}, and a change of variables, for all $\boldsymbol{h}$ in some $E\subset [\pm 2N^{m_{n}}]^{s-2}$ of density $\geq \delta^{C}$ and for all $\widehat{\boldsymbol{x}}^{n}$ in some $E'\subset\prod_{1 \leq i \leq n-1}[N^{m_{i}}]$ (depending on $\boldsymbol{h}$) of density $\geq \delta^{C}$, there is $\beta =\beta (\boldsymbol{h},\widehat{\boldsymbol{x}}^{n})\in\mathbb{T}$ such that
	\begin{equation*}
		\begin{split}
			\Bigg|\underset{x_{n}\in \left[N^{m_{n}}\right], r\in [N]}{\mathbf{E}}
			&\Delta_{\boldsymbol{h} | \boldsymbol{e}_{\boldsymbol{n}}} f_{0}(\boldsymbol{x}) \prod_{1 \leq j \leq n-1} \Delta_{\boldsymbol{h} | \boldsymbol{e}_{\boldsymbol{n}}} f_{j}\left(\boldsymbol{x}+r^{m_{j}} \boldsymbol{e}_{\boldsymbol{j}}\right)  \\
			&\times e \Bigg(\beta x_{n}+\beta r^{m_{n}}+\sum_{1\leq j\leq k}\partial_{\boldsymbol{h}|\boldsymbol{e_{n}}}\alpha_{j}(\boldsymbol{x}) r^{m_{n+j}}\Bigg) \Bigg|\geq \delta^{C}.
		\end{split}
	\end{equation*}
	Further, by the triangle inequality for $x_{n}$, summing both sides over $\widehat{\boldsymbol{x}}^{n}\in E'$, and the popularity principle, we find a subset $E^{\prime\prime}\subset[ N^{m_{n}}]$ (depending on $\boldsymbol{h}$) of density $\geq \delta^{C}$ such that for all $x_{n}\in E^{\prime\prime}$,
	\begin{align*}
		&\underset{\widehat{\boldsymbol{x}}^{n} \in E^{\prime}}{\mathbf{E}}\Bigg|\underset{r \in [N]}{\mathbf{E}} \!\prod_{1 \leq j \leq n-1} \!\!\!\Delta_{\boldsymbol{h} | \boldsymbol{e_{n}}} f_{j}\left(\boldsymbol{x}+r^{m_{j}} \boldsymbol{e}_{\boldsymbol{j}}\right)
		e\Biggl(\beta r^{m_{n}}+\sum_{1\leq j\leq k}\partial_{\boldsymbol{h}|\boldsymbol{e_{n}}}\alpha_{j}(\boldsymbol{x}) r^{m_{n+j}}\Biggr)\Bigg| \\
		&\ \geq \delta^{C}.
	\end{align*}
	For fixed $\boldsymbol{h}$ and $x_{n}$, we can remove the absolute value by introducing a suitable $1$-bounded factor $\psi$ as follows
	\begin{equation*}
		\begin{split}
			\underset{\substack{\widehat{\boldsymbol{x}}^{n} \in \prod_{1 \leq i \leq n-1}\left[N^{m_{i}}\right],\\ r\in [N]}}{\mathbf{E}} &(\psi\chi_{E^{\prime}})(\widehat{\boldsymbol{x}}^{n})\prod_{1 \leq j \leq n-1} \Delta_{\boldsymbol{h} | \boldsymbol{e_{n}}} f_{j}\left(\boldsymbol{x}+r^{m_{j}} \boldsymbol{e}_{\boldsymbol{j}}\right) \\
			&\times e\Biggl(\beta r^{m_{n}}+\sum_{1\leq j\leq k}\partial_{\boldsymbol{h}|\boldsymbol{e_{n}}}\alpha_{j}(\boldsymbol{x}) r^{m_{n+j}}\Biggr) \geq \delta^{C}.
		\end{split}
	\end{equation*}
	Hence, by Theorem \ref{Lambda_{N}-inverse with phase} with $n-1$, there exists $1\leq q \leq \delta^{-C}$ (depending on $\boldsymbol{h}$ and $x_{n}$) such that for any $1 \leq i \leq n-1$ and any $1\leq L \leq \delta^{C} N$,
	\begin{equation*}
		\underset{\widehat{\boldsymbol{x}}^{i,n} \in \prod_{1 \leq j \leq n-1, j \neq i}\left[ N^{m_{j}}\right]}{\mathbf{E}}\left\|\mathbf{E}\left(\left.\left(\Delta_{\boldsymbol{h}|\boldsymbol{e_{n}}}f_{i}\right)_{\widehat{\boldsymbol{x}}^{i}} \right\rvert \mathcal{B}_{\left(q^{m_{i}}  L^{m_{i}},q^{m_{i}}\right)}\right)\right\|_{2}^{2} \geq \delta^{C} N^{m_{i}}.
	\end{equation*}
	Since there are at most $\delta^{-C}$ possible values for $q$, there exists $1\leq q\leq \delta^{-C}$ independent of $\boldsymbol{h}$ and $x_{n}$ such that
	\begin{equation*}
		\underset{\substack{\widehat{\boldsymbol{x}}^{i} \in 	\prod_{1 \leq j \leq n, j \neq i}\left[ N^{m_{j}}\right],\\ \boldsymbol{h}\in\left[\pm 2N^{m_{n}}\right]^{s-2}}}{\mathbf{E}}\left\|\mathbf{E}\left(\left.\left(\Delta_{\boldsymbol{h}|\boldsymbol{e_{n}}}f_{i}\right)_{\widehat{\boldsymbol{x}}^{i}}\right\rvert \mathcal{B}_{\left(q^{m_{i}}  L^{m_{i}},q^{m_{i}}\right)}\right)\right\|_{2}^{2} \geq \delta^{C} N^{m_{i}},
	\end{equation*}
	as desired.
	
	\textbf{Implication 2}: if \eqref{degree-lowering,(b)} holds for some $s^{\prime}\geq 2$, then \eqref{degree-lowering,(c)} holds for $s^{\prime\prime}=s^{\prime}$.
	
	By the stashing trick for $f_{1}$, we obtain from $\eqref{degree-lowering,(b)}$ that if $N$ is large, then there exist some $q\geq 1$ and $L\asymp \delta^{C} N^{m_{1}}$ such that
	\begin{equation*}
		\underset{\substack{\widehat{\boldsymbol{x}}^{1} \in \prod_{2 \leq j \leq n}\left[ N^{m_{j}}\right],\\ \boldsymbol{h}\in[\pm 2N^{m_{n}}]^{s^{\prime}-2}}}{\mathbf{E}}\left\|\mathbf{E}\left(\left.\left(\Delta_{\boldsymbol{h}|\boldsymbol{e_{n}}}F^{(1)}\right)_{\widehat{\boldsymbol{x}}^{1}} \right\rvert \mathcal{B}_{(qL,q)}\right)\right\|_{2}^{2} \geq \delta^{C} N^{m_{1}},
	\end{equation*}
	where $F^{(1)}$ is given by \eqref{dual function}.
	By the popularity principle (for $x_{2},\ldots,x_{n-1}$), Lemma \ref{dual-difference interchange}\eqref{dual-difference interchange,(ii)}, and a change of variables, we have
	\begin{equation*}
		\begin{split}
			\underset{\substack{\widehat{\boldsymbol{x}}^{1} \in \prod_{2 \leq j \leq n}\left[ N^{m_{j}}\right],\\ \boldsymbol{h}\in[\pm 2N^{m_{n}}]^{s^{\prime}-2}}}{\mathbf{E}}&\Bigg|\underset{x_{1} \in\left[N^{m_{1}}\right], r \in [N]}{\mathbf{E}} \Delta_{\boldsymbol{h} | \boldsymbol{e}_{\boldsymbol{n}}} f_{0}(\boldsymbol{x}) \prod_{2 \leq j \leq n} \Delta_{\boldsymbol{h} | \boldsymbol{e}_{\boldsymbol{n}}}f_{j}\left(\boldsymbol{x}+r^{m_{j}} \boldsymbol{e}_{\boldsymbol{j}}\right) \\
			&\times e\Biggl(\sum_{1\leq j\leq k}\partial_{\boldsymbol{h}|\boldsymbol{e_{n}}}\alpha_{j}(\boldsymbol{x}) r^{m_{n+j}}\Biggr)\Bigg| \geq\delta^{C}.
		\end{split}
	\end{equation*}
	Therefore, the proof of Implication 2 can be concluded by applying the same reasoning as at the end of the proof of Implication 1.

	\textbf{Implication 3}: if \eqref{degree-lowering,(c)} holds for some $s^{\prime\prime}\geq 2$, then \eqref{degree-lowering,(a)} holds for $s=s^{\prime\prime}-1$.
	
	We derive from $\eqref{degree-lowering,(c)}$ that if $N$ is large, then there exist $q\geq 1$ and $L\asymp \delta^{C} N^{m_{n}}$ such that
	\begin{equation*}
		\underset{\substack{\widehat{\boldsymbol{x}}^{n} \in \prod_{1 \leq j \leq n-1}\left[ N^{m_{j}}\right],\\ \boldsymbol{h}\in[\pm 2N^{m_{n}}]^{s^{\prime\prime}-2}}}{\mathbf{E}}\left\|\mathbf{E}\left(\left.\left(\Delta_{\boldsymbol{h}|\boldsymbol{e_{n}}}f_{n}\right)_{\widehat{\boldsymbol{x}}^{n}} \right\rvert \mathcal{B}_{(qL,q)}\right)\right\|_{2}^{2} \geq \delta^{C} N^{m_{n}}.
	\end{equation*}
	The desired result then follows from Lemma \ref{Claim 3}.
\end{proof}

An almost identical argument shows that the case $n=1$ of Theorem \ref{Lambda_{N}-inverse with phase} implies the base case $n=2$ of Proposition \ref{degree-lowering}. Hence, assuming the former, we have proved Proposition \ref{degree-lowering}.

It remains to prove the case $n=1$ of Theorem \ref{Lambda_{N}-inverse with phase}. The main tools are two technical results from \cite{PP24} together with Weyl’s inequality from higher order Fourier analysis.

\begin{lemma}\label{technical lemma 1}
	\textup{(\cite[Lemma 6.3]{PP24})}
	Given $s\geq 1$, there exists a constant  $C=C(s)>0$ such that the following holds. Let \(\delta\in(0,1/10)\), $N\geq 1$, $\mathcal{H}\subset \mathbb{Z}^{s}$ be a finite set of size $\geq\delta N^{s}$, and $\phi:\mathbb{Z}^{s}\rightarrow\mathbb{T}$ be a phase function.
	Let \(\mathcal{A}\) be a nonempty finite set. Suppose that, for every \(\alpha\in\mathcal{A}\), \(f_{\alpha}:\mathbb{Z}\to\mathbb{C}\) is a \(1\)-bounded function supported in \([N]\). Set \(F:=\mathbf{E}_{\alpha \in \mathcal{A}} f_{\alpha}\). If
	\begin{equation*}
		\underset{\boldsymbol{h} \in \mathcal{H}}{\mathbf{E}}\left|\underset{x \in[N]}{\mathbf{E}} \Delta_{\boldsymbol{h}} F(x) \cdot e(x \phi(\boldsymbol{h}))\right| \geq \delta,
	\end{equation*}
	then
	\begin{equation*}
		\underset{\boldsymbol{h^{0}}, \boldsymbol{h^{1}} \in \mathcal{H}}{\mathbf{E}}\left|\underset{x \in[N], \alpha \in \mathcal{A}}{\mathbf{E}} \Delta_{\boldsymbol{h^{0}}-\boldsymbol{h^{1}}} f_{\alpha}(x) \cdot e\left(x \tilde{\phi}\left(\boldsymbol{h^{0}}, \boldsymbol{h^{1}} \right)\right)\right| \geq \delta^{C},
	\end{equation*}
	where
	\begin{equation*}
		\tilde{\phi}\left(\boldsymbol{h^{0}}, \boldsymbol{h^{1}}\right):=\sum_{\boldsymbol{\omega} \in\{0,1\}^{s}}(-1)^{|\boldsymbol{\omega}|} \phi\left(h_{1}^{\omega_{1}}, \ldots, h_{s}^{\omega_{s}}\right)
	\end{equation*}
	and $|\boldsymbol{\omega}|$ denotes the number of $1$'s in $\boldsymbol{\omega}$.
\end{lemma}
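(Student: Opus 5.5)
We would prove Lemma \ref{technical lemma 1} by the standard ``dual-difference interchange with a linear phase'' argument, in the spirit of Lemma \ref{dual-difference interchange}. Two facts drive it: multiplicativity of $\Delta_{\boldsymbol{h}}$, and the fact that $\Delta_{\boldsymbol{h}}F$ is a product of $2^{s}$ shifted copies of $F$, one for each vertex of the cube. This lets us expand $F=\mathbf{E}_{\alpha}f_{\alpha}$ at each vertex separately. We then apply the Cauchy--Schwarz inequality once in each of the $s$ coordinates of $\boldsymbol{h}$, duplicating that coordinate into a pair $(h_{j}^{0},h_{j}^{1})$. The purpose is to isolate a single copy of the $f_{\alpha}$'s, in a configuration whose phase has become the alternating sum $\tilde{\phi}$.

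First we remove the absolute value. We choose 1-bounded $c(\boldsymbol{h})$ with
\[
\underset{\boldsymbol{h}\in\mathcal{H}}{\mathbf{E}}\,c(\boldsymbol{h})\underset{x\in[N]}{\mathbf{E}}\Delta_{\boldsymbol{h}}F(x)e(x\phi(\boldsymbol{h}))\geq\delta,
\]
and extend $c$ by zero outside $\mathcal{H}$. We write $\Delta_{\boldsymbol{h}}F(x)=F(x)\cdot\prod_{\boldsymbol{\omega}\neq 0}\mathcal{C}^{|\boldsymbol{\omega}|}F(x+\boldsymbol{\omega}\cdot\boldsymbol{h})$, and we substitute $F(x)=\mathbf{E}_{\alpha}f_{\alpha}(x)$ only in the vertex $\boldsymbol{\omega}=0$.

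Next we perform the Cauchy--Schwarz steps, one coordinate at a time. For $h_{1}$: the terms that do not involve $h_{1}$ (the vertices with $\omega_{1}=0$, other than the base vertex, which now carries $f_{\alpha}$) are moved outside. We then apply Cauchy--Schwarz in the variables $(x,h_{2},\dots,h_{s},\alpha)$, after shifting $x\mapsto x-h_{1}$ where convenient. This doubles $h_{1}$ into $h_{1}^{0},h_{1}^{1}$. The phase $x\phi(\boldsymbol{h})$ becomes $x(\phi(h_{1}^{0},\dots)-\phi(h_{1}^{1},\dots))$, while the $F$-factors with $\omega_{1}=1$ now appear at $x+h_{1}^{0}$ and $x+h_{1}^{1}$. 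After re-centering $x$, these combine to give a difference operator in the direction $h_{1}^{0}-h_{1}^{1}$.

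We iterate this over $j=2,\dots,s$. Since $|\mathcal{H}|\geq\delta N^{s}$ and every variable lives in a range of length $O(N)$, each application of Cauchy--Schwarz costs only a polynomial factor $\delta^{O(1)}$; this is where the density of $\mathcal{H}$ is used, to normalize sums over $\mathbb{Z}^{s}$ back to averages over $\mathcal{H}$. After all $s$ steps, the $2^{s}$ copies of $\phi$ form exactly $\tilde{\phi}(\boldsymbol{h}^{0},\boldsymbol{h}^{1})$. The copies of $F$ that were bounded away at earlier stages recombine, via the standard Gowers--Cauchy--Schwarz bookkeeping, into the single expression $\mathbf{E}_{x,\alpha}\Delta_{\boldsymbol{h}^{0}-\boldsymbol{h}^{1}}f_{\alpha}(x)e(x\tilde{\phi})$, up to a change of variables in $x$ (this is legitimate because $x$ ranges over $[N]$ and the shifts have size $O(N)$). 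The final triangle inequality then yields the stated lower bound $\delta^{C}$, with $C=O(2^{s})$.

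The main obstacle is the bookkeeping inside the Cauchy--Schwarz steps: at each stage we must decide which factors to discard and which to keep. Specifically, we have to make sure that the $f_{\alpha}$ copy (and hence the average over $\alpha\in\mathcal{A}$) survives inside the absolute value. The linear phase must also be carried by $x$ in a way that stays linear after the shifts $x\mapsto x+\boldsymbol{\omega}\cdot\boldsymbol{h}$. Our first attempt would be to keep the phase attached to the base vertex and to absorb the shift-induced constant phases $e((\boldsymbol{\omega}\cdot\boldsymbol{h})\phi)$ into the 1-bounded weights, since they do not depend on $x$. Failing that, we would follow the argument of \cite[Lemma 6.3]{PP24} verbatim.
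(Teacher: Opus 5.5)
\textbf{Verdict: genuine gap.} Your overall plan is the right one. You expand one vertex of the cube as $\mathbf{E}_{\alpha}f_{\alpha}$ and apply Cauchy--Schwarz once per coordinate, with $\alpha$ among the outer variables so that it is never duplicated. This is the standard route to \cite[Lemma 6.3]{PP24}, which the paper quotes without proof.

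However, the bookkeeping you actually describe fails at the first step, which is exactly the obstacle you flag. In the coordinates $(x,\boldsymbol{h})$ the base vertex $f_{\alpha}(x)$ does not involve $h_{1}$. If you keep it inside a Cauchy--Schwarz over $(x,h_{2},\dots,h_{s},\alpha)$, it factors out of the inner $h_{1}$-sum and comes back as $|f_{\alpha}(x)|^{2}$. All information about $f_{\alpha}$ is then lost, and what gets duplicated are the factors $F$ with $\omega_{1}=1$. Iterating, the cube you build consists of copies of $F$, weighted by a power of $|f_{\alpha}(x)|$, rather than copies of $f_{\alpha}$. Your closing claim, that the copies of $F$ ``bounded away at earlier stages recombine'' into $\mathbf{E}_{x,\alpha}\Delta_{\boldsymbol{h}^{0}-\boldsymbol{h}^{1}}f_{\alpha}(x)e(x\tilde{\phi})$, has no mechanism behind it. A factor removed by Cauchy--Schwarz is bounded by $1$ and never reappears. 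The target expression must consist solely of the $2^{s}$ duplicates of the single vertex carrying $f_{\alpha}$.

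The fix is to make the $f_{\alpha}$-vertex depend on every $h_{j}$ before you start.
\begin{itemize}
\item Substitute $x\mapsto x-h_{1}-\dots-h_{s}$ once. Vertex $\boldsymbol{\omega}$ then sits at $x-(\boldsymbol{1}-\boldsymbol{\omega})\cdot\boldsymbol{h}$, and the expanded vertex is $f_{\alpha}(x-h_{1}-\dots-h_{s})$. Equivalently, you could expand the top vertex instead of the base.
\item At step $j$, discard exactly the surviving vertices with $\omega_{j}=1$. These are the ones independent of $h_{j}$, hence functions of the outer variables $(x,h^{0}_{<j},h^{1}_{<j},h_{>j},\alpha)$.
\item The weight $c(\boldsymbol{h})1_{\mathcal{H}}(\boldsymbol{h})e((x-h_{1}-\dots-h_{s})\phi(\boldsymbol{h}))$ involves every variable, so it is always kept and duplicated.
\item After $s$ steps the only vertex factors left are $\mathcal{C}^{|\boldsymbol{\epsilon}|}f_{\alpha}(x-\sum_{i}h_{i}^{\epsilon_{i}})$ for $\boldsymbol{\epsilon}\in\{0,1\}^{s}$, where $\mathcal{C}$ denotes complex conjugation. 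With $x'=x-\sum_{i}h_{i}^{0}$, their product is exactly $\Delta_{\boldsymbol{h}^{0}-\boldsymbol{h}^{1}}f_{\alpha}(x')$.
\item The phase becomes $x'\tilde{\phi}(\boldsymbol{h}^{0},\boldsymbol{h}^{1})$ plus a term independent of $(x',\alpha)$. The accumulated weight is supported on $\boldsymbol{h}^{0},\boldsymbol{h}^{1}\in\mathcal{H}$.
\end{itemize}

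For the normalisation, note that $\Delta_{\boldsymbol{h}}F\equiv 0$ unless $\boldsymbol{h}\in[\pm N]^{s}$. By the hypothesis this also forces $|\mathcal{H}|\lesssim\delta^{-1}N^{s}$. So $|\mathcal{H}|\geq\delta N^{s}$ is used once, to pass to a normalised sum over $[\pm N]^{s}$. Each Cauchy--Schwarz step then merely squares the bound, and you recover $\mathbf{E}_{\boldsymbol{h}^{0},\boldsymbol{h}^{1}\in\mathcal{H}}$ at the end with total loss $\delta^{O(2^{s})}$.
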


\begin{lemma}\label{technical lemma 2}
	\textup{(\cite[Lemma 6.4]{PP24})}
	Given $s\geq 2$, there exists a constant  $C=C(s)>0$ such that the following holds. Let \(\delta\in(0,1/10)\), $N\geq 1$, $f:\mathbb{Z}\rightarrow\mathbb{C}$ be a 1-bounded function supported in $[N]$, and $\phi_{1},\ldots,\phi_{s}:\mathbb{Z}^{s-1}\rightarrow\mathbb{T}$ be phase functions. If
	
	\begin{equation*}
		\underset{\boldsymbol{h} \in[ \pm N]^{s}}{\mathbf{E}}\left|\underset{x \in[N]}{\mathbf{E}} \Delta_{\boldsymbol{h}} f(x) e\left(x \sum_{1 \leq i \leq s} \phi_{i}\left(\widehat{\boldsymbol{h}}^{i}\right)\right)\right| \geq \delta,
	\end{equation*}
	then	
	\begin{equation*}
		\|f\|_{U^{s+1}}^{2^{s+1}} \geq \delta^{C} N^{s+2}.
	\end{equation*}	
\end{lemma}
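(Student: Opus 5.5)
The plan is induction on $s$, in the spirit of \cite[Lemma 6.4]{PP24}. At each stage we absorb one phase $\phi_i$ by a Cauchy--Schwarz step that eliminates the dependence of that phase on one coordinate of $\boldsymbol{h}$.

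\textbf{Reduction.} Since $\phi_i(\widehat{\boldsymbol{h}}^{i})$ does not depend on $h_i$, the term $\phi_s(\widehat{\boldsymbol{h}}^{s})$ is constant in $h_s$. Write $\Delta_{\boldsymbol{h}}f=\Delta_{h_s}(\Delta_{\boldsymbol{h}'}f)$ with $\boldsymbol{h}'=(h_1,\dots,h_{s-1})$. Fix $\boldsymbol{h}'$ and set $g=\Delta_{\boldsymbol{h}'}f$. Then the inner quantity is
\[
\Big|\mathbf{E}_x\, g(x)\overline{g(x+h_s)}\,e\big(x\theta(\boldsymbol{h}')\big)\,e\Big(x\sum_{i<s}\phi_i(\widehat{\boldsymbol{h}}^{i})\Big)\Big|,\qquad \theta(\boldsymbol{h}')=\phi_s(\boldsymbol{h}').
\]
Absorb $e(x\theta(\boldsymbol{h}'))$ into $g$, i.e.\ replace $g$ by $g\,e(\theta x)$. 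This changes $\overline{g(x+h_s)}$ only by the factor $e(-\theta h_s)$, which is constant in $x$ and disappears under the absolute value. The remaining phases $\phi_i$ with $i<s$ still depend on $h_s$, so they are not yet controlled.

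\textbf{Main step.} The key move is to eliminate a coordinate from the phases by Cauchy--Schwarz. I would instead organize the induction on the statement: ``if $\mathbf{E}_{\boldsymbol{h}}|\mathbf{E}_x \Delta_{\boldsymbol{h}} f\, e(x\sum_{i\le t}\phi_i(\widehat{\boldsymbol{h}}^i))|\ge\delta$ then the same holds with $t-1$ phases and $\delta^{C}$.'' The case $t=0$ gives $\mathbf{E}_{\boldsymbol{h}}|\mathbf{E}_x\Delta_{\boldsymbol{h}}f|\ge\delta$. Squaring and summing over $x$ yields $\|f\|_{U^{s+1}}^{2^{s+1}}\gtrsim\delta^2 N^{s+2}$, using that $[\pm N]$-averaged differences dominate the Gowers sum up to constants, since $f$ is supported in $[N]$.

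For the inductive step, remove $\phi_t$, which is independent of $h_t$. Fix all $h_j$ with $j\ne t$ and view the expression as a function of $h_t$:
\[
\mathbf{E}_x\, G(x)\overline{G(x+h_t)}\,e(x\Phi)\,e\big(x\psi(h_t)\big),
\]
where $G=\Delta_{\widehat{\boldsymbol{h}}^{t}}f$, $\Phi=\phi_t(\widehat{\boldsymbol{h}}^t)$ is constant in $h_t$, and $\psi(h_t)=\sum_{i<t}\phi_i$. Apply Lemma \ref{technical lemma 1} with $s=1$, taking a single function ($\mathcal{A}$ a singleton) and the phase $\Phi+\psi$ in the variable $h_t$. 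Note that $F=G$ here and the lemma's ``$\Delta_h F$'' corresponds to $G\overline{G(\cdot+h_t)}$. Popularity over the fixed coordinates is handled by restricting to the set of $\widehat{\boldsymbol{h}}^t$ where the average is $\ge\delta/2$. The output replaces $h_t$ by $h_t^0-h_t^1$ and the phase by $\tilde\phi=(\Phi+\psi(h^0_t))-(\Phi+\psi(h^1_t))$, so $\Phi$ cancels. After the change of variables $h_t=h_t^0-h_t^1$, the new phase $\psi(h_t^0)-\psi(h_t^1)$ is a sum of $t-1$ terms, each independent of one coordinate among $h_1,\dots,h_{t-1}$ and with the extra parameter $h_t^1$ adjoined. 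Pigeonholing in $h_t^1$ (at a $\delta^{C}$ cost) brings us back to the form with $t-1$ phases.

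\textbf{Expected obstacle.} The main bookkeeping difficulty is the last step. After substituting $h_t=h_t^0-h_t^1$ and fixing $h_t^1$, the range of $h_t$ is shifted, so it is no longer $[\pm N]$. One must also check that each $\phi_i(\widehat{\boldsymbol{h}}^i)$ with $i<t$, which now depends on $h_t^0=h_t+h_t^1$, remains independent of $h_i$, and it does. The range issue is handled by the support of $f$ in $[N]$, which allows enlarging to $[\pm 2N]$ with constant loss. I expect this careful tracking of ranges and popular sets, rather than any conceptual point, to be the main work.
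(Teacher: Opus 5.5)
Your main argument is correct and is the standard way to prove this; the paper itself gives no proof and only cites \cite[Lemma 6.4]{PP24}. The one-variable, single-function case of Lemma~\ref{technical lemma 1} is a single Cauchy--Schwarz in $x$ that doubles $h_t$ and cancels $\phi_t$, pigeonholing $h_t^1$ and using the support of $f$ restore the hypothesis with one fewer phase, and the final Cauchy--Schwarz gives the $U^{s+1}$ bound; in fact one application of Lemma~\ref{technical lemma 1} with $\mathcal{H}=[\pm N]^s$ performs all $s$ eliminations at once, since the alternating sum over $\omega_i$ annihilates each $\phi_i(\widehat{\boldsymbol{h}}^{i})$ and hence $\tilde{\phi}\equiv 0$.

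You should, however, delete the ``Reduction'' paragraph, which is false: $\Delta_{h_s}\bigl(g\,e(\theta\,\cdot)\bigr)=e(-\theta h_s)\,\Delta_{h_s}g$, so twisting $g$ by $e(\theta\,\cdot)$ does not produce the factor $e(\theta x)$, and this linear phase genuinely requires the Cauchy--Schwarz step.
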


\begin{lemma}\label{weyl}
	\textup{(\cite[Lemma 1.1.16]{Tao12})}
	Given $s\geq 1$, there exists a constant  $C=C(s)>0$  such that the following holds. Let  $\delta \in(0,1 / 10)$, $N\geq 1$, and $ \alpha_{1},\ldots,\alpha_{s}\in\mathbb{T}$.
	If $N \geq \delta^{-C}$ and
	\begin{equation*}
		\left|\underset{n \in[N]}{\mathbf{E}} \left(\sum_{1 \leq i \leq s} \alpha_{i} n^{i}\right)\right| \geq \delta,
	\end{equation*}
	then there exists $1\leq q\leq \delta^{-C}$ such that, for any $1\leq i\leq s$, $\|q\alpha_{i}\|\leq\delta^{-C}N^{-i}$.
\end{lemma}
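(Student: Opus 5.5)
This is the classical Weyl inequality, with the hypothesis read as $\bigl|\mathbf{E}_{n\in[N]}\, e\bigl(\sum_{1\leq i\leq s}\alpha_i n^i\bigr)\bigr|\geq\delta$. I would prove it by induction on the degree $s$, using van der Corput differencing. Throughout, all losses will be polynomial in $\delta$.

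\emph{Base case $s=1$.} Summing the geometric series gives $|\mathbf{E}_{n\in[N]} e(\alpha_1 n)|\leq \min\bigl(1,(2N\|\alpha_1\|)^{-1}\bigr)$. Hence the hypothesis forces $\|\alpha_1\|\leq (2\delta N)^{-1}$, and we may take $q=1$.

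\emph{Inductive step: the top coefficient.} Suppose the statement holds for degree $s-1$, and write $P(n)=\sum_{i\leq s}\alpha_i n^i$.
\begin{enumerate}[(1)]
\item Apply van der Corput's inequality (for instance the variant stated before Lemma \ref{dual-difference interchange}) with $H\asymp\delta^2N$. This gives $\mathbf{E}_{|h|\leq H}\bigl|\mathbf{E}_{n\in[N]}e(P(n+h)-P(n))\bigr|\gtrsim\delta^2$, up to negligible boundary terms.
\item For each $h$, the polynomial $P(n+h)-P(n)$ has degree $s-1$ in $n$, with leading coefficient $sh\alpha_s$. By the popularity principle there is a set of $h$ of size $\gtrsim\delta^2 H$ for which the inner average is $\gtrsim\delta^2$.
\item For each such $h$, the inductive hypothesis gives $q_h\leq\delta^{-C}$ with $\|q_h s h\alpha_s\|\leq\delta^{-C}N^{-(s-1)}$. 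Pigeonholing the value of $q_h$ produces a single $q_0$ that works for $\gtrsim\delta^{C}N$ values of $h\in[\pm N]$.
\item Apply the standard ``many small multiples'' lemma (Vinogradov's lemma; cf.\ \cite[Lemma 1.1.14]{Tao12}) to $\beta=q_0 s\alpha_s$. It states that if $\|h\beta\|\leq\varepsilon$ for at least $\eta N$ values of $h\in[N]$, with $\varepsilon\leq\eta/C$, then there is $q'\leq\eta^{-O(1)}$ with $\|q'\beta\|\lesssim\eta^{-O(1)}\varepsilon/N$. This yields $Q\leq\delta^{-C}$ with $\|Q\alpha_s\|\leq\delta^{-C}N^{-s}$.
\end{enumerate}

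\emph{Lower coefficients.} Write $Q\alpha_s=a+\theta$ with $a\in\mathbb{Z}$ and $|\theta|\leq\delta^{-C}N^{-s}$. Partition $[N]$ into pieces, each consisting of a residue class $b \bmod Q$ intersected with an interval of length $\delta^{C'}N$, where $C'$ is chosen much larger than $C$. On such a piece:
\begin{itemize}
\item $e(\alpha_s n^s)$ equals $e(a n^s/Q)$ times $e(\theta n^s/Q)$. The first factor depends only on $b$, so it is constant.
\item The second factor varies by only $O(\delta^{C'-C})$, because the derivative $\theta s n^{s-1}/Q$ times the interval length is $\lesssim \delta^{-C}N^{-1}\cdot\delta^{C'}N=\delta^{C'-C}$.
\end{itemize}
By the pigeonhole principle, some piece then carries $\bigl|\mathbf{E}\, e(\sum_{i<s}\alpha_i n^i)\bigr|\gtrsim\delta$. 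Parametrize that piece as $n=n_0+Qm$ with $m\in[M]$ and $M\asymp\delta^{C'}N/Q$. Then $\sum_{i<s}\alpha_i(n_0+Qm)^i$ is a polynomial in $m$ of degree $s-1$, and the induction applies to it. Its coefficients are triangular combinations of $Q^j\alpha_j$ and the $\alpha_i$ with $i>j$, with binomial weights involving $n_0$.

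The main obstacle is this last step: transferring the Diophantine information from these $m$-coefficients back to the original $\alpha_i$. I would handle it by back-substitution, processing $i=s-1$, then $s-2$, and so on down to $1$. At each stage, the already-controlled higher coefficients (after multiplying by the accumulated modulus) contribute only $\delta^{-O(1)}N^{-i}$-small errors, since $|n_0|\leq N$. All moduli are multiplied together, which keeps the final $q\leq\delta^{-C}$.

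An alternative that avoids this bookkeeping is to induct on $i$ from $s$ downward. One repeats the differencing step on the polynomial twisted by the already-approximated top part, exactly as in Tao's treatment of \cite[Lemma 1.1.16]{Tao12}. I would fall back on that if the triangular back-substitution becomes unwieldy.
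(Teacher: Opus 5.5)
Your proposal is correct, and it is essentially the standard proof behind the cited \cite[Lemma 1.1.16]{Tao12}; the paper gives only the citation, and you rightly read the hypothesis with the missing $e(\cdot)$. The route is the same: van der Corput, the degree-$(s-1)$ case and Vinogradov's lemma give $\|Q\alpha_s\|\le\delta^{-C}N^{-s}$; then you pigeonhole onto a progression of common difference $Q$ on which $e(\alpha_s n^s)$ is nearly constant, and induct on the remaining lower-degree part. Your triangular back-substitution does close, with final modulus $q_1Q^{s-1}$ (where $q_1$ is the modulus the induction produces on that progression), since $\|k\beta\|\le|k|\,\|\beta\|$ for $k\in\mathbb{Z}$ and $|n_0|\le N$, so the fallback route is not needed.
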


We split the proof of the case $n=1$ of Theorem \ref{Lambda_{N}-inverse with phase} into the following two propositions.
\begin{proposition}\label{remove dependencies}
	Given \(k\geq 1\) and a strictly increasing \((k+1)\)-tuple \((m_{0},\ldots,m_{k})\in\mathbb{N}^{k+1}\), there exists a constant \(C=C(k,m_{0},\ldots,m_{k})>0\)  such that the following holds. Let \(\delta\in(0,1/10)\), \(N\ge 1\), \(\alpha_1,\dots,\alpha_k:\mathbb{Z}\to\mathbb{T}\) be phase functions, and  \(f:\mathbb{Z}\to\mathbb{C}\) be  \(1\)-bounded function supported in \([2N^{m_0}]\).	If \(N \geq \delta^{-C}\) and
	\begin{equation*}
		\underset{x\in \left[N^{m_{0}}\right]}{\mathbf{E}}\Bigg|\underset{r \in[N]}{\mathbf{E}} f\left(x+r^{m_{0}}\right) e\Bigg(\sum_{1 \leq j \leq k} \alpha_{j}(x) r^{m_{j}}\Bigg)\Bigg| \geq \delta,
	\end{equation*}
	then there exist $\beta_{0},\ldots,\beta_{k}\in\mathbb{T}$, which are independent of $x$, such that
	\begin{equation*}
		\underset{x \in\left[N^{m_{0}}\right]}{\mathbf{E}}\Bigg|\underset{r \in[N]}{\mathbf{E}} f\left(x+r^{m_{0}}\right) e\Bigg(\sum_{1 \leq j \leq k} \beta_{j} r^{m_{j}}\Bigg)\Bigg| \geq \delta^{C}.
	\end{equation*}
\end{proposition}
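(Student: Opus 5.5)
Write $\phi_x(r):=\sum_{1\le j\le k}\alpha_j(x)r^{m_j}$. The plan is to remove the $x$-dependence of the phase one exponent at a time, starting with the top-degree coefficient $\alpha_k(x)$ and inducting downward in $k$. Each step will use a Cauchy--Schwarz/van der Corput argument in $r$ that turns the problem into a Weyl-type exponential sum, and then Lemma \ref{weyl} will show that the relevant coefficient is essentially a rational with small denominator plus a small real error, uniformly over a dense set of $x$.

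\textbf{Setup.} By the popularity principle there is a set $X\subset[N^{m_0}]$ of density $\gtrsim\delta$ on which $|\mathbf{E}_{r}f(x+r^{m_0})e(\phi_x(r))|\ge\delta/2$. I would square and expand in $r,r'$, which gives
\[
\underset{x\in X}{\mathbf{E}}\;\underset{r,r'\in[N]}{\mathbf{E}} f(x+r^{m_0})\overline{f(x+r'^{m_0})}\,e\bigl(\phi_x(r)-\phi_x(r')\bigr)\gtrsim\delta^{3}.
\]
The function $f$ enters only through the arguments $x+r^{m_0}$. So I would substitute $y=x+r^{m_0}$, or more simply average over shifts $x\mapsto x+t^{m_0}$-type reparametrisations, and apply Cauchy--Schwarz in $x$ to eliminate $f$. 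The expected output is a lower bound for a mean over $x$ (or over pairs $x,x'$ with $x-x'$ structured) of exponential sums in $r$ whose phases are polynomials of the form $\sum_j(\alpha_j(x)-\alpha_j(x'))r^{m_j}$, possibly after some van der Corput differencing in $r$. Because $m_0<m_1<\dots<m_k$, the coefficient of the top monomial $r^{m_k}$ survives differencing intact. Lemma \ref{weyl} then produces $q\le\delta^{-C}$ with $\|q(\alpha_k(x)-\alpha_k(x'))\|\le\delta^{-C}N^{-m_k}$ for many pairs.

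\textbf{Freezing the top coefficient.} Pigeonholing over $q$ and fixing a popular $x'$, I would obtain a dense set of $x$ on which $\alpha_k(x)\in\beta_k+\frac{1}{q}\mathbb{Z}+O(\delta^{-C}N^{-m_k})$ for a single $\beta_k$. Next I would split $r$ into residue classes modulo $q$ and into short intervals of length $\delta^{C}N$. On each such piece the rational part $a(x)r^{m_k}/q$ is constant, and the real error $\theta(x)r^{m_k}$ is nearly constant by partial summation. Pigeonholing the residue class and the interval then replaces $\alpha_k(x)$ by $\beta_k$ at the cost of a polynomial loss in $\delta$. The remaining problem has the same shape but with $r$ restricted to a progression $qr+b$. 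After expanding $(qr+b)^{m_j}$, the lower-degree phases are still polynomials in $r$ with $x$-dependent coefficients, now of degree $<m_k$. The $r^{m_0}$ term from the expansion can be absorbed by regarding $f(x+(qr+b)^{m_0})$ as a function of a rescaled variable. I would therefore state a slightly more general proposition, allowing $r$ in an arithmetic progression and allowing general polynomial phases of degree between $m_0$ and $m_k$, and induct on the number of $x$-dependent coefficients.

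\textbf{Main obstacle.} The hard step is the Cauchy--Schwarz elimination of $f$. Since $f$ is evaluated at $x+r^{m_0}$, which depends on $r$ nonlinearly, the standard trick is to difference in $x$ along $x\mapsto x+h$ and use the inner sum over $r$ with $r^{m_0}$ in place of a linear shift. My first attempt would be to treat the inner average, for fixed $x$, as a correlation of $f(x+\cdot)$ with the weight $\sum_r\mathbf{I}(r^{m_0}=u)e(\phi_x(r))$. I would then apply Cauchy--Schwarz in $u$ and $x$ jointly, so that $f$ cancels and a quadruple sum over $r_1,r_2,r_3,r_4$ with $r_1^{m_0}-r_2^{m_0}=r_3^{m_0}-r_4^{m_0}$ remains. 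Handling this counting constraint requires the degree gap $m_0<m_1$: fixing the near-diagonal solutions $r_1=r_2$ leaves a genuine Weyl sum in the remaining variable. If this route fails, the fallback is to view the hypothesis as a bound on the $L^1$-average of a dual-type function and appeal to PET-style estimates as in \cite{PP24}.
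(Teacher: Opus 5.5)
\textbf{The gap.} The step you flag as the main obstacle, eliminating $f$, is the whole difficulty, and the mechanism you propose for it does not work. Remove the absolute values with a $1$-bounded $\psi$, let $A$ be the popular set (your $X$), and apply Cauchy--Schwarz in $y=x+r^{m_0}$ so that $f$ disappears. What survives is a sum of $e(\phi_x(r)-\phi_{x'}(r'))$ over $x-x'=r'^{m_0}-r^{m_0}$. Here the phase functions sit at \emph{different} base points and are paired with \emph{different} values of $r$. Since the $\alpha_j$ are arbitrary functions, no exponential sum in a single variable $r$ with coefficients $\alpha_j(x)-\alpha_j(x')$ ever appears, and Lemma \ref{weyl} has nothing to act on. In fact this expression is just $\|F\|_2^2$ for the dual function $F(y)=\mathbf{E}_{r\in[N]}(\psi\chi_A)(y-r^{m_0})\,e\bigl(\sum_j\alpha_j(y-r^{m_0})r^{m_j}\bigr)$. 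What you actually need is that $F$ correlates with a \emph{linear} phase, and the paper needs substantially more than Cauchy--Schwarz to get that. Your quadruple-sum variant does not help either. Its solutions with $r_1=r_2$ force $r_3=r_4$ and carry phase identically $0$, so they are the trivial diagonal contribution rather than a Weyl sum. Off the diagonal, the phases are again attached to different base points. Also, van der Corput differencing in $r$ does not preserve the $r^{m_k}$ coefficient; it moves to degree $m_k-1$ as $m_kh\alpha_k(x)$. So the input to your ``freezing'' step is never established.

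\textbf{What the paper does instead.} After stashing $f$ into $F$, PET and concatenation give $\|F\|_{U^s}$ large, and a degree-lowering loop then brings $s$ down to $2$. At each stage:
\begin{enumerate}[(1)]
\item the $U^2$ inverse theorem applied to $\Delta_{\boldsymbol h}F$ gives frequencies $\phi(\boldsymbol h)$;
\item Lemma \ref{technical lemma 1} and the substitution $x=y-r^{m_0}$ turn this into a lower bound for $\mathbf{E}_{x\in A}|\mathbf{E}_r e(\tilde\phi(\boldsymbol{h^0},\boldsymbol{h^1})r^{m_0}+\sum_j\partial_{\boldsymbol{h^0}-\boldsymbol{h^1}}\alpha_j(x)r^{m_j})|$;
\item Lemma \ref{weyl} applied to the $r^{m_0}$-coefficient, together with a discretization of $\phi$, shows that $\tilde\phi(\cdot,\boldsymbol{h^1})$ takes few values;
\item hence $\phi$ has low rank on a dense set, and Lemma \ref{technical lemma 2} gives $U^{s-1}$ control.
\end{enumerate}
Once $\|F\|_{U^2}$ is large, the linear phase $e(\beta y)$ factors as $e(\beta x)e(\beta r^{m_0})$ under $y=x+r^{m_0}$. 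This yields $\mathbf{E}_{x\in A}|\mathbf{E}_r e(\beta r^{m_0}+\sum_j\alpha_j(x)r^{m_j})|\ge\delta^C$, a genuine Weyl sum with all coefficients at the same base point. Lemma \ref{weyl} then makes every $\alpha_j(x)$ major arc simultaneously, so no induction on $k$ is needed. Because the $\alpha_j$ were rounded at the outset to grids $\{t/T_j\}$ with $T_j\asymp\delta^{-1}kN^{m_j}$, they take at most $\delta^{-C}$ values there. They are therefore \emph{exactly} constant on a dense $A'\subset A$, and restricting the popular inequality to $A'$ gives the conclusion with $r$ still ranging over all of $[N]$.

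Your fallback (``PET-style estimates'') supplies only the $U^s$ bound and omits this degree-lowering step, which is the heart of the proof. Separately, your freezing via residue classes and short intervals would leave $r$ confined to a subprogression, which is not the stated conclusion.
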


\begin{proof}
	All constants $C$ below depend only on $k,m_{0},\ldots,m_{k}$ and may change from line to line. At the cost of at most $\delta/2$, we may require that, for any $1\leq j\leq k$ and any $x\in [N]$, $\alpha_{j}(x)$ takes values in $\left\{t/T_{i}:0\leq t<T_{j}\right\}$ for some $T_{j}=\lceil2\delta^{-1} k N^{m_{j}}\rceil$.

	By the popularity principle, for all $x\in A$ for some $A\subset [N^{m_{0}}]$ of density $\gtrsim \delta$,
	\begin{equation}\label{remove dependencies,eq1}
		\Bigg|\underset{r \in[N]}{\mathbf{E}} f(x+r^{m_{0}}) e\Bigg(\sum_{1 \leq j \leq k} \alpha_{j}(x) r^{m_{j}}\Bigg)\Bigg| \gtrsim \delta.
	\end{equation}
	Thus, for some 1-bounded factor $\psi$, we have
	\begin{equation*}
		\underset{x\in[N^{m_{0}}],r\in[N]}{\mathbf{E}}(\psi\chi_{A})(x) f(x+r^{m_{0}}) e\Bigg(\sum_{1 \leq j \leq k} \alpha_{j}(x) r^{m_{j}}\Bigg) \gtrsim \delta^{2}.
	\end{equation*}
	By using standard PET induction and concentration results, we deduce from the above that the Gowers norm of $f$ of some order $s=s(k,m_{0},\ldots,m_{k})\geq 2$ is large. Then, by the stashing trick for $f$, we have
	\begin{equation*}
		\|F\|_{U^{s}}^{2^{s}}\geq \delta^{C}N^{m_0(s+1)},
	\end{equation*}
	where
	\begin{equation*}
		F(x):=\underset{r\in[N]}{\mathbf{E}}(\psi\chi_{A})(x-r^{m_{0}})e\Bigg(\sum_{1 \leq j \leq k} \alpha_{j}(x-r^{m_{0}}) r^{m_{j}}\Bigg).
	\end{equation*}
	
	We claim that $\|F\|_{U^{s}}^{2^{s}}\geq \delta^{C}N^{m_{0}(s+1)}$ implies $\|F\|_{U^{s-1}}^{2^{s-1}}\geq \delta^{C}N^{m_{0}s}$.
	Indeed, by the popularity principle and Lemma \ref{U^2 inverse}, for all $\boldsymbol{h}$ in some $\mathcal{H}\subset[2N^{m_{0}}]^{s-2}$ of density $\geq \delta^{C}$, there is $\phi=\phi(\boldsymbol{h})\in\mathbb{T}$ such that
	\begin{equation}\label{remove dependencies,eq2}
		\left|\underset{x\in[2N^{m_{0}}]}{\mathbf{E}}\Delta_{\boldsymbol{h}}F(x)\cdot e(x\phi(\boldsymbol{h}))\right|\geq \delta^{C}.
	\end{equation}
	At the cost of at most $\delta^{C}$, we may further assume that for all $\boldsymbol{h}\in \mathcal{H}$, the values of  $\phi(\boldsymbol{h})$ belong to the set $\left\{t/T_{0}:0\leq t<T_{0}\right\}$ for some $T_{0}=\lceil\delta^{-C} N^{m_{0}}\rceil$. We extend $\phi$ to a function on $\mathbb{Z}^{s-2}$ by setting it to be zero elsewhere. 	By using Lemma \ref{technical lemma 1}, a change of variables, and the triangle inequality, we get
	\begin{equation*}
		\underset{\boldsymbol{h^{0}}, \boldsymbol{h^{1}} \in \mathcal{H},x\in A}{\mathbf{E}}\left|\underset{r \in [N] }{\mathbf{E}} e\Bigg( \tilde{\phi}\left(\boldsymbol{h^{0}}, \boldsymbol{h^{1}}\right)r^{m_{0}} +\sum_{1 \leq j \leq k} \partial_{\boldsymbol{h^{0}}-\boldsymbol{h^{1}}}\alpha_{j}(x) r^{m_{j}}\Bigg)\right| \geq \delta^{C},
	\end{equation*}
	where
	\begin{equation*}
		\tilde{\phi}\left(\boldsymbol{h^{0}}, \boldsymbol{h^{1}}\right):=\sum_{\boldsymbol{\omega} \in\{0,1\}^{s-2}}(-1)^{|\boldsymbol{\omega}|} \phi\left(h_{1}^{\omega_{1}}, \ldots, h_{s-2}^{\omega_{s-2}}\right).
	\end{equation*}
	Hence, by the popularity principle and Lemma \ref{weyl}, for all $\boldsymbol{h^{0}}$ in some $\mathcal{H}^{\prime}\subset\mathcal{H}$ of density $\geq\delta^{C}$ and for some $\boldsymbol{h^{1}}\in\mathcal{H}$, there exists $1\leq q\leq \delta^{-C}$ such that
	\begin{equation*}
		\left\|q\tilde{\phi}\left(\boldsymbol{h^{0}}, \boldsymbol{h^{1}}\right)\right\|\leq \frac{\delta^{-C}}{N^{m_{0}}}\leq\frac{\delta^{-C}}{T_{0}}.
	\end{equation*}
	Since $\tilde{\phi}(\boldsymbol{h^{0}}, \boldsymbol{h^{1}})\in\mathbb{T}$ also takes values in $\left\{t/T_{0}:0\leq t<T_{0}\right\}$, we deduce from the above that the set $\{\tilde{\phi}(\boldsymbol{h^{0}}, \boldsymbol{h^{1}}): \boldsymbol{h^{0}}\in \mathcal{H}^{\prime}\}$ contains at most $\delta^{-C}$ different values. Hence $\tilde{\phi}(\cdot, \boldsymbol{h^{1}})$ is constant on some $\mathcal{H^{\prime\prime}}\subset \mathcal{H^{\prime}}$ of density $\geq\delta^{C}$. Consequently, $\phi$ has ``low rank'' on $\mathcal{H^{\prime\prime}}$. That is, there exist $\psi_{1},\ldots,\psi_{s-2}:\mathbb{Z}^{s-3}\rightarrow\mathbb{C}$ such that
	\begin{equation*}
		\phi(\boldsymbol{h})=\sum_{1\leq i\leq s-2}\psi_{i}\left(\widehat{\boldsymbol{h}}^{i}\right),\quad \boldsymbol{h}\in\mathcal{H^{\prime\prime}}.
	\end{equation*}
	The claim thus follows by summing both sides of \eqref{remove dependencies,eq2} over $\boldsymbol{h}\in\mathcal{H^{\prime\prime}}$  and applying Lemma \ref{technical lemma 2}.
		
	By iterating the claim proved above, we   obtain that $\|F\|_{U^{2}}^{4}$ is large. Furthermore, applying Lemma \ref{U^2 inverse}, a change of variables, and the triangle inequality yields
	\begin{equation*}
		\underset{x\in A}{\mathbf{E}}\left|\underset{r\in[N]}{\mathbf{E}}e\Bigg(\beta r^{m_{0}}+\sum_{1 \leq j \leq k} \alpha_{j}(x) r^{m_{j}}\Bigg)\right|\geq \delta^{C}
	\end{equation*}
	for some $\beta\in\mathbb{T}$. As before, we deduce that all $\alpha_{1},\ldots,\alpha_{k}$ are constant on some $A^{\prime}\subset A$ of density $\geq \delta^{C}$. The proof is then completed by summing both sides of \eqref{remove dependencies,eq1} over $x\in A^{\prime}$.
\end{proof}

To complete the proof of the case $n=1$ of Theorem \ref{Lambda_{N}-inverse with phase}, we need further information about the function $f$ beyond what is provided by Proposition \ref{remove dependencies}. This is given in the following proposition. While this result can be derived from Proposition \ref{remove dependencies} via standard Fourier methods (see, e.g., \cite[Lemma 5.2]{PPS24}), we present here an alternative proof that is closer in spirit to the approach taken in \cite[Theorem 6.57]{KMPW24}.

\begin{proposition}\label{standard Fourier method}
	Given $n\geq 1$ and a polynomial $P\in \mathbb{T}[r]$ that contains no term of degree $n$, there exists a constant $C=C(n,\deg (P))>0$ such that the following holds. 	
	Let $\delta\in (0,1/10)$, $N\geq 1$, and   $f:\mathbb{Z}\rightarrow\mathbb{C}$ be a 1-bounded function supported in $\left[2N^{n}\right]$. If $N\geq \delta^{-C}$ and
	\begin{equation}\label{standard Fourier method,eq1}
		\underset{x \in\left[N^{n}\right]}{\mathbf{E}}\left|\underset{r \in[N]}{\mathbf{E}} f\left(x+r^{n}\right) e(P(r))\right| \geq \delta,
	\end{equation}
	then there exists $1\leq q \leq \delta^{-C}$ such that for any $1\leq L \leq \delta^{C} N^{n}$,
	\begin{equation*}
		\left\|\mathbf{E}\left(\left.f \right\rvert \mathcal{B}_{(q L, q)}\right)\right\|_{2}^{2} \geq \delta^{C} N^{n}.
	\end{equation*}
\end{proposition}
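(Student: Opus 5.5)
We first remove the absolute value in \eqref{standard Fourier method,eq1} with a $1$-bounded weight $\psi(x)$ supported on $[N^{n}]$. This gives
\begin{equation*}
\Bigg|\underset{x\in[N^n],\,r\in[N]}{\mathbf{E}}\psi(x)f(x+r^n)e(P(r))\Bigg|\geq\delta .
\end{equation*}
Next we apply the stashing trick for $f$: with the dual function $F(y):=\mathbf{E}_{r\in[N]}\psi(y-r^n)e(P(r))$, Cauchy--Schwarz gives $\sum_y f(y)\overline{F(y)}\gtrsim \delta N^n$. It therefore suffices to show two things. First, $F$ is close in $\ell^2$, up to an error $\delta^{C}N^{n}$ in the inner product, to a function that is constant on the atoms of $\mathcal{B}_{(qL,q)}$ for every $L\le\delta^CN^n$. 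Second, the same correlation then passes to $\mathbf{E}(f|\mathcal{B}_{(qL,q)})$, via self-adjointness (Lemma \ref{property of conditional expectation}\eqref{property of conditional expectation,(i)}) and Cauchy--Schwarz. The goal is thus to show $\langle f,\mathbf{E}(F|\mathcal{B}_{(qL,q)})\rangle\gtrsim\delta^{C}N^{n}$, which yields $\|\mathbf{E}(f|\mathcal{B}_{(qL,q)})\|_2^2\geq\delta^CN^n$ because $\|\mathbf{E}(F|\cdot)\|_2^2\lesssim N^n$.

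To obtain this structure, we first establish Gowers control. PET induction together with concatenation, as in Proposition \ref{remove dependencies}, gives $\|F\|_{U^{s}}^{2^s}\geq\delta^CN^{n(s+1)}$ for some $s$. We then lower the degree exactly as in the claim inside the proof of Proposition \ref{remove dependencies}. Each $\Delta_{\boldsymbol h}F$ correlates with $e(x\phi(\boldsymbol h))$ by Lemma \ref{U^2 inverse}. We discretize $\phi$ and apply Lemma \ref{technical lemma 1}; after changing variables $x\mapsto x+r^n$ the $\psi$ factors drop out, and we are left with an exponential sum $\mathbf{E}_r e(\tilde\phi\, r^n+\partial P(r))$. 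Here $\partial P(r)$ is the differenced phase, and it still has no $r^n$ term. Weyl's inequality (Lemma \ref{weyl}) then makes $\tilde\phi$ rational-major-arc, so $\phi$ has low rank, and Lemma \ref{technical lemma 2} lowers $s$. Iterating down to $s=2$, Lemma \ref{U^2 inverse} produces $\beta$ with $|\sum_y F(y)e(\beta y)|\geq\delta^CN^n$. Unfolding $F$ gives $|\mathbf{E}_{x,r}\psi(x)e(\beta x+\beta r^n+P(r))|\geq\delta^C$, hence $|\mathbf{E}_r e(\beta r^n+P(r))|\geq\delta^C$. Since $P$ has no degree-$n$ term, Weyl yields $q\le\delta^{-C}$ with $\|q\beta\|\le\delta^{-C}N^{-n}$. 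This is the key use of the hypothesis on $P$: the coefficient of $r^n$ is exactly $\beta$.

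This tells us only that $F$ has one major-arc Fourier coefficient; we need more, namely that the correlation of $f$ lives on major arcs. The cleanest route is to run the argument on $f$ directly. Since $\sum_y f\overline F\gtrsim\delta N^n$ and $F=\psi*\nu$ with $\nu$ the pushforward of the weight $e(P(r))$ under $r\mapsto r^n$, Plancherel gives $\int|\hat f||\hat\psi||\hat\nu|\gtrsim\delta N^n$. Weyl applied to $\hat\nu(\theta)=\mathbf{E}_r e(P(r)-\theta r^n)$ shows $|\hat\nu(\theta)|\le\delta^{C'}$ unless $\theta\in\mathfrak M$, where
\begin{equation*}
\mathfrak M:=\{\theta:\ \|q\theta\|\le\delta^{-C}N^{-n}\ \text{for some}\ q\le\delta^{-C}\}.
\end{equation*}
Removing the minor arcs via Cauchy--Schwarz and Parseval therefore gives $\int_{\mathfrak M}|\hat f|^2\gtrsim\delta^{C}N^n$. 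Taking $q$ to be the lcm of all denominators, which is at most $\exp$-free only if we instead pigeonhole a single $q\le\delta^{-C}$ carrying a $\delta^{C}$ share of the mass, we obtain $\int_{\|q\theta\|\le\delta^{-C}N^{-n}}|\hat f|^2\gtrsim\delta^CN^n$. Finally, $\mathbf{E}(\cdot|\mathcal{B}_{(qL,q)})$ behaves like averaging along $q\mathbb{Z}$ over length-$L$ blocks. For $L\le\delta^{C}N^n$ this multiplier is $\approx1$ on such $\theta$, up to block-boundary errors controlled by Lemma \ref{property of simple partition}. This gives the claimed $\ell^2$ lower bound.

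The main obstacle is this last transfer step. Conditional expectation onto $\mathcal{B}_{(qL,q)}$ is not a Fourier multiplier, so passing from Fourier mass near $a/q$ to $\|\mathbf{E}(f|\mathcal{B}_{(qL,q)})\|_2$ needs care. The plan is to compare with smooth averaging along $q\mathbb{Z}$ at scale $L$ (a Fejér-type kernel), and then use Lemma \ref{property of simple partition}\eqref{property of simple partition,(iii)} together with positivity to replace it with the block average. The uniformity in $L$ is automatic here, since the major-arc width $\delta^{-C}N^{-n}$ is independent of $L$.
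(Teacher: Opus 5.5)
Your core argument matches the paper's. You express the correlation as $\int\hat f\,\hat\psi\,\hat\nu$ and use Lemma \ref{weyl} to confine $\{|\hat\nu|\ge\delta/4\}$ to the major arcs; this is where the absence of an $r^{n}$ term in $P$ enters. You then discard the minor arcs by Cauchy--Schwarz and Parseval. The stashing step and the whole PET/degree-lowering paragraph are superfluous: as you noticed, a large Fourier coefficient of the dual function says nothing about $f$. After pigeonholing in $q$ you correctly reach $\int_{\|q\theta\|\le\eta}|\hat f(\theta)|^{2}\,d\theta\ge\delta^{C}N^{n}$ with $\eta=\delta^{-C}N^{-n}$.

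The gap is the final transfer, which you flag but do not close. Let $\mu$ be uniform on $\{0,q,\dots,q(L-1)\}$. There is an exact identity $\mathbf{E}_{0\le t<L}\|\mathbf{E}(f(\cdot+qt)\,|\,\mathcal{B}_{(qL,q)})\|_{2}^{2}=\|f*\mu\|_{2}^{2}$, and the right side is $\ge\frac14\int_{\|q\theta\|\le\eta}|\hat f|^{2}$ once $L\eta$ is small. So positivity does give a good shift $t$. But $t$ can be as large as $L-1$. Lemma \ref{property of simple partition}(iii) relates $\|\mathbf{E}(f(\cdot+qt)|\mathcal{B}_{(qL,q)})\|_{2}^{2}$ to $\|\mathbf{E}(f|\mathcal{B}_{(qL,q)})\|_{2}^{2}$ only up to an error $O(tN^{n}/L)$. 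For such $t$ that error is of size $N^{n}$ and swamps the main term $\delta^{C}N^{n}$. So ``Fej\'er kernel at scale $L$ plus Lemma (iii)'' does not give the bound for $\mathcal{B}_{(qL,q)}$ itself.

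Either of two repairs works.
\begin{enumerate}[(a)]
\item Run the Fej\'er identity at scale $KL$ with $K\approx\delta^{-C}$, which only requires $KL\le\delta^{C}N^{n}$. Write $t=aL+b$ with $0\le b<L$. Remove $qb$ by Lemma \ref{property of simple partition}(iii) at cost $O(N^{n}/K)$. Refine $\mathcal{B}_{(qKL,q)}$ to $\mathcal{B}_{(qL,q)}$ using Lemma \ref{property of conditional expectation}(ii). Finally remove $qaL$ by periodicity, Lemma \ref{property of simple partition}(ii). This is exactly the translation-removal step at the end of the proof of Theorem \ref{Lambda_{q,M}-inverse}.
\item Do what the paper does. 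Since $\{\|q\theta\|\le\eta\}$ has measure $2\eta$, your $L^{2}$ mass already yields a single $\xi_{0}$ with $\|q\xi_{0}\|\le\eta$ and $|\hat f(\xi_{0})|\ge\delta^{C}N^{n}$. The paper gets this directly by bounding the major-arc contribution by measure times $\sup|\hat f|$, skipping Cauchy--Schwarz. Conditional expectation then only has to act on one character, and $\|e(\xi_{0}\cdot)-\mathbf{E}(e(\xi_{0}\cdot)|\mathcal{B}_{(qL,q)})\|_{\infty}\lesssim L\|q\xi_{0}\|$. Self-adjointness gives $\|\mathbf{E}(f|\mathcal{B}_{(qL,q)})\|_{1}\ge|\langle\mathbf{E}(f|\mathcal{B}_{(qL,q)}),e(\xi_{0}\cdot)\rangle|\gtrsim\delta^{C}N^{n}$ uniformly in $L\le\delta^{C}N^{n}$. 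Cauchy--Schwarz on a support of size $O(N^{n})$ then gives the $\ell^{2}$ bound.
\end{enumerate}
Route (b) is shorter because it never needs to treat conditional expectation, which is not a Fourier multiplier, as one.
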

\begin{proof}
	Denote
	\begin{equation*}
		F_N:=\left\{\xi\in\mathbb{T}:\left|\underset{r \in [N]}{\mathbf{E}} e\left(P(r)+\xi r^{n}\right) \right|\geq \delta/4\right\}.
	\end{equation*}
	If $N$ is large, then, by Lemma \ref{weyl}, we have
	\begin{equation}\label{standard Fourier method,eq2}
		F_N\subset \left\{\xi\in\mathbb{T}:\left\|q \xi \right\| \leq \delta^{-C}N^{-n}  \text{ for some } 1\leq q\leq \delta^{-C}\right\},
	\end{equation}
	where $C$ depends only on $n$ and $\deg(P)$. Especially, the measure of $F_N$ is at most $2\delta^{-2C}N^{-n}$.

 	Rewriting  \eqref{standard Fourier method,eq1} as
	\begin{equation*}
		\underset{x \in\left[N^{n}\right],r\in [N]}{\mathbf{E}} (\psi\chi_{\left[N^{n}\right]})(x) f\left(x+r^{n}\right) e(P(r)) \geq \delta
	\end{equation*}
	for some 1-bounded factor $\psi$, and applying the Fourier inversion, the triangle inequality, H\"{o}lder's inequality, Plancherel's identity, and the bound of $|F_N|$, we obtain
\begin{align*}
\delta N^{n}&\leq \int_{0}^{1}\left|\left(\psi\chi_{\left[N^{n}\right]}\right)^{\wedge}(-\xi)\right|\left|\hat{f}(\xi)\right| \left|\underset{r \in [N]}{\mathbf{E}} e\left(P(r)+\xi r^{n}\right)\right| \,\mathrm{d} \xi \\
	&\leq\frac{\delta}{4}\int_{F_N^c}\left|\left(\psi\chi_{\left[N^{n}\right]}\right)^{\wedge}(-\xi)\right|\left|\hat{f}(\xi)\right| \,\mathrm{d} \xi +\int_{F_N}\left|\left(\psi\chi_{\left[N^{n}\right]}\right)^{\wedge}(-\xi)\right|\left|\hat{f}(\xi)\right| \,\mathrm{d} \xi \\
			&\leq\frac{1}{2}\delta N^{n}+ 2\delta^{-2C} \sup_{\xi\in F_N}|\hat{f}(\xi)|.
\end{align*}
Thus, the set $F_N$ is nonempty, and for some $\xi_{0} \in F_N$,
	\begin{equation*}
		\left|\hat{f}(\xi_{0})\right|\geq \frac{1}{8}\delta^{2C+1}N^{n}.
	\end{equation*}
	By \eqref{standard Fourier method,eq2}, we have $\left\|q \xi_{0} \right\| \leq \delta^{-C}N^{-n}$ for some $1\leq q\leq \delta^{-C}$. Hence, for any $1\leq L\leq \delta^{10C+10}N^{n}$,
	\begin{align*}
			\left|\left(f-\mathbf{E}\left(\left.f\right\rvert\mathcal{B}_{(qL,q)}\right)\right)^{\wedge}(\xi_{0})\right|&=\left|\left\langle f, e(\xi_{0}\cdot)-\mathbf{E}(\left.e(\xi_{0}\cdot)\right\rvert \mathcal{B}_{(q L, q)})\right\rangle\right|\\
			&\leq 2N^{n}\left\|e(\xi_{0}\cdot)-\mathbf{E}(\left.e(\xi_{0}\cdot)\right\rvert \mathcal{B}_{(q L, q)})\right\|_{\infty}\\
			&\leq  \frac{1}{16}\delta^{2C+1}N^{n}.
	\end{align*}
	Therefore,
	\begin{equation*}	
		\frac{1}{16}\delta^{2C+1}N^{n}\leq \left|\mathbf{E}\left(\left.f\right\rvert\mathcal{B}_{(qL,q)}\right)^{\wedge}(\xi_{0})\right|\leq \left\|\mathbf{E}\left(\left.f\right\rvert\mathcal{B}_{(qL,q)}\right)\right\|_{1}.
	\end{equation*}
	The result then follows easily.
\end{proof}


\section{Further directions}\label{appendiB}

In this section we discuss possible generalizations to multidimensional polynomial progressions.

Recall that in the proof of our Szemer\'{e}di theorem, as stated right after Theorem \ref{Lambda_{N}-inverse}, we first use the inverse theorem (Theorem \ref{Lambda_{N}-inverse}) to derive, via an averaging process, an inverse theorem for the more general operator $\Lambda_{q, M, \boldsymbol{N};\boldsymbol{m}}$ (Theorem \ref{Lambda_{q,M}-inverse}), and then carry out an energy increment argument to handle $\Lambda_{q, M, \boldsymbol{N};\boldsymbol{m}}$ and complete the remaining steps. We note that Theorem $\ref{Lambda_{N}-inverse}$ is actually a special case of \cite[Theorem 4.14]{KMPWW24}, which already applies to general multidimensional polynomial progressions. However, the difficulty in extending our result to the general setting lies in the fact that the argument in Section \ref{section:strategy} heavily relies on homogeneity to obtain the inverse theorem for $\Lambda_{q,M,\boldsymbol{N};\boldsymbol{m}}$; this step does not yet extend to the general case.

We now illustrate a possible approach to the inhomogeneous problem by means of the following example
\begin{equation}\label{further direction,eq1}
	(x,y),(x+r^{2}+r,y),(x,y+r^{3}).
\end{equation}
A natural way  to deal with this inhomogeneous configuration is to first consider its homogeneous three‑dimensional counterpart
\begin{equation}\label{further direction,eq2}
	(x,y,z),(x+r,y+r^{2},z),(x,y,z+r^{3}).
\end{equation}
A simple projection argument, analogous to that used in the proof of Corollary \ref{popular difference,1-D}, would then allow us to return to the original inhomogeneous configuration \eqref{further direction,eq1}.

The study of \eqref{further direction,eq2} using the methods of this paper reduces to a problem similar to that in Proposition \ref{remove dependencies}. Specifically, suppose that
\begin{equation} \label{s7-1}
	\underset{x\in[N],y\in\left[N^{2}\right]}{\mathbf{E}}\left|\underset{r \in[N]}{\mathbf{E}} f\left(x+r,y+r^{2}\right) e\left(\alpha(x,y)r^{3}\right)\right| \geq \delta.
\end{equation}
Can we find a $\beta\in \mathbb{T}$, independent of $x$ and $y$, such that
\begin{equation}\label{further direction,eq3}
	\underset{x\in[N],y\in\left[N^{2}\right]}{\mathbf{E}}\left|\underset{r \in[N]}{\mathbf{E}} f\left(x+r,y+r^{2}\right) e\left(\beta r^{3}\right)\right|
\end{equation}
is also large?  If such a $\beta$ exists, one can either use the standard Fourier method to show that some Fourier coefficient $|\hat{f}(\xi,\eta)|$ must be large, or use the method in the proof of Proposition \ref{standard Fourier method} to show that
\begin{equation*}
	\left\|\mathbf{E}\left(\left.f \right\rvert \mathcal{B}_{(q L, q)} \otimes \mathcal{B}_{\left(q^{2} L^{2}, q^{2}\right)}\right)\right\|_{2}^{2}
\end{equation*}
is large for some $q$ and any small $L$; here $\mathcal{B}_{(q L, q)} \otimes \mathcal{B}_{\left(q^{2} L^{2}, q^{2}\right)}$ denotes a partition of $\mathbb{Z}^{2}$. With this in hand, the rest of the argument follows smoothly.

We believe that the question \eqref{further direction,eq3} should admit a positive answer. This belief is motivated by the following heuristic. Under the additional assumption that $|f|\equiv1$ on $[N]\times[N^{2}]$, we may write $f=e(\psi)$ and expand $\psi$ via a ``discrete Taylor series'' to obtain
\begin{equation}\label{further direction,eq4}
	\underset{x \in[N], y \in\left[N^{2}\right]}{\mathbf{E}}\Bigg|\underset{r \in[N]}{\mathbf{E}} e\Bigg(\sum_{i, j \geq 0} \gamma_{i, j}(x+r)^{i}\left(y+r^{2}\right)^{j}+\alpha(x, y) r^{3}\Bigg)\Bigg| \geq \delta.
\end{equation}
Terms of higher order should be eliminable by Lemma \ref{weyl}. One would then expect to obtain an estimate of the form
\begin{equation*}
	\left\|q\left(\gamma_{3,0}+\gamma_{1,1}+\alpha(x,y)\right)\right\|\lesssim_{\delta}N^{-3},
\end{equation*}
which would allow us to make $\alpha(x,y)$ constant after restricting $(x,y)$ to some suitable  smaller set.

However, the methods developed in this paper do not suffice to obtain results of the form \eqref{further direction,eq4} from the assumption \eqref{s7-1}. It seems that we always obtain ``bad'' expressions, for which no “reasonable” change of variable in $x$ is available. Thus, to address the question \eqref{further direction,eq3}, new ideas may be needed.


\subsection*{Acknowledgments}
Our thanks go to Prof. Ben Green, Prof. Sarah Peluse, and Prof. Sean Prendiville for answering our questions and clearing up our confusion. We would also like to thank James Leng and Xuezhi Chen for helpful communication.



\begin{thebibliography}{00}


\bibitem{BL96}
Bergelson, V., Leibman, A.,
Polynomial extensions of van der Waerden's and Szemer\'{e}di's theorems,
\textit{J. Amer. Math. Soc.} \textbf{9} (1996), no. 3, 725--753.


\bibitem{BS23}
Bloom, T. F.,  Sisask, O.,
An improvement to the Kelley--Meka bounds on three-term arithmetic progressions,
preprint, arXiv:2309.02353.

\bibitem{CG24}
Chen, X., Guo, J.,
A polynomial Roth theorem for corners in $\mathbb{R}^2$ and a related bilinear singular integral operator,
\textit{Math. Ann.} \textbf{390} (2024), no. 1, 255--301.

\bibitem{CM24}
Chen, X.,  Miao, C.,
Two-point polynomial patterns in subsets of positive density in $\mathbb{R}^n$,
\textit{Int. Math. Res. Not. IMRN} \textbf{2024}, no. 14, 10865--10879.


\bibitem{CDR21}
Christ, M., Durcik, P.,  Roos, J.,
Trilinear smoothing inequalities and a variant of the triangular Hilbert transform,
\textit{Adv. Math.} \textbf{390} (2021), Paper No. 107863, 60 pp.


\bibitem{FK78}
Furstenberg, H., Katznelson, Y.,
An ergodic Szemer\'{e}di theorem for commuting transformations,
\textit{J. Analyse Math.} \textbf{34} (1978), 275--291.



\bibitem{G98}
Gowers, W. T.,
A new proof of Szemer\'{e}di's theorem for arithmetic progressions of length four,
\textit{Geom. Funct. Anal. }\textbf{8} (1998), no. 3, 529--551.

\bibitem{G01}
Gowers, W. T.,
A new proof of Szemer\'{e}di's theorem,
\textit{Geom. Funct. Anal.} \textbf{11} (2001), no. 3, 465--588.



\bibitem{GT09}
Green, B., Tao, T.,
New bounds for Szemer\'{e}di’s theorem. II. A new bound
for $r_4(N)$,
\textit{Analytic number theory}, 180--204, Cambridge  University  Press,
Cambridge, 2009.

\bibitem{GT17}
Green, B., Tao, T.,
New bounds for Szemer\'{e}di’s theorem, III: a polylogarithmic bound for $r_4(N)$,
\textit{Mathematika} \textbf{63} (2017), no. 3, 944--1040.


\bibitem{HLY21}
Han, R., Lacey, M. T., Yang, F.,
A polynomial Roth theorem for corners in finite fields,
\textit{Mathematika} \textbf{67} (2021), no. 4, 885--896.

\bibitem{KM23}
Kelley, Z., Meka, R.,
Strong bounds for 3-progressions,
2023 IEEE 64th Annual Symposium on Foundations of Computer Science---FOCS 2023, 933--973.
IEEE Computer Society, Los Alamitos, CA, 2023.

\bibitem{KMPWW24}
Kosz, D., Mirek, M., Peluse S., Wan, R., Wright, J.,
The multilinear circle method and a question of Bergelson,
preprint, arXiv:2411.09478v3.


\bibitem{KMPW24}
Krause, B.,  Mirek, M., Peluse, S., Wright, J.,
Polynomial progressions in topological fields,
\textit{Forum Math. Sigma} \textbf{12} (2024), Paper No. e106, 51 pp.


\bibitem{KKL24 b}
Kravitz, N., Kuca, B.,  Leng, J.,
Quantitative concatenation for polynomial box norms,
\textit{Adv. Math.} \textbf{489} (2026), Paper No. 110820, 82 pp.


\bibitem{KKL24 a}
Kravitz, N., Kuca, B.,  Leng, J., Corners with polynomial side length, preprint, arXiv:2407.08637v2.


\bibitem{Kuc24}
Kuca, B.,
Multidimensional polynomial Szemer\'{e}di theorem in finite fields for polynomials of distinct degrees,
\textit{Israel J. Math.} \textbf{259} (2024), no. 2, 589--620.


\bibitem{Kuc24-b}
Kuca, B.,
Multidimensional polynomial patterns over finite fields: bounds, counting estimates and Gowers norm control,
\textit{Adv. Math.} \textbf{448} (2024), Paper No. 109700, 61 pp.



\bibitem{LSS24-1}
Leng, J., Sah, A.,  Sawhney, M.,
Improved bounds for five-term arithmetic progressions,
\textit{Math. Proc. Cambridge Philos. Soc.} \textbf{177} (2024), no. 3, 371--413.


\bibitem{LSS24-2}
Leng, J., Sah, A.,  Sawhney, M.,
Improved bounds for Szemer\'{e}di's theorem, preprint,
arXiv:2402.17995.

\bibitem{Pel19}
Peluse, S.,
On the polynomial Szemer\'{e}di theorem in finite fields,
\textit{Duke Math. J.} \textbf{168} (2019), no. 5, 749--774.

\bibitem{Pel20}
Peluse, S.,
Bounds for sets with no polynomial progressions,
\textit{Forum Math. Pi} \textbf{8} (2020), e16, 55 pp.

\bibitem{PP24}
Peluse, S., Prendiville, S.,
Quantitative bounds in the nonlinear Roth theorem,
\textit{Invent. Math. }
\textbf{238} (2024), no. 3, 865--903.

\bibitem{PP22}
Peluse, S., Prendiville, S.,
A polylogarithmic bound in the nonlinear Roth theorem,
\textit{Int. Math. Res. Not. IMRN} \textbf{2022}, no. 8, 5658--5684.



\bibitem{PPS24}
Peluse, S., Prendiville, S., Shao, X.,
Bounds in a popular multidimensional nonlinear Roth theorem,
\textit{J. Lond. Math. Soc. (2)} \textbf{110} (2024), no. 5, Paper No. e70019, 35 pp.


\bibitem{Pre17}
Prendiville, S.,
Quantitative bounds in the polynomial Szemer\'{e}di theorem: the homogeneous
case,
\textit{Discrete Anal.} 2017, Paper No. 5, 34 pp.


\bibitem{R54}
Roth, K. F.,
On certain sets of integers. II,
\textit{J. London Math. Soc.} \textbf{29} (1954), 20--26.


\bibitem{S75}
Szemer\'{e}di, E.,
On sets of integers containing no $k$ elements in arithmetic progression,
\textit{Acta Arith.} \textbf{27} (1975), 199--245.


\bibitem{SW25}
Shao, X., Wang, M.,
Quantitative bounds in a popular polynomial Szemer\'{e}di theorem, 
\textit{Proc. Roy. Soc. Edinburgh Sect. A}, Published online 2025:1--27.





\bibitem{Tao12}
Tao, T.,
Higher order Fourier analysis,
Grad. Stud. Math., 142
American Mathematical Society, Providence, RI, 2012. x+187 pp.





\end{thebibliography}
\end{document}